\documentclass[11pt,a4paper,reqno]{amsart}
\newcommand{\ez} {\hspace*{\fill} $\square$}
\usepackage{amsmath,amssymb,amsfonts,amsthm,mathtools}
\usepackage{enumerate}
\usepackage[colorlinks=true,citecolor=black,pagebackref=false]{hyperref}

\numberwithin{equation}{section}
\allowdisplaybreaks

\newcommand{\al}{\alpha}
\newcommand{\E}{{\mathcal E}}
\newcommand{\R}{\mathbb R}
\newcommand{\N}{\mathbb N}

\newcommand{\la}{\lambda}
\newcommand{\La}{\Lambda}
\newcommand{\A}{{\mathcal A}}
\newcommand{\Z}{{\mathbb Z}}

\newcommand{\Om}{\Omega}
\newcommand{\D}{{\mathcal D}}
\newcommand{\cC}{{\mathcal C}}

\makeatletter
\@namedef{subjclassname@2020}{\textup{2020} Mathematics Subject Classification}
\makeatother
\newtheorem{thm}{Theorem}[section]
\newtheorem{prop}[thm]{Proposition}
\newtheorem{lem}[thm]{Lemma}
\newtheorem{coro}[thm]{Corollary}
\newtheorem{defi}[thm]{Definition}
\newtheorem{rem}[thm]{Remark}
\newtheorem{conj}[thm]{Conjecture}
\newtheorem{ques}[thm]{Question}
\newtheorem{exam}[thm]{Example}

\title[]
{Spectral eigenvalue problem of Cantor measures and Artin's primitive root conjecture}

\author[X.-G. He]{Xing-Gang He}

 \address[Xing-Gang He]{School of Mathematics and Statistics, Hubei Key Laboratory of Mathematical Sciences, Central China Normal University, Wuhan 430079, P. R. China}
\email{xingganghe@163.com}

\author[Z.-Y. Wu]{Zhi-Yi Wu}
\address[Zhi-Yi Wu]{School of Mathematics and Information Science, Guangzhou University, Guangzhou, 510006, P.~R.~China}
\email{zhiyi\_wu2021@163.com}

\author[F.-L. Yin]{Feng-Li Yin}
\address[Feng-Li Yin]{School of Mathematics and Statistics, Zhoukou Normal University, Zhoukou 466001, P. R.
China}
\email{yinfengli05181@163.com}

\thanks{This work was supported by the National Natural Science Foundation of China 12371087,
12301105 and 12401112.}
\subjclass[2020]{Primary 28A80, 42C05; Secondary 11A07, 42A65}
\keywords{Cantor measure, Spectral eigenvalue, Artin's primitive root conjecture, Spectral
measure}

\begin{document}
\begin{abstract}
 The eigenvalue problem for a probability measure $\mu$ with compact support in $\R$ is whether there exist
 a countable set $\Lambda$ and a nonzero real $t\ne 1$ such that both $\Lambda$ and $t\Lambda$ are
 spectra of $\mu$, that is, the family
$$E_{a\Lambda}=\{e^{-2\pi i a\lambda x}:\lambda\in\Lambda\}$$ is an orthonormal base for $L^2(\mu)$ for $a=1, t$. The
eigenvalue problem  was discovered independently by Strichartz \cite{Str00},  {\L}aba and Wang
\cite{LW02} for the Cantor measures $\mu_{4,\{0,1\}}$ and $\mu_{6,\{0,1,2\}}$, respectively.
 In this paper, we investigate the spectral eigenvalue problem for the general spectral Cantor
 measure $\mu_{b,\mathcal{D}}$. This topic is naturally related to elementary number theory.
 Unexpectedly, however, our main results depend on the theory of integers,
especially Artin's primitive root conjecture. To some extent, our results suggest
that Artin's primitive root conjecture may hold and confirms some viewpoints implied by Minkowski
in \cite{Min57}.
\end{abstract}

\maketitle

\section{Introduction}
Let $\mu$ be a Borel probability measure with compact support in $\R$ and let $\La\subset \R$ be a
countable set.  The pair $(\mu, \La)$ is called a {\it spectral pair} if the family
\[E_\La=\left\{e^{-2\pi i \la x}: \la\in\La\right\}\]
is an orthonormal basis of $L^2(\mu)$. In this case the $\mu$ is called a {\it spectral measure} and
the $\La$ is called a {\it spectrum} of $\mu$. The simplest example of a spectral pair is $(L|_{[0,
1]}, \Z)$, where $L|_\Om$ denotes the Lebesgue measure restricting on the Borel set $\Om$ with
positive Lebesgue measure.  In general, most measures are not spectral. For example, {\L}aba
\cite{La01} proved that $L|_{I\cup J}$, where both $I$ and $J$ are closed intervals, is a spectral
measure if and only if $I\cup J$ can cover $\R$ by translations without overlaps, up to a Lebesgue
zero set. This topic is related to Fuglede conjecture. For relevant literature, see \cite{Fu74} and
the papers citing it. Spectral measure is closely connected to many physical systems, which can be
described via a Schr\"{o}dinger equation with an almost periodic potential \cite{BBM82}.

If $\mu$ is a spectral measure, then $\mu$ is a finite linear combination of Dirac measures with
equal distribution, or a Lebesgue measure restricting on a region $L|_\Om$ or singular continuous
measure $\mu$ (see \cite{HLL13,LW06}). It is surprising that there are huge differences between  the
analysis theory based on $L|_\Om$ and  those based on singular continuous measures $\mu$. Here we
study the following problem, which is impossible for $L|_\Om$,

\indent{\it Given a singular spectral pair $(\mu, \La)$, find all $t\in\R$ such that the $(\mu,
t\La)$ is also a spectral pair.}

This problem is termed the {\it spectral eigenvalue problem}, also the {\it scaling spectrum
problem}, of the spectral pair $(\mu, \La)$. The names stem from the fact that we are determining for
which numbers $t$ the scaled set $t\La$ remains a spectrum, which is equivalent to the exponential
system $E_{t\La}$ being both orthogonal and complete in $L^2(\mu)$.

Such a problem, which is both intriguing and interesting, was initiated by Strichartz \cite{Str00}
and by {\L}aba and Wang \cite{LW06}, for some Cantor measures. The general Cantor measures $\mu_{b,
\D}$ are decided by a number $b$ with $|b|>1$ and a finite digit set $\D$ in $\R$. The measure
$\mu_{b, \D}$, also called a {\it self-similar measure},  is a Borel probability measure with compact
support
\begin{eqnarray*}
  T(b, \D)=\left\{\sum_{k=1}^\infty d_kb^{-k}: \text{all $d_k\in\D$}\right\}:=\sum_{k=1}^\infty \D
  b^{-k},
\end{eqnarray*}
and is the unique one satisfying that
\begin{eqnarray*}
  \mu_{b, \D}(E)=\frac 1{\#\D}\sum_{d\in\D}\mu_{b, \D}(\phi_d^{-1}(E)),\qquad \text{for any Borel set
  $E\subset\R$,}
\end{eqnarray*}
 where $\# S$ is the cardinality of the set $S\subset \R$,  $\{\phi_d(x)=\frac 1b(x+d)\}_{d\in\D}$ is
 an iterated function system, see, e.g., \cite{Fa90}.

Since the pioneering work of Jorgenson and Pedersen \cite{JP98} in 1998, i.e., the fourth middle
Cantor measure $\mu_{4,\{0,2\}}$ is a spectral measure, there has been extensive research on the
spectrality and spectral structure of self-similar measures, see
\cite{Dai12,Dai16,DHL13,DHS09,DH16,DK18,FHW18,HTW19,LW02} and the references therein. Many strange
properties of spectral self-similar measures,  unlike those of spectral Lebesgue measures, are
discovered. For example, the spectra of some known spectral measures $\mu_{b, \D}$ with $\#\D<|b|$ up
to translations have the cardinality of the continuum (see, e.g., \cite{DHL13, DHS09}). It is
conjectured that this is true for any singular continuous spectral measure.

  The fundamental result on the topic in $\R$ was obtained by {\L}aba and Wang \cite{LW02}. To
  introduce their work we need the following notation:
 \begin{defi}
 Let $b\in \Z$ with $|b|>1$ and let $\D\subset \Z$ be a finite set. We say $(b, \D)$ is an {\it
 integrally admissible pair} if there exists $\cC$ with $0\in\cC\subset\Z$ and $\#\D=\#\cC$ such that
 the matrix
 \[H=\frac 1{\sqrt{\#\D}}\left[e^{-2\pi i\frac{cd}{b}}\right]_{d\in\D, c\in \cC}\]
 is unitary, i.e., $HH^\ast=I$. In this case, we call the $(b, \D, \cC)$ an {\it integral Hadamard
 triple}.
 \end{defi}

 \begin{thm}[\cite{LW02}]
 Let $(b, \D, \cC)$ be an integral Hadamard triple. Then the self-similar measure $\mu_{b, \D}$ is a
 spectral measure. Moreover, if $\gcd(\D)=1$ and $\cC\subset b[-1/2, 1/2)\cap \Z$, then $(\mu_{b,
 \D}, \La(b, \cC))$ is a spectral pair, where
 \begin{eqnarray*}
   \La(b, \cC)=\bigcup_{k=1}^\infty \La_k(b, \cC), \qquad \La_k(b, \cC)=\cC+b\cC+\cdots+b^{k-1}\cC.
 \end{eqnarray*}
 \end{thm}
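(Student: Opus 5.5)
We work with the Fourier transform of $\mu=\mu_{b,\D}$. Write $m_\D(\xi)=\frac1{\#\D}\sum_{d\in\D}e^{-2\pi i d\xi}$, so that
\[
\hat\mu(\xi)=\prod_{j=1}^\infty m_\D(b^{-j}\xi).
\]
We use the standard criterion: $E_\La$ is an orthonormal basis of $L^2(\mu)$ if and only if
\[
Q(\xi):=\sum_{\la\in\La}|\hat\mu(\xi+\la)|^2\equiv1 \quad\text{for all }\xi\in\R .
\]
Moreover, orthogonality of $E_\La$ is equivalent to $(\La-\La)\setminus\{0\}\subset \mathcal Z(\hat\mu)$, the zero set of $\hat\mu$.

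\textbf{Orthogonality.} The Hadamard condition says exactly that $m_\D\bigl((c-c')/b\bigr)=0$ for distinct $c,c'\in\cC$; in particular the elements of $\cC$ are distinct modulo $b$. Take $\la\ne\la'$ in $\La_k(b,\cC)$ and let $j$ be the first index where their digits differ. Then $\la-\la'=b^{j-1}(c_j-c_j')+b^j n$ with $n\in\Z$. Since $m_\D$ is $1$-periodic (as $\D\subset\Z$),
\[
m_\D\bigl(b^{-j}(\la-\la')\bigr)=m_\D\bigl((c_j-c_j')/b\bigr)=0,
\]
so $\hat\mu(\la-\la')=0$. Because $0\in\cC$, the sets $\La_k$ are nested, so this covers all of $\La$. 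The first claim of the theorem then follows from the second after a reduction. One may assume $0\in\D$ by translating $\D$; this only multiplies $\hat\mu$ by a unimodular factor. One may also assume $\gcd(\D)=1$ by dividing $\D$ by $g=\gcd(\D)$, which rescales $\mu$ and preserves the Hadamard property with $\cC$ replaced by $g\cC$ mod $b$ up to a suitable adjustment. The cleaner route is to replace $\cC$ by its representatives modulo $b$ in $b[-1/2,1/2)$; this does not change the Hadamard matrix.

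\textbf{Completeness.} Unitarity of $H$ gives
\[
\sum_{c\in\cC}|m_\D(b^{-1}(\xi+c))|^2=1 .
\]
Iterating this identity yields, with $Q_k(\xi)=\sum_{\la\in\La_k}|\hat\mu(\xi+\la)|^2$, the identity
\[
\sum_{\la\in\La_k}\prod_{j=1}^k|m_\D(b^{-j}(\xi+\la))|^2=1 .
\]
Hence
\[
Q_k(\xi)=\sum_{\la\in\La_k}\Bigl(\prod_{j\le k}|m_\D(b^{-j}(\xi+\la))|^2\Bigr)\,|\hat\mu(b^{-k}(\xi+\la))|^2 .
\]
This is a convex combination of the values $|\hat\mu(b^{-k}(\xi+\la))|^2$.

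\textbf{Main obstacle.} The hardest step is a uniform lower bound: we need $|\hat\mu(b^{-k}(\xi+\la))|^2\ge\delta>0$ for all $\la\in\La_k$, $k\ge1$ and $\xi$ in a compact set. Given such a bound, a standard argument shows $1-Q(\xi)$ satisfies a contraction inequality, forcing $Q\equiv1$. The condition $\cC\subset b[-1/2,1/2)$ ensures
\[
b^{-k}\La_k\subset\Bigl[-\tfrac{|b|}{2(|b|-1)},\,\tfrac{|b|}{2(|b|-1)}\Bigr],
\]
so all the arguments $b^{-k}(\xi+\la)$ lie in a fixed compact interval $K$. One must then verify that $\hat\mu$ has no zeros on $K$. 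If it fails on $K$, the plan is to use Łaba–Wang's refinement through "$m_\D$-invariant cycles" and show that $\gcd(\D)=1$ excludes nontrivial cycles in $K$.

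The key tool is that $\mathcal Z(\hat\mu)=\bigcup_j b^j\mathcal Z(m_\D)$, and $\gcd(\D)=1$ gives $\mathcal Z(m_\D)\cap\Z=\emptyset$ (since $m_\D(n)=1$ for integers $n$). A zero of $\hat\mu$ at some $\eta\in K$ that survives the iteration would produce an invariant cycle for the maps $x\mapsto b^{-1}(x+c)$ consisting of points where $|m_\D|=1$. Such points are integers when $\gcd(\D)=1$ and $0\in\D$, and the only integer cycle in the attractor is $\{0\}$ because of the choice $\cC\subset b[-1/2,1/2)$. With this fact, a compactness argument gives $\delta$, and hence $Q\equiv1$.
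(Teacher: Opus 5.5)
\emph{The paper does not prove this statement; it only quotes it from \cite{LW02}, so your proposal has to stand on its own.}

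\textbf{The gap.} Your orthogonality argument, the identity $\sum_{\la\in\La_k}\prod_{j\le k}|m_\D(b^{-j}(\xi+\la))|^2=1$, and the reduction of the first claim are fine. The completeness step, however, has a genuine gap. The uniform bound $|\hat\mu(b^{-k}(\xi+\la))|\ge\delta$ for all $\la\in\La_k$, $k\ge1$, is false in general under the stated hypotheses, so no compactness argument can produce it.

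Take $b=8$, $\D=\{0,1,14,15\}$, $\cC=\{-4,-2,0,2\}\subset 8[-\frac12,\frac12)$. Since $e^{-2\pi i cd/8}=e^{-2\pi i (c/2)d/4}$ and both $\frac12\cC$ and $\D$ are complete residue systems modulo $4$, this is an integral Hadamard triple, with $0\in\D$ and $\gcd(\D)=1$. Here $m_\D(\xi)=\frac14(1+e^{-2\pi i\xi})(1+e^{-28\pi i\xi})$ vanishes at $-\frac1{28}$, so $\hat\mu(-\frac27)=0$. On the other hand, $-\frac27=\sum_{j\ge1}(-2)8^{-j}=\lim_k 8^{-k}\la_k$ with $\la_k=-2(8^k-1)/7\in\La_k$. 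Hence $|\hat\mu(8^{-k}(\xi+\la_k))|\to0$ for every $\xi$. This happens even though $T(8,\cC)\subset[-\frac47,\frac27]$ contains no nonzero integer and $\La(8,\cC)$ is a spectrum by {\L}aba--Wang.

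So $\hat\mu$ does vanish on $K$, and even on $T(b,\cC)$. Your fallback inference, that a zero of $\hat\mu$ ``surviving the iteration'' produces a cycle of points with $|m_\D|=1$, has no basis. Zeros of $\hat\mu$ come from zeros of $m_\D$, which are unrelated to the unimodular points of $m_\D$. The absence of nonzero integer cycles yields no lower bound whatsoever.

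\textbf{The missing idea.} Apply the cycle analysis to $h=1-Q$ rather than to $\hat\mu$.
\begin{enumerate}
\item Because $0\in\cC$ and the elements of $\cC$ are distinct modulo $b$, $\La=\cC\oplus b\La$. Periodicity of $m_\D$ then gives $Q(\xi)=\sum_{c\in\cC}|m_\D(\tau_c\xi)|^2Q(\tau_c\xi)$ with $\tau_c\xi=(\xi+c)/b$.
\item Thus $h\ge0$ (by Bessel) is continuous, invariant under this transfer operator, and satisfies $h(0)=0$.
\item On the attractor $T(b,\cC)$ of $\{\tau_c\}$, the set where $h$ attains its maximum is compact and invariant under every branch $\tau_c$ with $m_\D(\tau_c x)\neq0$.
\item By the analysis of such invariant sets going back to Jorgensen--Pedersen and Cerveau--Conze--Raugi, on which the proof in \cite{LW02} rests, this set contains an $m_\D$-cycle.
\item With $0\in\D$ and $\gcd(\D)=1$, the points of such a cycle are integers. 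For $|b|\ge3$ the hypothesis on $\cC$ gives $T(b,\cC)\cap\Z=\{0\}$, so the maximum equals $h(0)=0$.
\item Hence $Q\equiv1$ on $T(b,\cC)$, and therefore on $\R$ since $Q$ is entire (when $\#\cC\ge2$).
\end{enumerate}

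\textbf{Hidden hypotheses.} You silently use $0\in\D$ and $|b|\ge3$, and both are needed. As printed, the statement fails in two cases:
\begin{itemize}
\item $b=2$, $\D=\{0,1\}$, $\cC=\{0,-1\}$. Here $-1=\tau_{-1}(-1)$, and $\{0,-1,-2,\dots\}$ is not a spectrum of Lebesgue measure on $[0,1]$.
\item $b=5$, $\D=\{1,3,5,7,9\}$, $\cC=\{-2,\dots,2\}$. Here $\mu_{b,\D}$ is normalized Lebesgue measure on an interval of length $2$, while $\La(5,\cC)=\Z$.
\end{itemize}
Compare the hypothesis $0\in\D\cap\cC$ in Theorem~\ref{theo3.2}.
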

 In this paper we focus on the spectral eigenvalue problem of spectral measures $\mu_{b, \D}$ if
 $b>2$ and $(b, \D)$ is an  integrally admissible pair with $b>\#\D$. In order to get our results we
 need to choose another spectrum $\La(b, \cC)$, where $\cC$ satisfies that
 \begin{eqnarray*}
   0\in\cC\subset\{0, 1, \ldots, b-2\}.
 \end{eqnarray*}
We will prove the existence of such spectrum and call it a {\it canonical spectrum} of $\mu_{b, \D}$, call $(b, \D, \cC)$ a {\it canonical Hadamard triple} and $(\mu_{b, \D}, \La(b, \cC))$ a {\it
canonical spectral pair}.

 Unexpectedly, the spectral eigenvalue problem is essentially related to the Fourier analysis built
 on Cantor measure $\mu_{4, \{0, 2\}}$. Jorgenson and Pederson \cite{JP98} proved that $(\mu_{4, \{0,
 2\}}, \La(4, \{0, 1\})$ is a spectral pair. Strichartz \cite{Str06} proved that the mock Fourier
 series of $f\in C([0, 1])$ with respect to (w.r.t.) the spectral pair $(\mu_{4, \{0, 2\}}, \La(4,
 \{0, 1\})$ is uniformly convergent on the Cantor set $T(4, \{0, 1\})$. Whereas Dutkay, Han and Sun
 \cite{DHS14} showed that there exists $f\in C([0, 1])$ such that its mock Fourier series  w.r.t.
 the spectral pair $(\mu_{4, \{0, 2\}}, 17\La(4, \{0, 1\})$ is divergent at $0$.

It is known that all spectra of $\mu_{b, \D}$ containing zero are in $\Z$ if $(b, \D)$ is an
integrally admissible pair with $\gcd(\D)=1$ \cite{DHL19}. And $\La$ is a spectrum if and only if
$-\La$ is a spectrum. Then to investigate  the spectral eigenvalue problem of the canonical spectral
pair $(\mu_{b, \D}, \La(b, \cC))$  we only need to consider
\[t\in\N=\{1, 2, \ldots\}.\]
Therefore we call the spectral eigenvalue problem  the {\it spectral integer eigenvalue problem} in
this case.

To state out results we begin with a general conjecture:
\begin{conj}\label{conj1}
  Let $(\mu, \La)$ be a singular spectral pair in $\R$. Then there are infinitely many spectral
  eigenvalues  and infinitely many non spectral eigenvalues.
\end{conj}
If this conjecture holds true, a fundamental question arises concerning its interaction with
elementary number theory.
\begin{ques}\label{ques1}
Let $p$ be a prime. Is $p^n$ a spectral integer eigenvalue for the spectral pair $(\mu, \Lambda)$ for
all $n\geq1$?
\end{ques}

On this topic the first work was obtained  by Dutkay and  Hausserman \cite{DH16}, who study the spectral integer eigenvalue problem of the pair  $(\mu_{4,\{0,2\}},{}\allowbreak \Lambda(4,\{0,1\}))$. Precisely, they proved that both Conjecture \ref{conj1} and Question \ref{ques1} (for all primes $p$ with $p>3$) are true for $(\mu_{4, \{0, 2\}}, \Lambda(4, \{0, 1\}))$. These were later generalized to $(\mu_{2q, \{0, q\}}, \La(2q, \{0, 1\})$ \cite{DK18}. Motivated by their results, as well as by developments in spectral measure theory, particularly the work of {\L}aba and Wang \cite{LW02, LW06}, in this paper, we investigate the spectral integer eigenvalue problem for canonical spectral pairs $(\mu_{b,\mathcal{D}}, \Lambda(b, \mathcal{C}))$  focusing on Conjecture \ref{conj1} and Question \ref{ques1}.

As  Minkowski \cite{Min57} expressed in his preface to {\it Diophantische Approximationen}  that the
``deepest interrelationships in analysis are of an arithmetical nature",  the spectral integer
eigenvalue problem of canonical spectral pairs is closely related to the order of $b$ modulo $t$, and
thereby to the celebrated primitive root conjecture posed by E. Artin \cite{artin65} in 1927, which
is the focal point of diverse areas of mathematics such as group theory, algebra, analytic number
theory, and algebraic geometry.

Let $(b, \D, \cC)$ be a canonical Hadamard triple and $t\in\N$. We first prove the following facts
(some of which are straightforward and analogous to those in \cite{DH16}):
\begin{itemize}
  \item If $t$ is not a spectral eigenvalue, then so is $kt$ for $k\in\N$;
  \item If $t$ is not a spectral eigenvalue and all proper factors of $t$ are spectral eigenvalues,
      then $\gcd(t, b)=1$;
  \item $b-1$ is not a spectral eigenvalue;
  \item If $p_1, p_2, \ldots, p_m$ are all distinct prime factors of $b$ and $t$ is a spectral
      eigenvalue, then so is $tp_1^{\al_1}p_2^{\al_2}\cdots p_m^{\al_m}$ for each
      $\al_k\in\Z_{\geq0}$ with $1\le k\le m$. Here and in the sequel, we denote $\Z_{\geq
      n}=\{n,n+1,\ldots\}$ for $n\in\Z$.
\end{itemize}
From those  we begin with the following definition:
\begin{defi}
Let $(b, \D, \cC)$ be a canonical Hadamard triple and $t\in\N$. $t$ is called a primitively non
spectral integer eigenvalue of $(\mu_{b, \D}, \La(b, \cC))$ if $t$ is not a spectral eigenvalue and
all proper factors $t'$ of $t$ (i.e., $t'\mid t$ and $t'\neq t$) are spectral eigenvalues. And $t$ is
called a primitively spectral integer eigenvalue of $(\mu_{b, \D}, \La(b, \cC))$ if it is a spectral
eigenvalue with $\gcd(t, b)=1$.
\end{defi}
Hence, to study the spectral integer eigenvalue problem in this case it is sufficient to investigate
the primitive cases. We first prove the following result:
\begin{thm}
  There are infinitely many primitively non spectral integer eigenvalues of the canonical spectral
  pair $(\mu_{b, \D}, \La(b, \cC))$.
\end{thm}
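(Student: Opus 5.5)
The plan is to produce infinitely many non-spectral eigenvalues that are pairwise coprime. Each one then yields a distinct primitively non spectral integer eigenvalue, namely a minimal non-spectral divisor. Concretely, if $t$ is not a spectral eigenvalue, choose a divisor $t_0\mid t$ that is not a spectral eigenvalue and is minimal under divisibility among such divisors. Then every proper factor of $t_0$ is a spectral eigenvalue, so $t_0$ is primitively non spectral. By the second listed fact, $\gcd(t_0,b)=1$. Moreover $t_0>1$, since $1$ is trivially an eigenvalue. Hence, if we find infinitely many pairwise coprime non-spectral $t$'s, the corresponding $t_0$'s are distinct, and the theorem follows.

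The source of non-spectral eigenvalues is the third fact, that $b-1$ is not a spectral eigenvalue, generalized to $b^n-1$. The canonical triple $(b,\D,\cC)$ yields a canonical triple at scale $b^n$: take $\D_n=\D+b\D+\cdots+b^{n-1}\D$ and $\cC_n=\cC+b\cC+\cdots+b^{n-1}\cC$. Then $\mu_{b^n,\D_n}=\mu_{b,\D}$, $\La(b^n,\cC_n)=\La(b,\cC)$, and
\[
\cC_n\subset\{0,1,\ldots,(b-2)(1+b+\cdots+b^{n-1})\}\subset\{0,\ldots,b^n-2\}.
\]
Applying the argument proving that $b-1$ is not an eigenvalue to this triple shows that $b^n-1$ is not a spectral eigenvalue for every $n\ge1$. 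I expect this argument to run as follows. Since $\cC\subset\{0,\ldots,b-2\}$, the point $-1=\sum_k (b-1)b^k$ (a $b$-adic expansion) does not lie in $\La$. Consequently $(b-1)\La$ misses some integer class that completeness would require, or equivalently the periodic point $-1$ of the dual IFS gives a nontrivial invariant cycle. By the first fact, every multiple of $b^n-1$ is also non-spectral.

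It remains to extract pairwise coprime elements from $\{b^n-1\}$. Here I use Zsigmondy's theorem: for $b\ge3$ and every $n\ge 3$ (all $n\geq 2$ unless $b+1$ is a power of 2), $b^n-1$ has a primitive prime divisor $q_n$ that divides no $b^m-1$ with $m<n$. By the first fact, $q_n$ being an eigenvalue would make $b^n-1$, a multiple of $q_n$, an eigenvalue only in the wrong direction, so instead I argue directly. Let $t_0$ be the minimal non-spectral divisor of $b^n-1$ constructed above. It suffices to show that these $t_0$ are infinitely many distinct integers. If only finitely many distinct values occurred, say $t^{(1)},\dots,t^{(r)}$, each coprime to $b$, then every $b^n-1$ would be divisible by some $t^{(i)}$. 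Taking $n$ coprime to all $\operatorname{ord}_{t^{(i)}}(b)$ gives a contradiction, provided each order exceeds $1$. The order-$1$ case means $t^{(i)}\mid b-1$, and this must be ruled out separately. For that, I choose $n$ so that $\gcd(b^n-1,b-1)$ contributes no non-spectral divisor, which is where the details need care.

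The main obstacle is this last point: showing that the minimal non-spectral divisors of $b^n-1$ do not all collapse into a finite set, especially into divisors of $b-1$. I would first try to prove the sharper claim that the minimal non-spectral divisor can be chosen among divisors carrying a primitive Zsigmondy prime $q_n$. For this I would show that every divisor $d$ of $b^n-1$ with $\operatorname{ord}_d(b)<n$ is a spectral eigenvalue, using the cycle characterization of completeness, since such $d$ only produce cycles of shorter period that are already handled by $\cC$. If that claim holds, distinct $n$ give $t_0$ with distinct primitive primes, and the proof is complete.
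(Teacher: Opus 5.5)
Your reduction is sound: pairwise coprime non-spectral integers do give distinct primitive ones via minimal non-spectral divisors. It is also true that $b^n-1$ is never a spectral eigenvalue. This follows at once from Proposition~\ref{prop2.2}(ii) and Remark~\ref{nonne} with $N=n$, $c_1=c\ne0$ and $c_k=0$ otherwise, since $(b^n-1)\frac{c}{b^n-1}=c$.

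The gap is that the family $\{b^n-1\}$ cannot supply the coprimality you need, and this is structural, not a matter of details. Every $b^n-1$ is divisible by $b-1$, which is itself non-spectral. So a fixed minimal non-spectral divisor $d_0\mid b-1$ divides every $b^n-1$, and your $t_0^{(n)}$ may equal $d_0$ for all $n$. Choosing $n$ so that $\gcd(b^n-1,b-1)$ contributes no non-spectral divisor is impossible, because that gcd is always $b-1$.

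Your sharper claim is false too. The number $d=b-1$ (or $d=b^m-1$ with $m\mid n$, $m<n$) divides $b^n-1$, has $O_b(d)<n$, and is not a spectral eigenvalue. Even the weaker hope, that some primitive non-spectral divisor of $b^n-1$ carries the Zsigmondy prime, fails. For $b=4$, $\cC=\{0,1\}$, Proposition~\ref{prop2.2}(ii) shows that $3$ is incomplete and $5$ is complete. So the only primitively incomplete divisor of $15=4^2-1$ is $3$.

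What is missing is a supply of non-spectral integers coprime to $b-1$, and to any finite list already found. The paper takes $t=\frac{b^{N}-1}{b-1}$ with $N\ge b-1$. Putting $c_1=\cdots=c_{b-1}=c\ne0$ and $c_k=0$ for $b-1<k\le N$ gives
\[
t\,\frac{c(1+b+\cdots+b^{b-2})}{b^N-1}=c\,\frac{1+b+\cdots+b^{b-2}}{b-1}\in\Z\setminus\{0\},
\]
so $t$ is non-spectral. The paper then takes $N=q+1$ with $q=O_b((b-1)^\alpha t_1\cdots t_m)$ and $\alpha\ge2$. This gives $t\equiv1\pmod{(b-1)t_1\cdots t_m}$, so any primitive non-spectral divisor of $t$ is new.

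Your order idea also works once you switch to this family. Since $\frac{b^N-1}{b-1}\equiv N\pmod{b-1}$, for a prime $N>b-1$ every divisor $d>1$ of $\frac{b^N-1}{b-1}$ is coprime to $b-1$. Hence $O_b(d)=N$, and the minimal non-spectral divisors coming from distinct such primes are distinct.
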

The complementary case, however, is still unresolved, leading to the following conjecture.
\begin{conj}\label{conj2}
  There are infinitely many primitively spectral integer eigenvalues of the canonical spectral pair
  $(\mu_{b, \D}, \La(b, \cC))$.
\end{conj}

Let $t\in\N$ with $\gcd(t, b)=1$. As in number theory, the {\it order of $b$ modulo $t$} is defined
by
\begin{eqnarray*}
  O_b(t)=\min\{k\geq1: b^k\equiv1\pmod t\}.
\end{eqnarray*}
 The following notion (from \cite{DH16}) is useful in this paper:
\begin{eqnarray*}
  \ell_b(t)=\max\{k\geq1: t^k\mid(b^{O_b(t)}-1)\}.
\end{eqnarray*}
Recall that  Artin's conjecture says that if $b > 1$ is  not a perfect square, i.e.,  $b\ne n^2$ for
any $n\in\N$, then
there are infinitely many primes $p$ for which $b$ is a {\it primitive root}, i.e., $O_b(p)=p-1$. The
main results of this paper is the following:
\begin{thm}\label{thmain}
Let $(b, \D, \cC)$ be a canonical Hadamard triple. Then the following statements  hold.\\
\rm{(i)} If Artin's conjecture holds, then Conjecture \ref{conj2} is true for non-perfect square
$b>1$;\\
\rm{(ii)} If Artin's conjecture holds and $2^k\#\cC\le b=\al^{2^k}$ for some non-perfect square
$\al$, then Conjecture \ref{conj2} is true;\\
\rm{(iii)} If $b-1\not\in\cC+\cC$, then Conjecture \ref{conj2} is true;\\
\rm{(iv)} If $p^{\ell_b(p)}$ is a spectral eigenvalue for some prime $p$ satisfying
$p>(b-1)(b-\#\cC)$, then Conjecture \ref{conj2} is true.
\end{thm}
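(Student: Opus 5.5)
The plan is to reduce spectrality of $t\La(b,\cC)$, for $\gcd(t,b)=1$, to a combinatorial condition on base-$b$ expansions, and then verify that condition for explicit families of $t$.

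\emph{Reduction.} If $\gcd(t,b)=1$, then $(b,\D,t\cC)$ is again an integral Hadamard triple: the Galois automorphism $e^{2\pi i/b}\mapsto e^{2\pi i t/b}$ preserves the vanishing of the sums $\sum_{d\in\D}e^{-2\pi i(c-c')d/b}$. Also $t\La(b,\cC)=\La(b,t\cC)$. By the {\L}aba--Wang criterion (via Jorgensen--Pedersen/Dutkay--Jorgensen cycles), $\La(b,t\cC)$ fails to be a spectrum exactly when there is a nonzero $m_\D$-cycle. Such a cycle is a periodic orbit of $x\mapsto (x+tc)/b$, with $c\in\cC$, along which $|m_\D|\equiv1$.

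Since $\gcd(\D)=1$, $|m_\D(x)|=1$ forces $x\in\Z$. A periodic point has the form $x=tN/(b^k-1)$ with $N=\sum_{j=0}^{k-1}c_jb^j$ and $c_j\in\cC$. Because $\cC\subset\{0,\dots,b-2\}$, we have $0<N<b^k-1$ whenever $N\ne0$. So the key lemma I would prove first is the following: $t$ is a spectral eigenvalue iff there are no $k\ge1$ and nonzero $N$ with all base-$b$ digits (of length $k$) in $\cC$ such that $(b^k-1)\mid tN$. The cyclic shifts of $N$ are then automatically handled, since $b^iN\bmod (b^k-1)$ are exactly the other cycle points.

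\emph{(i) and (ii).} Take a prime $p>b$ with $O_b(p)=p-1$ and set $t=p$. If $p\nmid b^k-1$, then $(b^k-1)\mid N$, which is impossible. Otherwise $(p-1)\mid k$ and $N=j(b^k-1)/p$ with $1\le j\le p-1$. Its digits are the periodic digits of $j/p$, namely $\lfloor b r_i/p\rfloor$ with $r_i=jb^i\bmod p$. Since $b$ is a primitive root, $r_i$ runs over all nonzero residues, so $r_i=p-1$ occurs. This gives digit $\lfloor b(p-1)/p\rfloor=b-1\notin\cC$, a contradiction. Artin's conjecture supplies infinitely many such $p$.

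For (ii), take $p$ with $\al$ a primitive root. Then $\{r_i\}$ is the coset $jH$ of the subgroup $H=\langle b\rangle$ of index dividing $2^k$. I would show by a counting argument that the digits $\lfloor br/p\rfloor$ with $r\in jH$ take more than $\#\cC$ values, using $\#cC\le b/2^k$. The digit intervals each contain about $p/b$ residues while $jH$ has $(p-1)/2^k$ elements, so $jH$ cannot fit inside $\#\cC$ intervals; equivalently, at most $\#\cC\lceil p/b\rceil<(p-1)/2^k$ residues have digits in $\cC$. This is the step I expect to be most delicate, because of the rounding at the interval endpoints, and it is where the hypothesis $2^k\#\cC\le b$ will be used sharply.

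\emph{(iii) and (iv).} For (iii), take $t=b^m+1$, which is coprime to $b$ and has $O_b(t)=2m$. In a cycle, $k$ is a multiple of $2m$, and $(b^k-1)\mid tN$ forces $N$ to be a multiple of $(b^k-1)/(b^m+1)$. Reducing to period $2m$, $N$ has the form $J(b^m-1)$ with $0<J<b^m+1$ (suitably extended periodically). Writing $J(b^m-1)=(J-1)b^m+(b^m-J)$, the digits at positions $i$ and $i+m$ sum to $b-1$. Hence two digits of $\cC$ sum to $b-1$, contradicting $b-1\notin\cC+\cC$. This gives infinitely many primitively spectral eigenvalues $b^m+1$.

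For (iv), starting from the spectral eigenvalue $p^{\ell_b(p)}$, I would show that $p^n$ is spectral for all $n>\ell_b(p)$ by lifting. For $n>\ell_b(p)$ we have $O_b(p^{n})=p^{n-\ell_b(p)}O_b(p)$, and the residues $jb^i\bmod p^n$ spread across a full subgroup coset of size at least $p$. The digits of $j/p^n$ then hit any interval of length $p^n/b$ with enough density. The bound $p>(b-1)(b-\#\cC)$ should guarantee, by pigeonhole over the $b-\#\cC$ forbidden digit intervals, that some forbidden digit occurs unless the cycle descends to a cycle for $p^{\ell_b(p)}$, which is excluded by hypothesis.
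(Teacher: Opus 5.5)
\textbf{Overall.} Your arguments for (i) and (iii) are correct and more direct than the paper's, but (ii) has a genuine gap in the boundary case $2^k\#\cC=b$, and (iv) is only a sketch.

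\textbf{Parts (i) and (iii).} In (i), the orbit point $p-1$ directly produces the forbidden digit $b-1$; this replaces the counting of Proposition~\ref{prop4.6}. In (iii), the explicit family $t=b^m+1$ with the complementary-digit argument is Lemma~\ref{lem4.3} in disguise, since $b^mx\equiv -x \pmod{t}$. It makes the paper's search for infinitely many primes with $O_b(p)$ even unnecessary. One correction: ``$k$ is a multiple of $2m$'' should be stated as a normalisation obtained by repeating the digit block. A cycle coming from a proper divisor of $b^m+1$ need not have minimal period divisible by $2m$.

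\textbf{The gap in (ii).} Your inequality $\#\cC\lceil p/b\rceil<(p-1)/2^k$ is false whenever $2^k\#\cC=b$. Indeed, then $\#\cC\lceil p/b\rceil\ge \#\cC p/b>\#\cC(p-1)/b=(p-1)/2^k$. This case is allowed by the hypothesis and contains the basic example $b=4$, $\cC=\{0,1\}$, $\al=2$, $k=1$. There the nonzero residues with first digit in $\{0,1\}$ are exactly $1,\dots,(p-1)/2$, which is as many as $\#(jH)=(p-1)/2$. So no first-digit count, however the rounding is handled, can give a contradiction.

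You must use that \emph{all} digits lie in $\cC$. Each orbit point $r$ satisfies $r/p\in T(b,\cC)\subset[0,\frac{b-2}{b-1}]$ and lies in one of $\#\cC$ residue classes mod $b$. So there are at most $\#\cC\lceil \frac{p(b-2)}{b(b-1)}\rceil$ candidates, and this is $<\#\cC(p-1)/b\le \#(jH)$ once $p>b^2-1$ (this is Proposition~\ref{prop4.6}). Alternatively, counting the first two digits gives at most $(\#\cC)^2\lceil p/b^2\rceil$ candidates, which also suffices for large $p$ because $\#\cC<b$.

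\textbf{Part (iv).} This is only a heuristic. ``Spread across a full subgroup coset'' and ``pigeonhole over the $b-\#\cC$ forbidden digit intervals'' do not say why a forbidden digit must occur. Two ingredients are missing.

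First, take the least $m$ with $p^m$ incomplete. The hypothesis gives $m>\ell:=\ell_b(p)$, so $p^m$ is primitively incomplete, and every cycle point $x$ is prime to $p$ by Proposition~\ref{prop2.12}. This replaces your ``unless the cycle descends to $p^{\ell_b(p)}$'', which is inaccurate: a non-primitive cycle may descend to any $p^n$ with $n<m$.

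Second, $p$ is odd (as $p>(b-1)(b-\#\cC)\ge 2$), and $b^{O_b(p)}=1+p^{\ell}u$ with $p\nmid u$. Hence this element has order $p^{m-\ell}$ modulo $p^m$. The points $b^{iO_b(p)}x\bmod p^m$, $0\le i<p^{m-\ell}$, are cycle points by Theorem~\ref{theo2.7}. They are distinct and all $\equiv x\pmod{p^\ell}$, so they exhaust that residue class in $[0,p^m)$. One of them is therefore $\ge p^m-p^\ell>\frac{b-2}{b-1}p^m$, because $p^{m-\ell}\ge p>b-1$. This contradicts $T(b,p^m\cC)\subset[0,\frac{b-2}{b-1}p^m]$. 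The paper's Theorem~\ref{theo4.11} reaches the same contradiction by counting the $p^{m-\ell}$ points of one class mod $p^{\ell}$ against the $\#\cC$ admissible classes mod $b$.
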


To prove our main results, we begin with the following interesting problem, which is of independent
interest: Let $0\in\cC$ be a complete representation of the group $\Z/b\Z$ and let $t\in\N$ with
$\gcd(t, b)=1$. Under what conditions can every integer \(n \in \mathbb{Z}\) be expressed in terms of
$(b, t\cC)$ using only finitely many terms? It is not difficult to get the following result:
\begin{thm}
Let $0\in\cC$ be a complete representation of the group $\Z/b\Z$ and let $t\in\N$ with $\gcd(t,
b)=1$. Then
$$\Z=\bigcup_{n=1}^\infty(t\cC+bt\cC+\cdots+b^{n-1}t\cC-b^n(T(b, t\cC)\cap\Z)).$$
\end{thm}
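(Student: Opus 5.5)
The plan is to run the standard base-$b$ division algorithm with digit set $t\cC$. The key point is that the orbit of any integer under this algorithm falls into a finite set and is therefore eventually periodic. Periodicity then gives an integer tail lying in $-T(b,t\cC)$.

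\emph{Step 1 (complete residues).} Since $\cC$ is a complete residue system of $\Z/b\Z$ and $\gcd(t,b)=1$, multiplication by $t$ is a bijection of $\Z/b\Z$. Hence $t\cC$ is again a complete residue system modulo $b$. Put $M=\max_{c\in\cC}|tc|$.

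\emph{Step 2 (digit algorithm).} Given $m\in\Z$, set $m_0=m$. Inductively choose the unique $d_j\in t\cC$ with $d_j\equiv m_j \pmod b$, and set $m_{j+1}=(m_j-d_j)/b\in\Z$. By induction, for every $k\ge 1$,
$$m=d_0+bd_1+\cdots+b^{k-1}d_{k-1}+b^k m_k .$$
Thus it suffices to find some $k\ge1$ with $-m_k\in T(b,t\cC)\cap\Z$.

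\emph{Step 3 (eventual periodicity).} From $|m_{j+1}|\le (|m_j|+M)/|b|$ and $|b|\ge 2$, the quantity $|m_j|$ eventually enters and stays in the bounded interval $[-M/(|b|-1)-1,\,M/(|b|-1)+1]$. This interval contains only finitely many integers. The map $m_j\mapsto m_{j+1}$ is deterministic, since the digit is uniquely determined by the residue. Therefore the sequence $(m_j)$ is eventually periodic: there exist $k\ge1$ and $p\ge1$ with $m_{k+p}=m_k$. We may take $k\ge 1$ by enlarging $k$ if necessary.

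\emph{Step 4 (the tail lies in $T$).} Applying the identity of Step 2 starting from $m_k$ gives
$$m_k=E+b^pm_k,\qquad E=\sum_{i=0}^{p-1}d_{k+i}b^i .$$
Hence
$$-m_k=\frac{E}{b^p-1}=\sum_{j\ge1}b^{-jp}E .$$
Periodicity of the digits, $d_{k+i+p}=d_{k+i}$, lets us expand this as
$$\sum_{j\ge1}\sum_{i=0}^{p-1}d_{k+i}\,b^{i-jp}.$$
This is a series of the form $\sum_{l\ge1}e_l b^{-l}$ with every $e_l\in t\cC$. So $-m_k\in T(b,t\cC)\cap\Z$.

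Substituting into Step 2 yields
$$m\in t\cC+bt\cC+\cdots+b^{k-1}t\cC-b^k\bigl(T(b,t\cC)\cap\Z\bigr).$$
Since $m$ was arbitrary, this proves the asserted union equals $\Z$; the reverse inclusion is trivial.

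The only mildly delicate point is Step 3: the boundedness and the determinism together are what force periodicity. Step 4 is where the coprimality of $t$ and $b$ is really used, through Step 1.
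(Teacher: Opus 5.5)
Your proof is correct and is essentially the paper's argument. The paper cites the proof of (i)$\Rightarrow$(ii) in Proposition~\ref{prop2.2add}, which mirrors that of Proposition~\ref{prop2.2}: run the digit expansion with $t\cC$, observe that the remainders are bounded and hence eventually periodic, and note that the periodic remainder $s_\ell=-E/(b^N-1)$ lies in $-T(b,t\cC)\cap\Z$. You fill in the boundedness and series-expansion details the paper leaves implicit.

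One small correction to your closing remark: coprimality is used in Step 2, to guarantee that every residue class modulo $b$ contains a digit of $t\cC$. Step 4 needs no arithmetic hypothesis at all.
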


\section{The complete integer problem w.r.t. a pair $(b,\cC)$}
\subsection{Sub-representation of the quotient group $\Z/b\Z$}
\begin{defi}Let $b\in \Z_{\ge 2}$ and $0\in\cC\subset\Z$ be a sub-representation of the quotient group $\Z/b\Z$. We say that $s\in\Z$ is generated by $(b, \cC)$ if there exist sequences
$\{c_k\}\subset\cC$ and $\{s_k\}\subset\Z$ such that $s_0=s$ and $s_k-c_k=bs_{k+1}$ for $k\ge 0$. We denote all integers  generated by $(b, \cC)$ as $\Gamma(b, \cC)$.
\end{defi}
Clearly for each $s\in\Gamma(b, \cC)$ the corresponding sequence $\{c_k\}_{k=0}^\infty$  generated by
$(b, \cC)$ is unique, and $\Gamma(b, \cC)=\Z$ if and only if $\cC$ is a complete residue system
modulo $b$. We define
\begin{eqnarray*}
  \Lambda(b, \cC)=\bigcup_{k=1}^\infty (\cC+b\cC+\cdots+b^{k-1}\cC):=\sum_{k=1}^\infty b^{k-1}\cC.
\end{eqnarray*}
It is easy to see that $\Lambda(b, t\cC)\subset\Gamma(b, t\cC)$ for any $t\in\N$. When equality
holds, we give the following definition.
\begin{defi}
Let $t\in \N$. The $t$ is  called an complete integer w.r.t. $(b, \cC)$ if
$$\Gamma(b, t\cC)=\Lambda(b, t\cC).$$
Otherwise $t$ is called incomplete.
\end{defi}
\begin{rem}
Here we do not assume that $\gcd(t, b)=1$.
\end{rem}
\begin{prop}\label{prop2.2}
Let $b>1$ be an integer and let $0\in\cC\subset\Z$  with  all elements of it being in different
cosets of $\Z/b\Z$. Let $t\in \N$ with $\gcd(t,b)=1$. Then the following statements are equivalent:\\
\rm{(i)} $t\in\N$ is incomplete w.r.t. $(b, \cC)$;\\
\rm{(ii)} There exist $c_k\in\cC,1\leq k\leq N$ such that
\begin{eqnarray*}
  t\frac{c_1+c_2b+\cdots+c_Nb^{N-1}}{b^N-1}\in\Z\setminus\{0\},
\end{eqnarray*}
where $N$ is the minimal number such that the above holds;\\
\rm{(iii)} There exists $\{c_k\}_{k=1}^N\ne\{0\}$ such that
$$x_{k+1}=\frac{x_k+tc_k}b\in\Z\quad\text{for $1\le k<N$ and}\quad x_1=\frac{x_N+tc_N}b\in\Z.$$
\rm{(iv)} $T(b, t\cC)\cap\Z\ne\{0\}$, where
$T(b, t\cC)=\{\sum_{j=1}^\infty\frac{tc_j}{b^j}:c_j\in \cC\}$.
\end{prop}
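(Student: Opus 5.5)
The plan is to prove the cycle (i)$\Rightarrow$(iii)$\Rightarrow$(ii)$\Rightarrow$(iv)$\Rightarrow$(i). The basic tool is the digit map on $\Gamma(b,t\cC)$. Since $\gcd(t,b)=1$ and the elements of $\cC$ lie in distinct cosets, the elements of $t\cC$ also lie in distinct cosets. Hence each $s\in\Gamma(b,t\cC)$ has a unique digit $tc_0\equiv s \pmod b$, and a unique successor $s_1=(s-tc_0)/b\in\Gamma(b,t\cC)$.

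\textbf{Dynamics of the digit map.} Put $M=t\max_{c\in\cC}|c|$. If $|s|>M/(b-1)$, then $|s_1|\le (|s|+M)/b<|s|$. So every orbit $s_0,s_1,\dots$ eventually enters the finite set $\{|x|\le M/(b-1)\}$ and from then on is eventually periodic. Moreover, $s\in\Lambda(b,t\cC)$ exactly when its orbit reaches $0$: the orbit of $0$ is $0,0,\dots$, because $0\in\cC$ and the digit is unique.

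\textbf{(i)$\Rightarrow$(iii).} If $t$ is incomplete, pick $s\in\Gamma(b,t\cC)\setminus\Lambda(b,t\cC)$. Its orbit never hits $0$ but eventually enters a cycle $x_1,\dots,x_N$ with $x_{k+1}=(x_k-tc_k)/b$. The statement writes the recursion as $(x_k+tc_k)/b$; this is the same thing after the substitution $x\mapsto -x$, so I will match signs accordingly. The cycle avoids $0$, so the digits are not all zero: if all $c_k$ were $0$, then $x_1=x_1/b^N$ would force $x_1=0$.

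\textbf{(iii)$\Rightarrow$(ii).} Iterate the recursion around the cycle to get $b^N x_1=x_1+t(c_N+bc_{N-1}+\dots)$, up to index conventions. This gives $x_1=t\,(\sum c_k b^{k-1})/(b^N-1)\in\Z$ after relabelling. The value is nonzero: if the cycle had $x_1=0$, then $x_2=tc_1/b$ would be an integer, forcing $c_1=0$ since $\gcd(t,b)=1$ and $c_1\equiv 0=c$ for $0\in\cC$; repeating this gives all $c_k=0$. Then choose the minimal $N$.

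\textbf{(ii)$\Rightarrow$(iv).} The periodic expansion gives $\sum_{j\ge1} t c_j b^{-j}$ with $c_{j+N}=c_j$, which equals $t\frac{\sum_{k=1}^N c_kb^{N-k}}{b^N-1}$. This is an element of $T(b,t\cC)$, and it is a nonzero integer after reindexing the digits (cyclic reversal does not matter, since any ordering is allowed).

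\textbf{(iv)$\Rightarrow$(i).} Let $0\ne x=\sum_{j\ge1} tc_jb^{-j}\in\Z$. Set $s_0=-x$ and $s_k=b^k s_0+\sum_{j\le k} t c_j b^{k-j}$, which equals $-\sum_{j>k} tc_jb^{k-j}$. All $s_k$ are integers, they satisfy $s_k = b s_{k-1}+tc_k$, and they are bounded. Reading this in the other direction, $s_{k-1}=(s_k-tc_k)/b$, which shows that the integer $s$ can be expanded indefinitely. This is the step I expect to be the main obstacle: getting the direction and sign conventions to match the definition of $\Gamma$.

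The cleanest route is to use directly that $-x$ satisfies $-x=(\,(-x\cdot b)+\dots)$. Equivalently, observe that $y:=-x$ has the expansion $y - t\cdot(-c_1)\dots$. Instead I would argue as follows. The number $z=-x$ satisfies $bz' = z - tc$ with $z'=-\sum_{j\ge2}tc_jb^{1-j}$. That is, $z=bz'+tc_1$ with both $z$ and $z'$ integers. So $z\in\Gamma(b,t\cC)$, with digits $c_1,c_2,\dots$ and successors $z',\ldots$, all of which are integers.

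If $z\in\Lambda(b,t\cC)$, its orbit would reach $0$ after finitely many steps, say $k$ steps, giving $\sum_{j>k}tc_jb^{k-j}=0$. By the uniqueness of digits this forces $c_j=0$ for $j>k$. Then $z=-\sum_{j\le k}tc_jb^{-j}$ would be an integer; multiplying by $b^k$ and using uniqueness of residues gives $c_k=0,\dots,c_1=0$, so $x=0$, a contradiction. Hence $t$ is incomplete.
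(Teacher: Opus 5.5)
Your steps (i)$\Rightarrow$(iii)$\Rightarrow$(ii)$\Rightarrow$(iv) are correct, but (iv)$\Rightarrow$(i) has a genuine gap, at exactly the point you flagged. Write $x_k=\sum_{j>k}tc_jb^{k-j}$ for the tails of $x=x_0$. These are integers and satisfy $bx_k=tc_{k+1}+x_{k+1}$. Hence for $z=-x_0$ and $z'=-x_1$ the true relation is $z'=bz+tc_1$, not $z=bz'+tc_1$. Equivalently, the negated tails satisfy $(-x_{k+1})-tc_{k+1}=b(-x_k)$: under the digit map of $\Gamma(b,t\cC)$ the successor of $-x_{k+1}$ is $-x_k$. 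So the negated tails run \emph{backwards} and say nothing about a forward expansion of $-x$. For instance, take $b=4$, $\cC=\{0,2\}$, $t=15$ and $x=8$, with code $2,0,2,0,\dots$. Then $z=-8$, $z'=-2$ and $bz'+tc_1=22\neq-8$. In fact $-8-30$ is not divisible by $4$, so $c_1$ is not even the digit of $-8$. Consequently neither the claim that $z\in\Gamma(b,t\cC)$ with digits $c_1,c_2,\dots$, nor the ensuing argument that $z\notin\Lambda(b,t\cC)$, is justified.

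What is missing is a recurrence argument. This is how the paper handles (iv)$\Rightarrow$(ii), via eventual periodicity of the code, before closing with (ii)$\Rightarrow$(i) by negating a cycle value. Your first attempt already had the right relation $s_{k-1}=(s_k-tc_k)/b$ for $s_k=-x_k$; you should apply it at a repeated value rather than at $s_0$.

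First, no tail vanishes. If $x_{k+1}=0$, then $bx_k=tc_{k+1}$ forces $c_{k+1}=0$, since $\gcd(t,b)=1$ and $0$ is the only element of $\cC$ divisible by $b$. Hence $x_k=0$, and downward induction would give $x_0=0$.

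Next, the $x_k$ are integers in the bounded set $T(b,t\cC)$, so $x_\ell=x_{\ell+N}$ for some $\ell\ge 0$, $N\ge 1$. Then $s_{\ell+N}\to s_{\ell+N-1}\to\cdots\to s_\ell=s_{\ell+N}$ is a closed orbit of your digit map. So $s_{\ell+N}\in\Gamma(b,t\cC)$, with a periodic orbit consisting of nonzero integers. By your own characterization it is not in $\Lambda(b,t\cC)$, and $t$ is incomplete. Equivalently, the identity $x_\ell=\sum_{k=1}^{N}tc_{\ell+k}b^{-k}+b^{-N}x_{\ell+N}$ yields (ii) directly with the digit block $c_{\ell+N},\dots,c_{\ell+1}$.
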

\proof
(i)$\Leftrightarrow$(ii). For any $s\in\Gamma(b, t\cC)\setminus \Lambda(b,t\cC)$ there exist
$c_1\in\cC$ and $s_1\in\Z$ satisfying that $s-tc_1=bs_1$. Similarly one has for $n\ge 1$
\begin{eqnarray*}
  s=tc_1+btc_2+\cdots+b^{n-1}tc_{n}+b^{n}s_n,
\end{eqnarray*}
where $s_n\in \Z$ and $c_i\in\cC,1\leq i\leq n$. Then $\{s_n\}_{n=1}^\infty$ is bounded, eventually
periodic and all $s_n\ne 0$ by the assumption. That is, there exist the smallest positive integers
$\ell$ and $N$ such that
\begin{eqnarray*}
  s=tc_1+btc_2+\cdots+b^{\ell-1}tc_\ell+b^\ell s_\ell
\end{eqnarray*}
and
\begin{eqnarray*}
  s_\ell&=&tc_{\ell+1}+btc_{\ell+2}+\cdots+b^{N-1}tc_{\ell+N}+b^N s_{\ell+N}\\
  &=&tc_{\ell+1}+btc_{\ell+2}+\cdots+b^{N-1}tc_{\ell+N}+b^N s_{\ell},
\end{eqnarray*}
which implies that
\begin{eqnarray*}
  s_\ell=-\frac{tc_{\ell+1}+btc_{\ell+2}+\cdots+b^{N-1}tc_{\ell+N}}{b^N-1}\in \Z\setminus\{0\}.
\end{eqnarray*}
Hence (ii) follows. Conversely, set
\begin{equation}\label{eqqtb}
s=-t\frac{c_1+bc_2+\cdots+b^{N-1}c_N}{b^N-1}\in\Z\setminus\{0\}.
\end{equation}
Therefore
\begin{align*}
s &= tc_1+btc_2+\cdots+b^{N-1}tc_N+b^Ns \\
  &= \sum_{k=1}^{n} b^{(k-1)N} \Bigl(tc_1+btc_2+\cdots+b^{N-1}tc_N\Bigr) + b^{nN}s.
\end{align*}
This implies that $s\in \Gamma(b,t\cC)$. If $s\in\Lambda(b, t\cC)$, then there exist $c_k'\in\cC$,
$1\le k\le M$, such that $s=t\sum_{k=1}^Mb^{k-1}c_k'$. According to the definition of $\cC$, $\gcd(t,
b)=1$ and the above equality, this is impossible. Hence, (ii)$\Rightarrow$(i).

(ii)$\Leftrightarrow$(iii). From (iii) we define $c_{k+N}=c_k$  and $x_{k+1}=\frac{x_k+tc_k}b$ for
$k\ge 1$.  Then $x_{N+l}=x_l\ne 0$ for some $l$ with $1\leq l\leq N-1$. Iterating them one gets that
\begin{eqnarray*}
       x_l=x_{N+l}=\frac{x_l+tc_l+tbc_{l+1}+\cdots+tb^{N-1}c_{N+l-1}}{b^N},
\end{eqnarray*}
which is equivalent to that
\begin{eqnarray}\label{eq2.1}
       x_l=t\frac{c_l+bc_{l+1}+\cdots+b^{N-1}c_{N+l-1}}{b^N-1}\in\Z\setminus\{0\}.
\end{eqnarray}
Then (ii) follows. Conversely, let $N$ be the smallest integer such that (ii) holds.  Set
$$y_1=t\frac{c_1+c_2b+\cdots+c_Nb^{N-1}}{b^N-1}\in\Z\setminus\{0\}.$$
Define $c_{N+k}=c_k$ for $k\ge 1$ and
$$y_{k+1}=\frac{y_k+tc_k}b, \qquad \text{for $k\ge 1$.}$$
To show that the sequence $\{y_k\}_{k=1}^\infty$ is an integer cycle w.r.t. $(b, t\cC)$, it is
sufficient  to show that all $y_k\in\Z$ and $y_{N+1}=y_1$. The assertion $y_{N+1}=y_1$ follows from
 \begin{eqnarray*}
       y_{N+1}&=&\frac{y_1+tc_1+tbc_{2}+\cdots+tb^{N-1}c_{N}}{b^N}\\
       &=&\frac{t\frac{c_1+c_2b+\cdots+c_Nb^{N-1}}{b^N-1}+tc_1+tbc_{2}+\cdots+tb^{N-1}c_{N}}{b^N}=y_1.
\end{eqnarray*}
Noticing that $y_{N}=by_{N+1}-tc_N\in\Z$, after reasoning, all $y_k\in\Z$. Hence
(ii)$\Leftrightarrow$(iii).

Now we prove (ii)$\Leftrightarrow$(iv). (ii)$\Rightarrow$(iv) is trivial by the definition of $T(b,
t\cC)$. Next we prove that (iv)$\Rightarrow$(ii). Let $x_0\in T(b, t\cC)\cap(\Z\setminus\{0\})$. Then
there exists a unique code $\{c_k\}_{k=1}^\infty\subset\cC$ such that $x_0=t\sum_{k=1}^\infty
c_kb^{-k}$. Then there exist the smallest integers $\ell,N\in\N$ such that
\begin{eqnarray*}
  x_0&=&t\sum_{k=1}^\ell c_kb^{-k}+t\left(\sum_{k=\ell+1}^{\ell+N}c_kb^{-k}\right)\sum_{k=1}^\infty
  b^{-(k-1)N}\\
  &=&t\sum_{k=1}^\ell c_kb^{-k}+tb^{-\ell}\frac{\sum_{k=1}^{N}c_{\ell+k}b^{N-k}}{b^N-1},
\end{eqnarray*}
which implies that $c_\ell\ne c_{\ell+N}$ and
\[t\frac{\sum_{k=1}^{N}c_{\ell+k}b^{N-k}}{b^N-1}\in \Z.\]
If the above is zero, then  $x_0=t\sum_{k=1}^\ell c_kb^{-k}$, which is impossible because that
$\gcd(t, b)=1$, all elements of $\cC$ are in different coset of $\Z/b\Z$ and $0\in \cC$. Hence (ii)
follows.
\ez
\begin{rem}\label{nronne}
In Proposition \ref{prop2.2}, (ii) and (iii) are equivalent without the condition $\gcd(t,b)=1$.
Moreover, the condition $\gcd(t,b)=1$ is not used in (i)$\Rightarrow$(ii).
\end{rem}

\begin{rem}\label{nonne}
In Proposition \ref{prop2.2},  if we restrict $\cC$ to the nonnegative integers, then (i), (ii) and
(iii) are equivalent without the condition $\gcd(t,b)=1$. Indeed, in the proof of
(ii)$\Rightarrow$(i) in Proposition \ref{prop2.2}, by \eqref{eqqtb}, we have $s<0$ when $\cC\subset
\Z_{\geq0}$ and hence $s\notin\Lambda(b,t\cC)~(\subset\Z_{\geq0})$. Together with Remark
\ref{nronne}, we obtain the desired result.
\end{rem}

\begin{rem}
We say that a sequence $\{x_k\}_{k=1}^N$ is an integer cycle w.r.t. $(b, t\cC)$ if (iii) in
Proposition \ref{prop2.2} holds. Equivalently, a sequence $\{x_k\}_{k=1}^\infty$ is an integer cycle
w.r.t. $(b, t\cC)$ if there exists $N$ such that $\{x_k\}_{k=1}^N$ is an integer cycle w.r.t. $(b,
t\cC)$ and it is a period sequence with period $N$.
\end{rem}

\begin{coro}
 Let $t\in\N$ be an incomplete integer w.r.t. $(b, \cC)$ with $\gcd(t,b)=1$ and let $\gcd(k,b)=1$,
 then so it $kt$ for $k\in\N$. In particular, if $1$ is incomplete, then all $t\in\N$ with
 $\gcd(t,b)=1$ are incomplete.
\end{coro}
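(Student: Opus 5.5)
The plan is to use the characterization (i)$\Leftrightarrow$(ii) of Proposition \ref{prop2.2}. Since $\gcd(t,b)=1$ and $\gcd(k,b)=1$, we have $\gcd(kt,b)=1$, so the proposition applies both to $t$ and to $kt$. Incompleteness of $kt$ will follow once we exhibit a cycle witnessing condition (ii) for $kt$.

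Since $t$ is incomplete w.r.t. $(b,\cC)$, condition (ii) gives $c_1,\dots,c_N\in\cC$ with
\[
m:=t\frac{c_1+c_2b+\cdots+c_Nb^{N-1}}{b^N-1}\in\Z\setminus\{0\}.
\]
Multiplying by $k$ and keeping the same digits yields
\[
kt\frac{c_1+c_2b+\cdots+c_Nb^{N-1}}{b^N-1}=km\in\Z\setminus\{0\},
\]
because $k\ge 1$ and $m\neq 0$. Condition (ii) asks for the minimal $N$ for which such a relation holds. Minimality is not a real constraint here: since some length works, a minimal length exists among all choices of digits. Hence (ii) holds for $kt$, and applying (ii)$\Rightarrow$(i) of Proposition \ref{prop2.2} shows that $kt$ is incomplete.

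For the particular case, take $t=1$. Any $t'\in\N$ with $\gcd(t',b)=1$ can be written as $t'=k\cdot 1$ with $k=t'$ and $\gcd(k,b)=1$, so the first part applies directly.

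The only step needing care is the minimality clause in (ii). The fix is to read (ii) as ``there is some $N$ with such digits,'' which is equivalent by well-ordering. Alternatively, one can pass through (iv): if $x\in T(b,t\cC)\cap\Z\setminus\{0\}$, then $kx\in T(b,kt\cC)\cap\Z\setminus\{0\}$, and (iv)$\Rightarrow$(i) gives the conclusion immediately. This second route is perhaps the cleanest.
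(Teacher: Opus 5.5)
Your proof is correct and is the paper's argument: the paper's one-line proof also invokes (ii) of Proposition \ref{prop2.2}. Multiplying the witness for $t$ by $k$ gives a witness for $kt$, and $\gcd(kt,b)=1$ lets you apply (ii)$\Rightarrow$(i). Your treatment of the minimality clause is fine, as is the alternative route via $T(b,kt\cC)=kT(b,t\cC)$ in (iv).
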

\proof
The assertion follows from (ii) of Proposition \ref{prop2.2} immediately.
\ez
\begin{coro}\label{cororgx}
If $k(b-1)\in\cC$ for some $k\in\Z\setminus\{0\}$, then any $t\in\N$ with $\gcd(t,b)=1$ is incomplete
w.r.t. $(b, \cC)$.
\end{coro}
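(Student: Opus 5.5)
Under the hypothesis $k(b-1)\in\cC$ with $k\neq 0$, I will verify criterion (ii) of Proposition \ref{prop2.2} with the shortest possible cycle, $N=1$. The standing assumptions of that proposition are exactly those of the corollary: $0\in\cC$, the elements of $\cC$ lie in distinct cosets of $\Z/b\Z$, and $\gcd(t,b)=1$. Under these assumptions, (ii) is equivalent to (i), which says that $t$ is incomplete. So it is enough to produce digits $c_1,\dots,c_N\in\cC$ for which the quantity in (ii) is a nonzero integer.

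Take $N=1$ and $c_1=k(b-1)\in\cC$. Then
\[
t\,\frac{c_1}{b^1-1}=t\,\frac{k(b-1)}{b-1}=tk ,
\]
and this lies in $\Z\setminus\{0\}$ because $t\in\N$ and $k\neq 0$. Since $N=1$ is the least admissible length, the minimality clause in (ii) holds trivially. Hence (ii) is satisfied, and Proposition \ref{prop2.2} gives (i): every $t\in\N$ with $\gcd(t,b)=1$ is incomplete w.r.t.\ $(b,\cC)$.

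As a concrete check, one can also read off the offending element of $\Gamma(b,t\cC)\setminus\La(b,t\cC)$ from \eqref{eqqtb}: $s=-tk$ satisfies
\[
s-tc_1=-tk-tk(b-1)=-tkb=bs .
\]
So the constant sequence $s_j=-tk$ with constant digit $c_1$ shows $s\in\Gamma(b,t\cC)$. Equivalently, $-tk$ is a nonzero integer fixed point, which also gives criterion (iii) with $N=1$.

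There is no real obstacle here. The only point that needs care is that this corollary should not depend on the sign restriction $\cC\subset\Z_{\ge0}$ from Remark \ref{nonne}. It does not, because the hypothesis $\gcd(t,b)=1$ lets us use Proposition \ref{prop2.2} in full, where $s\notin\La(b,t\cC)$ is obtained from the coset argument rather than from positivity.
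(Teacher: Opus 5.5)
Your argument is the paper's proof: take $N=1$ and $c_1=k(b-1)$, note $t\,c_1/(b-1)=tk\in\Z\setminus\{0\}$, and invoke (ii)$\Rightarrow$(i) of Proposition \ref{prop2.2}. One slip in your side remark: for criterion (iii), with recursion $x\mapsto (x+tc_1)/b$, the nonzero fixed point is $x_1=tk$, not $-tk$. The value $-tk$ is the fixed point of $s\mapsto (s-tc_1)/b$ used for (i), so the slip does not affect the proof.
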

\proof Assume $k(b-1) \in \mathcal{C}$. For any $t \in \mathbb{N}$ we have
$$t\frac{k(b-1)}{b-1}=tk\in\Z\setminus\{0\}.$$
Hence the assertion follows by (ii) of Proposition \ref{prop2.2}.\ez

By Remark \ref{nonne}, we have the following two better results when $\cC\subset \Z_{\geq0}$.
\begin{coro}
 Let $t\in\N$ be an incomplete integer w.r.t. $(b, \cC)$. If $\cC\subset \Z_{\geq0}$, then so it $kt$
 for $k\in\N$. In particular, if $1$ is incomplete, then all $\N$ are incomplete.
\end{coro}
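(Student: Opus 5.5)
The plan is to work with characterization (ii) of Proposition \ref{prop2.2}. By Remark \ref{nonne}, when $\cC\subset\Z_{\geq0}$, conditions (i) and (ii) are equivalent without assuming $\gcd(t,b)=1$. So it suffices to show that property (ii) passes from $t$ to $kt$ for every $k\in\N$.

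Since $t$ is incomplete, Remark \ref{nonne} gives $c_1,\dots,c_N\in\cC$ with
\[
q:=t\frac{c_1+c_2b+\cdots+c_Nb^{N-1}}{b^N-1}\in\Z\setminus\{0\}.
\]
Multiplying by $k\in\N$, we get
\[
kt\frac{c_1+c_2b+\cdots+c_Nb^{N-1}}{b^N-1}=kq\in\Z\setminus\{0\},
\]
because $k\ne0$. Thus the same digits $c_1,\dots,c_N$ witness (ii) for $kt$. The minimality of $N$ in (ii) is harmless: if some witness exists, then one exists with minimal length, so we may simply pass to a minimal one. Applying Remark \ref{nonne} again, now for $kt$, shows that $kt$ is incomplete w.r.t. $(b,\cC)$.

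For the ``in particular'' part, take $t=1$. Every $k\in\N$ equals $k\cdot 1$ and is therefore incomplete.

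The only delicate point is justifying that (ii)$\Rightarrow$(i) holds without coprimality. The argument in Proposition \ref{prop2.2} used $\gcd(t,b)=1$ precisely to show that $s\notin\Lambda(b,kt\cC)$. Remark \ref{nonne} replaces this with a sign argument. The element
\[
s=-kt\frac{c_1+\cdots+b^{N-1}c_N}{b^N-1}
\]
is a nonzero integer and is negative, since all $c_j\ge0$, $b\ge 2$, and not all $c_j$ vanish (the ratio is nonzero). On the other hand, $\Lambda(b,kt\cC)\subset\Z_{\geq0}$. I will simply invoke that remark, so nothing beyond the one-line scaling computation is needed.
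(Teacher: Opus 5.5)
Your proposal is correct and follows the paper's own route. The paper also derives this corollary from Remark \ref{nonne}, i.e.\ from the $\gcd$-free equivalence of (i) and (ii) when $\cC\subset\Z_{\geq0}$, obtained via the sign argument. It combines this with the same observation you make, that multiplying a witness in (ii) by $k$ gives a witness for $kt$, as in the preceding corollary.
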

\begin{coro}\label{corortd}
If $k(b-1)\in\cC$ for some $k\in\Z\setminus\{0\}$ and $\cC\subset \Z_{\geq0}$, then any $t\in\N$ is
incomplete w.r.t. $(b, \cC)$.
\end{coro}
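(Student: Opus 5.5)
This follows Corollary \ref{cororgx}, but Remark \ref{nonne} lets us drop the hypothesis $\gcd(t,b)=1$. Fix $t\in\N$ and $k\neq 0$ with $c:=k(b-1)\in\cC$. Since $\cC\subset\Z_{\geq0}$ and $c\neq 0$, we must have $k\geq 1$.

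Take $N=1$ and $c_1=c$ in condition (ii) of Proposition \ref{prop2.2}. Then
$$t\frac{c_1}{b-1}=tk\in\Z\setminus\{0\},$$
and $N=1$ is automatically the minimal length. So (ii) holds for $t$.

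It remains to pass from (ii) to (i) without assuming $\gcd(t,b)=1$. By Remark \ref{nonne}, when $\cC\subset\Z_{\geq0}$ the equivalence (ii)$\Leftrightarrow$(i) holds for every $t$. Concretely, the witness from \eqref{eqqtb} is $s=-tk<0$. It satisfies $s=tc+bs$, so the constant sequence $s_n=s$ with $c_n=c$ shows $s\in\Gamma(b,t\cC)$. On the other hand $\Lambda(b,t\cC)\subset\Z_{\geq0}$, so $s\notin\Lambda(b,t\cC)$. Hence $\Gamma(b,t\cC)\neq\Lambda(b,t\cC)$, and $t$ is incomplete.

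There is no real obstacle. The one point to check is that nonnegativity of $\cC$ forces the witness $s$ to be negative, which replaces the coprimality argument used in Proposition \ref{prop2.2}.
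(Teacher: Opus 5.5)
Your proof is correct and follows the paper's route. The paper derives this corollary by combining the one-digit computation of Corollary \ref{cororgx} with Remark \ref{nonne}, and your explicit witness $s=-tk<0$ in $\Gamma(b,t\cC)\setminus\Lambda(b,t\cC)$ is the concrete instance of that remark. Your observation that $\cC\subset\Z_{\geq0}$ forces $k\geq 1$ is what makes $s$ negative.
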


\begin{prop}\label{prop2.10}
Let  all elements of $\cC$ be in different cosets of $\Z/b\Z$. If $\{x_k\}_{k=1}^\infty$ and
$\{y_k\}_{k=1}^\infty$ are two nontrivial integer cycle w.r.t. $(b, t\cC)$, then either $\{x_k: k\ge
1\}=\{y_k: k\ge 1\}$ or $\{x_k: k\ge 1\}\cap\{y_k: k\ge 1\}=\emptyset$.
\end{prop}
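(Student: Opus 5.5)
The key observation is that an integer cycle w.r.t. $(b,t\cC)$ is a deterministic orbit once one of its terms is known. Because the elements of $\cC$ lie in different cosets of $\Z/b\Z$, the digits of $t\cC$ are distinct modulo $b$ (when $\gcd(t,b)=1$), so the digit attached to each term is forced.

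\textbf{Step 1 (forward determinism).} Let $x\in\Z$ be a term of an integer cycle $\{x_k\}$ w.r.t. $(b,t\cC)$. The recursion $x_{k+1}=(x_k+tc_k)/b\in\Z$ requires $tc_k\equiv -x_k \pmod b$. I claim there is at most one $c\in\cC$ with $tc\equiv -x\pmod b$. Indeed, if $tc\equiv tc'\pmod b$, then $c\equiv c'\pmod b$ because $t$ is invertible modulo $b$, and the coset hypothesis gives $c=c'$. Hence the successor of $x$ in any integer cycle is uniquely determined by $x$ alone, via a map $F(x)=(x+tc(x))/b$.

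\textbf{Step 2 (the sets are orbits).} By the remark defining integer cycles, $\{x_k\}$ is periodic with some period $N$. The set $\{x_k:k\ge1\}$ is therefore exactly the forward orbit $\{x_1,F(x_1),\dots,F^{N-1}(x_1)\}$. Moreover, each of its elements $x_j$ returns to itself under $F^N$, so the orbit of any element of the cycle is the whole cycle set.

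\textbf{Step 3 (conclusion).} Suppose $z\in\{x_k\}\cap\{y_k\}$. By Step 2, the forward $F$-orbit of $z$ equals $\{x_k:k\ge1\}$. By the same argument applied to the cycle $\{y_k\}$, that orbit also equals $\{y_k:k\ge1\}$. Hence the two sets coincide, and otherwise they are disjoint.

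\textbf{Main obstacle.} The only delicate point is Step 1, which needs $\gcd(t,b)=1$ for the residue argument. If this is not implicitly assumed, I would instead note $x_{k+1}b-x_k=tc_k$. If $tc\equiv tc'\pmod b$ with both $x+tc$ and $x+tc'$ divisible by $b$, then $b$ divides $t(c-c')$, and without coprimality this need not force $c=c'$. So I would state and use the standing assumption $\gcd(t,b)=1$ from Proposition \ref{prop2.2}. Under that assumption the argument is pure determinism of the successor map, and no further estimates are needed.
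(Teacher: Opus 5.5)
Your argument is correct when $\gcd(t,b)=1$, which is the only setting in which the paper later uses the proposition (Theorem \ref{theo2.9}). The statement itself, however, only assumes that the elements of $\cC$ lie in distinct cosets modulo $b$. Without coprimality, Step 1 is false, not merely unjustified. Take $b=t=2$ and $\cC=\{0,1\}$: the point $x=2$ lies on the nontrivial cycle $\{2\}$ with digit $1$, yet $2+t\cdot 0$ is also divisible by $b$. So the residue condition does not determine the successor, and your map $F$ is not well defined. Importing $\gcd(t,b)=1$ therefore proves a strictly weaker statement than the one given.

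The paper avoids local determinism and uses the periodicity globally. Take a common period $N$ of the two cycles and iterate $bx_{k+1}=x_k+tc_k$ around it to get the closed form \eqref{eq2.1}. Then $x_m=y_n$ becomes
\[
t\frac{\sum_{k=0}^{N-1}c_{m+k}b^k}{b^N-1}=t\frac{\sum_{k=0}^{N-1}d_{n+k}b^k}{b^N-1}.
\]
Here $t$ cancels, leaving $\sum_{k=0}^{N-1}(c_{m+k}-d_{n+k})b^k=0$. Reducing modulo $b$ repeatedly, using only that $\cC$ (not $t\cC$) has distinct residues, forces $c_{m+k}=d_{n+k}$ for all $k$. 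Hence $x_{m+i}=y_{n+i}$ for all $i\ge 0$, and periodicity gives equality of the sets.

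The point is that the digit at a cycle point is pinned down by the whole period, not by the local residue of $x$. That is the idea you need in order to drop the coprimality assumption. Your successor-map picture is cleaner and is perfectly fine in the coprime setting, but the closed-form comparison is what proves the proposition as stated.
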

\proof Suppose that $x_{k+1}=\frac{x_k+tc_k}b$, $y_{k+1}=\frac{y_k+td_k}b$ and $x_m=y_n$ for some $m,
n\in\N$. Choose $N$ to be a common period of the two sequences, similar to \eqref{eq2.1}, we have
\begin{eqnarray*}
  x_m&=&t\frac{c_{m}+bc_{m+1}+\cdots+c_{m+N-1}b^{N-1}}{b^N-1}=y_n\\
  &=&t\frac{d_{n}+bd_{n+1}+\cdots+d_{n+N-1}b^{N-1}}{b^N-1},
\end{eqnarray*}
which implies $c_{m+k}=d_{m+k}$ for $k=0,1,\ldots,N-1$ and thus $x_{m+i}=y_{n+i}$ for all $i\geq1$ by
the assumption. Then $\{x_k: k\ge 1\}=\{y_k: k\ge 1\}$ by the periodicity of the two sequences. We
finish the proof.
\ez
\begin{thm}\label{theo2.9}
Let  all elements of $\cC$ be in different cosets of $\Z/b\Z$. Let $t\in \N$ with $\gcd(t,b)=1$. Then
$T(b, t\cC)\cap\Z$ is the disjoint union of all integer cycles w.r.t. $(b, t\cC)$ (including the
trivial case $\{0\}$).
\end{thm}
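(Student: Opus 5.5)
We prove two inclusions and then use Proposition \ref{prop2.10} for disjointness. Throughout, an integer cycle $\{x_k\}$ w.r.t. $(b,t\cC)$ is extended periodically as in the remark following Proposition \ref{prop2.2}.

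First, every element of an integer cycle lies in $T(b,t\cC)\cap\Z$. If $\{x_k\}$ is a cycle with $x_{k+1}=(x_k+tc_k)/b$, then $x_k=bx_{k+1}-tc_k$. The relation \eqref{eq2.1} gives
\[x_l=t\frac{c_l+bc_{l+1}+\cdots+b^{N-1}c_{N+l-1}}{b^N-1}.\]
Expanding $\frac{1}{b^N-1}=\sum_{j\ge1}b^{-jN}$ writes $x_l$ as $\sum_{j\ge1}t c'_j b^{-j}$, where $\{c'_j\}$ is the periodic sequence $c_{l+N-1},\dots,c_l$ read in reverse. Indeed, the block $c_l+\cdots+b^{N-1}c_{N+l-1}$ times $b^{-N}$ has digits $c_{N+l-1}$ at $b^{-1}$, down to $c_l$ at $b^{-N}$. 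Hence $x_l\in T(b,t\cC)$. The trivial cycle $\{0\}$ also lies in $T(b,t\cC)$, since $0=\sum t\cdot0\cdot b^{-j}$ with $0\in\cC$.

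Conversely, we show every $x_0\in T(b,t\cC)\cap\Z$ lies on some integer cycle. Take $x_0\neq0$ (the case $x_0=0$ is the trivial cycle) and write $x_0=t\sum_{k\ge1}c_kb^{-k}$. Rather than pass to the tail as in the proof of (iv)$\Rightarrow$(ii) of Proposition \ref{prop2.2}, which only produces some other nonzero integer on a cycle, we argue directly at $x_0$ by following the backward map. Define $z_0=x_0$ and $z_{n}=t\sum_{k\ge1}c_{n+k}b^{-k}$, so that $z_{n-1}=(tc_n+z_n)/b$, i.e. $z_n=bz_{n-1}-tc_n$. By induction every $z_n$ is an integer. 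Each $z_n$ also lies in $T(b,t\cC)$, which is bounded, so $\{z_n\}$ takes finitely many values.

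Given $z_n$, the digit $c_{n+1}$ is forced: $z_n=(tc_{n+1}+z_{n+1})/b$ with $z_{n+1}\in\Z$ means $tc_{n+1}\equiv -z_{n+1}\cdots$. Cleaner is the forward relation $z_{n+1}=bz_n-tc_{n+1}$, where $z_{n+1}$ is an integer in $T(b,t\cC)$. So $z_{n+1}\equiv -tc_{n+1}\pmod b$, and since $\gcd(t,b)=1$ and the elements of $\cC$ lie in distinct cosets, $c_{n+1}$ is determined by $z_{n+1}$ modulo $b$. This is the key step: we must show $x_0$ itself recurs, not merely that the orbit is eventually periodic. Reading the recursion backwards, $z_n$ is determined by $z_{n+1}$, namely $z_n=(z_{n+1}+tc_{n+1})/b$ with $c_{n+1}$ determined by $z_{n+1}\bmod b$. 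So the map $z_n\mapsto z_{n+1}$ is injective on the finite set of visited values. An eventually periodic orbit of an injective map is purely periodic, so $z_N=z_0=x_0$ for some $N$.

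Now set $x_k=z_{N-k+1}$, indices mod $N$. Then $x_{k+1}=(x_k+tc)/b$ with the appropriate digits, so this is an integer cycle in the sense of Proposition \ref{prop2.2}(iii) containing $x_0$. It is nontrivial because $x_0\neq0$, and the zero cycle is closed under the backward step. Finally, Proposition \ref{prop2.10} shows that distinct cycles are disjoint, which gives the disjoint union.

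The main obstacle is this injectivity and pure-periodicity argument. It is the only place where $\gcd(t,b)=1$ and the coset condition are genuinely used, and some care is needed with index direction so that the resulting sequence matches the orientation in (iii).
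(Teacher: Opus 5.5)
Your proof is correct and rests on the same mechanism as the paper's. The paper takes the code of $x_0$ to be eventually periodic with minimal preperiod $\ell$, and shows that $\ell\ge 1$ would force $c_\ell\equiv c_{\ell+N}\pmod{b}$. That is exactly your observation that the digit $c_n$ is determined by the integer tail $z_n$ modulo $b$ (via $\gcd(t,b)=1$ and the coset condition), so no preperiod can occur; disjointness then comes from Proposition~\ref{prop2.10} in both versions.

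Your pigeonhole-plus-backward-iteration version is slightly tighter. It needs neither uniqueness nor eventual periodicity of the code, and it does not use the paper's normalization $\gcd(\alpha,t)=1$. Do phrase the key step as ``$z_n=z_m$ for some $n<m$, then pull back $n$ times'', not as an eventually periodic orbit of the forward step $z_n\mapsto z_{n+1}$: that forward step is not a priori a well-defined function of $z_n$.
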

\proof If $t$ is complete w.r.t. $(b, \cC)$, then $T(b, t\cC)\cap\Z=\{0\}$ and the assertion follows
by Proposition \ref{prop2.2}.  If $t$ is incomplete w.r.t. $(b, \cC)$, we use the formula in the
proof of Proposition \ref{prop2.2} from (iv) to (ii), for any $x_0\in T(b, t\cC)\cap
\Z\setminus\{0\}$,  there exist the smallest integers $\ell\ge 0$ and $N\ge 1$ such that
\begin{eqnarray*}
  x_0=t\sum_{k=1}^\ell c_kb^{-k}+tb^{-\ell}\frac{\sum_{k=1}^{N}c_{\ell+k}b^{N-k}}{b^N-1}
\end{eqnarray*}
and $c_\ell\ne c_{\ell+N}$
and
$$\alpha:=t\frac{\sum_{k=1}^{N}c_{\ell+k}b^{N-k}}{b^N-1}\in\Z\setminus\{0\}.$$
  Clearly we can assume that $\gcd(\alpha, t)=1$. Then $b^N\equiv 1\pmod t$ and thus $O_b(t)\mid N$.
  If $\ell\ge 1$, further we have
\begin{eqnarray*}
  \frac{t\frac{\sum_{k=1}^{N}c_{\ell+k}b^{N-k}}{b^N-1}}b+\frac{tc_\ell}b=b^{\ell-1}x_0-t\sum_{k=1}^{\ell-1}
  c_kb^{\ell-1-k},
\end{eqnarray*}
which is equivalent to that
\begin{eqnarray*}
  \frac{\sum_{k=1}^{N}c_{\ell+k}b^{N-k}+c_\ell(b^N-1)}b=b^{\ell-1}x_0\frac{b^N-1}t-(b^N-1)\sum_{k=1}^{\ell-1}
  c_kb^{\ell-1-k}\in\Z.
\end{eqnarray*}
This forces $c_{\ell+N}=c_\ell\pmod b$. Thus $c_{\ell+N}=c_\ell$, which yields a contradiction. Hence
$\ell=0$ and $x_0$ is a cycle point w.r.t. $(b, t\cC)$. The union is disjoint by Proposition
\ref{prop2.10}. The converse is trivial.\ez
\begin{rem}
 To guarantee that $t\cC$ is a sub-representation of the  $\Z/b\Z$, we always assume that $\gcd(t,
 b)=1$. And in order to avoid the trivial case, we always assume that $k(b-1)\not\in\cC$ for any
 $k\in \Z\setminus\{0\}$.
\end{rem}
\begin{prop}\label{prop2.9}
Let $t\in\N$ be a complete number w.r.t. $(b, \cC)$ with $\gcd(t,b)=1$. If $p_1, p_2, \ldots, p_k$
are different prime factors of $b$, then $tp_1^{\alpha_1}p_2^{\alpha_2}\cdots{}\allowbreak p_k^{\alpha_k}$ is
complete for each $(\alpha_1, \ldots, \alpha_k)\in\Z_{\geq0}^k$.
\end{prop}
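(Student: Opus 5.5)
We argue by contraposition. Put $m=p_1^{\alpha_1}p_2^{\alpha_2}\cdots p_k^{\alpha_k}$ and $t'=tm$, and suppose that $t'$ is incomplete w.r.t. $(b,\cC)$. We will show that $t$ is then incomplete, contradicting the hypothesis. The point is that Proposition \ref{prop2.2} cannot be applied to $t'$ in full, because $\gcd(t',b)\neq 1$ in general. By Remark \ref{nronne}, however, the implication (i)$\Rightarrow$(ii) of Proposition \ref{prop2.2} does not use $\gcd(t,b)=1$, so it holds for $t'$.

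First, from the incompleteness of $t'$ and Remark \ref{nronne} we get some $N\ge1$ and digits $c_1,\ldots,c_N\in\cC$ with
$$tm\,\frac{c_1+c_2b+\cdots+c_Nb^{N-1}}{b^N-1}\in\Z\setminus\{0\}.$$

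Second, we remove the factor $m$ from this fraction. Every prime factor of $m$ is one of the $p_i$, so it divides $b$. Such a prime therefore does not divide $b^N-1$, since $b^N-1\equiv -1 \pmod{p_i}$. Hence $\gcd(m,b^N-1)=1$. Write
$$t\,\frac{c_1+\cdots+c_Nb^{N-1}}{b^N-1}=\frac{A}{b^N-1}, \qquad A=t\,(c_1+\cdots+c_Nb^{N-1})\in\Z.$$
By the first step, $b^N-1$ divides $mA$. Since $m$ is coprime to $b^N-1$, it follows that $b^N-1$ divides $A$. So $t\frac{c_1+\cdots+c_Nb^{N-1}}{b^N-1}$ is an integer. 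It is nonzero, because $m$ times it is nonzero.

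Third, we pass to a minimal $N$. The set of $N$ for which condition (ii) of Proposition \ref{prop2.2} holds for $t$ is nonempty, so it has a least element, and (ii) holds for $t$ with that minimal $N$. Since $\gcd(t,b)=1$ and the elements of $\cC$ lie in distinct cosets of $\Z/b\Z$, the implication (ii)$\Rightarrow$(i) of Proposition \ref{prop2.2} applies. It shows that $t$ is incomplete, which is the desired contradiction. Therefore $tm$ is complete for every $(\alpha_1,\ldots,\alpha_k)\in\Z_{\geq0}^k$.

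There is no real obstacle in this argument. The one point requiring care is the use of Remark \ref{nronne}: we may only invoke the gcd-free direction (i)$\Rightarrow$(ii) for the non-coprime integer $tm$, and we return to the coprime integer $t$ before using the reverse direction (ii)$\Rightarrow$(i).
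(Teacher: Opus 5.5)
Your proof is correct and follows the same route as the paper. The paper's proof is a one-liner: incompleteness of $tp_1^{\alpha_1}\cdots p_k^{\alpha_k}$ yields, via (ii) of Proposition~\ref{prop2.2}, incompleteness of $t$. You have made explicit the two points that line leaves implicit: the direction (i)$\Rightarrow$(ii) needs no coprimality (Remark~\ref{nronne}), and $\gcd(p_1^{\alpha_1}\cdots p_k^{\alpha_k},\, b^N-1)=1$ lets you cancel the extra factor before applying (ii)$\Rightarrow$(i) to $t$.
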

\proof If $tp_1^{\alpha_1}p_2^{\alpha_2}\cdots p_k^{\alpha_k}$ is incomplete, by (ii) of Proposition
\ref{prop2.2}, $t$ is incomplete, which contradicts the assumption of $t$. We finish the proof. \ez
\begin{defi}
Let $t\in\N$ and $1$ is complete w.r.t. $(b, \cC)$. The $t$ is called a primitively incomplete
integer w.r.t. $(b, \cC)$ if $t$ is incomplete and any proper factor $t'\mid t, t'\ne t$, is
complete.
\end{defi}
\begin{prop}\label{prop2.12}
Let $t\in\N$ be a primitive incomplete integer w.r.t.  $(b, \cC)$. Then the following statements
hold:\\
  (1) $\gcd(t, b)=1$;\\
  (2) Let $\{x_k\}_{k=1}^\infty$ be an integer cycle w.r.t. $(b, t\cC)$. Then $\gcd(t, x_k)=1$ for
  all $k$;\\
  (3) Suppose that $T(b, \cC-\cC)\cap \Z=\{0\}$. Let $\{x_k\}_{k=1}^\infty$ be an integer cycle
  w.r.t. $(b, t\cC)$. Then the smallest period of it is $O_b(t)$.
\end{prop}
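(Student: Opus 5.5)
Each of the three items comes from the characterizations in Proposition \ref{prop2.2}. The recurring device is to divide $t$, or the cycle, by a suitable factor and contradict the primitivity of $t$.

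\emph{(1).} Write $t=t_0m$, where every prime factor of $m$ divides $b$ and $\gcd(t_0,b)=1$. By Remark \ref{nronne}, the implication (i)$\Rightarrow$(ii) of Proposition \ref{prop2.2} holds without the coprimality assumption. Hence there are $c_1,\dots,c_N\in\cC$ with
\[
t_0m\,\frac{c_1+c_2b+\cdots+c_Nb^{N-1}}{b^N-1}\in\Z\setminus\{0\}.
\]
Since $\gcd(m,b^N-1)=1$, the number $t_0\frac{c_1+\cdots+c_Nb^{N-1}}{b^N-1}$ is already a nonzero integer. Because $\gcd(t_0,b)=1$, the implication (ii)$\Rightarrow$(i) applies, so $t_0$ is incomplete. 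If $m>1$, then $t_0$ is a proper factor of $t$ that is incomplete, contradicting primitivity. (If $t_0=1$, this instead contradicts the standing assumption that $1$ is complete.) Therefore $m=1$.

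\emph{(2).} Let $\{x_k\}$ be an integer cycle with $x_{k+1}=(x_k+tc_k)/b$, and suppose a prime $p$ divides both $t$ and $x_k$ for some $k$. Since $bx_{k+1}=x_k+tc_k$ and $p\nmid b$ by (1), we get $p\mid x_{k+1}$. Going around the cycle, $p$ divides every $x_j$. Then $\{x_j/p\}$ satisfies
\[
\frac{x_j/p+(t/p)c_j}{b}=\frac{x_{j+1}}{p},
\]
so it is a nontrivial integer cycle for $(b,(t/p)\cC)$. By (iii)$\Rightarrow$(ii)$\Rightarrow$(i) of Proposition \ref{prop2.2}, which applies because $\gcd(t/p,b)=1$, the proper factor $t/p$ is incomplete. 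This is a contradiction.

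\emph{(3).} Let $N$ be the smallest period of the cycle and put $M=O_b(t)$.

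First, $M\mid N$. By \eqref{eq2.1} we have $x_1(b^N-1)=t\,(c_1+\cdots+c_Nb^{N-1})$. Since $\gcd(x_1,t)=1$ by (2), it follows that $t\mid b^N-1$, so $M\mid N$.

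Conversely, the period divides $M$. Using the periodic expansions,
\[
x_1=t\sum_{j\ge1}c_jb^{-j},\qquad x_{M+1}=t\sum_{j\ge1}c_{M+j}b^{-j}.
\]
Iterating the recursion gives
\[
b^Mx_{M+1}-x_1=t\sum_{k=1}^Mc_kb^{k-1}\equiv0\pmod t.
\]
Since $b^M\equiv1\pmod t$, this yields $x_{M+1}\equiv x_1\pmod t$. Hence
\[
\frac{x_1-x_{M+1}}{t}=\sum_{j\ge1}(c_j-c_{M+j})b^{-j}
\]
is an integer lying in $T(b,\cC-\cC)$. By hypothesis it equals $0$, so $x_{M+1}=x_1$. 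The same argument started at any index $k$ gives $x_{k+M}=x_k$, so $N\mid M$. Together with $M\mid N$ this gives $N=M$.

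The main subtle point is (1): the implication (ii)$\Rightarrow$(i) needs coprimality, which is why all prime factors of $b$ must be stripped from $t$ at once rather than one at a time.
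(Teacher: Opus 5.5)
Your proof is correct and is essentially the paper's argument with the details filled in. In (1) you use (i)$\Rightarrow$(ii) of Proposition~\ref{prop2.2} together with $\gcd(d,b^N-1)=1$ for $d\mid b$; stripping all primes of $b$ from $t$ at once is exactly what makes the return step (ii)$\Rightarrow$(i) legitimate. In (2) you divide out a common factor of $t$ and a cycle point, and in (3) you combine $x_{O_b(t)+1}\equiv x_1\pmod t$ with $T(b,\cC-\cC)\cap\Z=\{0\}$, as the paper does.

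There is one harmless slip in (3). With $x_{k+1}=(x_k+tc_k)/b$, the correct expansion is $x_k=t\sum_{j\ge1}c_{k-j}b^{-j}$ (indices mod $N$, digits read backwards), not $t\sum_{j\ge1}c_{k-1+j}b^{-j}$. Since you only use that $x_1,x_{M+1}\in T(b,t\cC)$, the conclusion stands.
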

\proof (1) follows simply from (ii) of Proposition \ref{prop2.2} and $\gcd(d, b^{n}-1)=1$ for any
$n\geq 1$ if $d\mid b$. And (2) follows by (ii) of Proposition \ref{prop2.2} and \eqref{eq2.1}.

Now we prove (3). Let $\{x_k\}_{k=1}^\infty$ be an integer cycle w.r.t. $(b, t\cC)$ with the smallest
period $N$. By the proof of (2) we have $O_b(t)\mid N$. From $x_2=\frac{x_1+tc_1}b$ we have $x_2=b^{O_b(t)-1}x_1\pmod t$. Then $x_{O_b(t)+1}=x_1\pmod t$.
Notice that $x_{O_b(t)+1}, x_1\in T(b, t\cC)$. Then there exists $\alpha\in\Z$ such that
$$\alpha=\frac{x_{O_b(t)+1}-x_1}t\in T(b, \cC-\cC).$$
This implies that $x_{O_b(t)+1}=x_1$ and (3) follows.\ez

\begin{rem} We guess that the assumption $T(b, \cC-\cC)\cap \Z=\{0\}$ in (3) of
Proposition \ref{prop2.12} is redundant.
\end{rem}

According to Proposition \ref{prop2.12}, we have the following definition.
\begin{defi}
  $t\in\N$ is called primitively complete w.r.t. $(b, \cC)$ if it is complete and $\gcd(t, b)=1$.
\end{defi}

\begin{thm}\label{theonotd}
Let $b\in \Z_{\geq2}$ and $0\in\cC\subset\Z$ be a sub-representation of the quotient group $\Z/b\Z$.
Suppose $t=1$ is complete w.r.t. $(b, \cC)$. If $\gcd(t,b)=1$ or $\cC\subset \Z_{\geq0}$, then there
are infinitely many primitively incomplete integers w.r.t. $(b, \cC)$.
\end{thm}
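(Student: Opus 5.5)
Our plan is to produce infinitely many incomplete integers coprime to $b$ and then pass to primitive ones. The natural candidates are divisors of numbers $b^N-1$ times suitable digit combinations, via criterion (ii) of Proposition \ref{prop2.2}. The prototype is the integer $t=b^N-1$: for any nonzero $c\in\cC$ we have
\[t\frac{c}{b^N-1}=c\in\Z\setminus\{0\}.\]
So every $t_N:=b^N-1$ is incomplete. This uses (ii)$\Rightarrow$(i) of Proposition \ref{prop2.2}, which needs $\gcd(t_N,b)=1$; that holds automatically. In the case $\cC\subset\Z_{\ge0}$ we instead appeal to Remark \ref{nonne}, which removes the coprimality hypothesis altogether. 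We assume $\cC\ne\{0\}$, since otherwise everything is trivial.

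Next we pass from incomplete integers to primitively incomplete ones. Given an incomplete $t_N$, choose a divisor $t'$ of $t_N$ that is incomplete and minimal under divisibility, so that every proper factor of $t'$ is complete. Such a $t'$ exists because the divisor set is finite and $t_N$ itself is incomplete. Moreover $t'\neq1$, since $1$ is complete by hypothesis. Then $t'$ is primitively incomplete, and $t'\mid b^N-1$, so $\gcd(t',b)=1$, consistent with Proposition \ref{prop2.12}(1).

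It remains to show that infinitely many distinct $t'$ arise; this is the main obstacle. Suppose instead there were only finitely many primitively incomplete integers, say $q_1,\dots,q_r$. Every incomplete integer $t$ has an incomplete divisor that is minimal among its divisors, hence equal to some $q_i$. Thus every $b^N-1$ is divisible by some $q_i$, where each $q_i>1$ and $\gcd(q_i,b)=1$. To reach a contradiction, we take $N$ so that $b^N-1$ avoids all of these moduli. Set $L=\operatorname{lcm}(O_b(q_1),\dots,O_b(q_r))$. Since $q_i\mid b^N-1$ is equivalent to $O_b(q_i)\mid N$, it suffices to find $N\ge1$ with $O_b(q_i)\nmid N$ for every $i$. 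The choice $N=1$ fails only if some $O_b(q_i)=1$, i.e.\ $q_i\mid b-1$. So the difficulty is the divisors of $b-1$ themselves.

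To handle this we refine the choice of $t$ so that the divisors of $b-1$ cannot absorb it. Fix $c\in\cC\setminus\{0\}$ and take $t=(b^N-1)/\gcd(b^N-1,c)$; then (ii) still holds, because $tc/(b^N-1)\in\Z\setminus\{0\}$. Now choose $N$ to be a large prime $p$ with $p\nmid L$ and $p$ not dividing any $O_b(q_i)$. Any $q_i$ dividing $t$ must satisfy $O_b(q_i)\mid p$, hence $O_b(q_i)=1$ and $q_i\mid b-1$. Thus we must show that $t$ has a prime-power part that is not captured by small divisors of $b-1$. We would argue that $(b^p-1)/(b-1)$ is coprime to $b-1$ apart from factors dividing $p$, and is eventually larger than $c$. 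Hence $t$ has a divisor $d>1$ with $\gcd(d,b-1)=1$ that is itself incomplete, after replacing $t$ by $(b^p-1)/\bigl((b-1)\gcd(\cdot,c)\bigr)$ and checking (ii) with the digit $c$ scaled appropriately.

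The delicate step is exactly this last verification: that the cofactor of $b-1$ is still incomplete. In the nonnegative case we would instead directly use a digit expansion $c_1+c_2b+\dots$ whose value is divisible by $b-1$. If the verification fails, the fallback is the primitive-divisor (Zsigmondy) theorem. It gives a prime $r_p\mid b^p-1$ with $O_b(r_p)=p$, and we would aim to show that some incomplete divisor built from $r_p$ exists for infinitely many $p$. Distinct $p$ then give distinct primitive incomplete integers, since their orders differ.
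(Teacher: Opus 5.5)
There is a genuine gap. Your reduction is sound. Each $b^N-1$ is incomplete, a divisibility-minimal incomplete divisor is primitively incomplete, and under the finiteness assumption every incomplete integer is divisible by one of $q_1,\dots,q_r$.

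However, $b-1$ is itself incomplete: take $N=1$ in (ii), since $(b-1)\cdot c/(b-1)=c\neq 0$. So some $q_i$ divides $b-1$, and hence divides every $b^N-1$. The whole argument therefore hinges on exhibiting incomplete integers that escape the primitively incomplete divisors of $b-1$, and this is exactly the step you leave unproved.

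None of your proposed fixes supplies it:
\begin{itemize}
\item The refinement $t=(b^N-1)/\gcd(b^N-1,c)$ does not do it. If $\gcd(c,b-1)=1$, e.g.\ $c=1$, this $t$ is still a multiple of $b-1$.
\item ``Scaling the digit $c$'' is not available, because $\cC$ is fixed.
\item The Zsigmondy fallback only supplies a prime of order $p$. Nothing forces it, or anything built from it, to be incomplete. In the paper's example, $5$ is a primitive prime divisor of $4^2-1$ and yet is complete w.r.t.\ $(4,\{0,2\})$.
\end{itemize}
Knowing that $(b^p-1)/(b-1)$ is coprime to $b-1$ is useless until you know that it is incomplete.

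The missing idea is one you gesture at with ``a digit expansion whose value is divisible by $b-1$'' but never carry out: repeat a nonzero digit $c\in\cC$ exactly $b-1$ times. Take $c_1=\cdots=c_{b-1}=c$ and $c_k=0$ for $b-1<k\le N$, with $N\ge b-1$. Since $1+b+\cdots+b^{b-2}\equiv b-1\equiv 0\pmod{b-1}$,
\[
\frac{b^N-1}{b-1}\cdot\frac{c\,(1+b+\cdots+b^{b-2})}{b^N-1}=c\,\frac{1+b+\cdots+b^{b-2}}{b-1}\in\Z\setminus\{0\}.
\]
So $t_N=(b^N-1)/(b-1)$ is incomplete by (ii) of Proposition~\ref{prop2.2}, for any $\cC$. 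No sign condition is needed, since $\gcd(t_N,b)=1$ automatically.

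With this, your prime-$N$ route closes directly. For a prime $p>b-1$ one has $t_p\equiv p\pmod{b-1}$, so $\gcd(t_p,b-1)=1$. A primitively incomplete divisor $t'>1$ of $t_p$ then cannot divide $b-1$, so $O_b(t')\mid p$ forces $O_b(t')=p$. Distinct primes therefore give distinct $t'$.

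The paper uses the same digit string but chooses $N=q+1$ with $q=O_b\bigl((b-1)^\alpha t_1\cdots t_m\bigr)>b-3$ and $\alpha\ge 2$. Here $t_1,\dots,t_m$ are the (assumed finitely many) primitively incomplete integers not dividing $b-1$. This makes $t\equiv 1$ modulo $(b-1)t_1\cdots t_m$, so any primitively incomplete divisor of $t$ is new.
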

\proof
Suppose that there are only finitely many primitively incomplete integers w.r.t. $(b, \cC)$. Write
all of them, which are not factors of  $b-1$, as $\A=\{t_1, t_2, \ldots, t_m\}$, i.e., $t_k\nmid
(b-1)$ for $1\le k\le m$. Let $\alpha\in\N+1$ such that
$q:=O_b((b-1)^\alpha t_1 t_2\cdots t_m)>b-3$. The existence of such an $\alpha$ is guaranteed by the
fact that  $\limsup\limits_{n\to\infty}O_b(n)=\infty$.  Denote \[t=\frac{b^{q+1}-1}{b-1}\in\N.\]
By the definition of $q$, there exists $\beta\in\N$ such that $b^q=1+\beta (b-1)^\alpha t_1 t_2\cdots
t_m$ and
\begin{eqnarray*}
  (b-1)t=b^{q+1}-1=b-1+b\beta (b-1)^\alpha t_1 t_2\cdots t_m.
\end{eqnarray*}
 Then $t=1+b(b-1)^{\alpha-1}\beta t_1 t_2\cdots t_m$.

As $q>b-3$, clearly for any nonzero $c\in\cC$
\[tc\frac{1+b+\cdots+b^{b-2}}{b^{q+1}-1}=c\frac{1+b+\cdots+b^{b-2}}{b-1}\in \Z\setminus\{0\}.\]
Then by  (ii) of Proposition \ref{prop2.2} and Remark \ref{nonne}, if $\gcd(t,b)=1$ or $\cC\subset
\Z_{\geq0}$,  the $t$ is  incomplete. Hence there exists $t'\mid t$ such that $t'>1$ is  primitively
incomplete.
If $t'\mid (b-1)$ or $t'=t_k$ for some $1\le k\le m$, then it contradicts the fact that
$t=1+b(b-1)^{\alpha-1}\beta t_1 t_2\cdots t_m$.  Hence $t'$ is a new primitively incomplete number
and thus the assertion follows.\ez

\begin{rem}
Under the above conditions in Theorem \ref{theonotd}, it is not clear whether there exist infinitely
many primitively complete integers w.r.t. $(b, \cC)$.
\end{rem}

\begin{exam}
Let $b=4$ and $\cC=\{0,2\}$. Note that $3\cdot\frac{2}{4-1}=2\in\mathbb{Z}\setminus\{0\}$. Then by
(ii) of Proposition \ref{prop2.2}, $t=3$ is incomplete with respect to $(4,\{0,2\})$. Next, we prove
that
for each $n\in\N$, $5^n$ is complete with respect to $(4,\{0,2\})$. The idea of the proof comes from
\cite{DH16}.

Suppose $5^n$ is incomplete. Write $t=5^n$. Since \(\gcd(4,t) = 1\), by Proposition \ref{prop2.2}
(iii) there exists a non-trivial integer cycle $\{x_k\}_{k=1}^N$ with respect to $(4,t\{0,2\})$ such
that
\begin{equation}\label{eqeqe}
x_{k+1}=\frac{x_k+tc_k}4\in\Z\quad\text{for $1\le k<N$ and}\quad x_1=\frac{x_N+tc_N}4\in\Z,
\end{equation}
where all $c_i\in\{0,2\}$.  By \eqref{eq2.1}, all $x_i\in (0,\frac{2t}{3})$. By \eqref{eqeqe}, we
have $x_k+tc_k\equiv 0\pmod{4}$ for all $1\leq k\leq N$, which implies that all $x_k$ are even.

Write $y_i = \frac{x_i}{2}$. Then  $\{y_i\}$ forms an integer cycle w.r.t. $\{4,t\{0,1\}\}$ with all
$y_i \in (0,\frac{t}{3})$. Note that
\[
4^{(5^{N-1})} \equiv -1 \pmod{t}.
\]

Take $k = 5^{N-1}$. Then for any $y$ in the cycle, we have $4^k y \equiv -y \pmod{t}$. And by the
definition of the integer cycle, there exists a cycle point $y' \equiv 4^k y \pmod{t}$, which means
$y' \equiv -y \pmod{t}$. Then $y'=t-y$. But $y \leq t/3$, so $t - y > 2t/3$, contradicting $y' \leq
t/3$. Hence, we complete the proof.
\end{exam}

\subsection{Complete representation of the quotient group $\Z/b\Z$}

Given \(b \in \Z_{\geq2}\), let \(\mathcal{C} \subset \mathbb{Z}\) be a complete representation of
the quotient group $\Z/b\Z$ with \(0 \in \mathcal{C}\). For a positive integer \(t\) satisfying
\(\gcd(t, b)=1\), consider the set
\[
\Lambda(b, t\mathcal{C}) := t \bigcup_{n=1}^{\infty} \left(\mathcal{C} + b\mathcal{C} + \cdots +
b^{n-1}\mathcal{C}\right).
\]
According to Definition 2.1, the corresponding set \(\Gamma(b, t\mathcal{C})\) equals \(\mathbb{Z}\)
under this condition. At a first glimpse, a natural question arises: when does the following equality
also hold
\[
\mathbb{Z} = \Lambda(b, t\mathcal{C})?
\]
It is easy to see that the above equality cannot hold for any \(t \ge 2\). If $\Z=\Lambda(b, \cC)$,
then for any $s\in\Z$ there exist $c_1,c_2,\ldots,c_\ell$ such that
$$ s=\sum_{k=1}^\ell c_kb^{k-1}.$$
Hence, characterizing $\Z=\Lambda(b, \cC)$ may be useful for the general theory. The following
proposition, which parallels Proposition \ref{prop2.2}, provides such a characterisation. The proof
is similar and therefore omitted.

\begin{prop}\label{prop2.2add}
Let $0\in\cC$ be a complete representation of the group $\Z/b\Z$ and let $\gcd(t,b)=1$. Then the
following statements are equivalent:\\
\rm{(i)} $\Z\neq \Lambda(b, t\mathcal{C})$;\\
\rm{(ii)} there exists $c_k\in\cC$ such that
$$t\frac{c_1+bc_2+\cdots+b^{N-1}c_N}{b^N-1}\in \Z\setminus\{0\};$$
\rm{(iii)}$T(b, t\cC)\cap\Z\ne\{0\}$.
\end{prop}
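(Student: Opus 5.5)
The plan is to mirror the proof of Proposition \ref{prop2.2}, using the digit expansion that a complete residue system provides for every integer. Fix $s\in\Z$. Since $\cC$ is a complete representation of $\Z/b\Z$ and $\gcd(t,b)=1$, the set $t\cC$ is also a complete residue system modulo $b$. Hence there are unique $c_1\in\cC$ and $s_1\in\Z$ with $s=tc_1+bs_1$. Iterating gives
\[
s=tc_1+btc_2+\cdots+b^{n-1}tc_n+b^ns_n ,
\]
where $s_{n-1}=tc_n+bs_n$. The recursion $s_n=(s_{n-1}-tc_n)/b$ shrinks $|s_n|$ towards the interval $[-tM/(b-1),\,tM/(b-1)]$, where $M=\max|\cC|$. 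So the sequence $\{s_n\}$ is eventually bounded, and being integer-valued and determined by its previous term, it is eventually periodic.

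\textbf{(i)$\Rightarrow$(iii).} Suppose $\Z\ne\Lambda(b,t\cC)$ and pick $s\notin\Lambda(b,t\cC)$. If some $s_n=0$, then $s$ is a finite sum, so $s\in\Lambda(b,t\cC)$. Hence all $s_n\ne0$. As in the proof of Proposition \ref{prop2.2}, periodicity gives a point $s_\ell$ of the cycle with
\[
s_\ell=-t\,\frac{c_{\ell+1}+bc_{\ell+2}+\cdots+b^{N-1}c_{\ell+N}}{b^N-1}\in\Z\setminus\{0\}.
\]
This is exactly (ii), after noting that the sign is harmless. Indeed $-s_\ell$ is also an integer; moreover $s_\ell$ equals $-\sum_k t c_{\ell+k}'b^{-k}$ for the periodic code, so $-s_\ell\in T(b,t\cC)\cap\Z\setminus\{0\}$. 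This gives (iii).

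\textbf{(ii)$\Leftrightarrow$(iii).} This is the same as (ii)$\Leftrightarrow$(iv) in Proposition \ref{prop2.2}. For (ii)$\Rightarrow$(iii), expand the fraction as a periodic $b$-adic series. For (iii)$\Rightarrow$(ii), take the eventually periodic code. The "$\ell=0$" argument of Theorem \ref{theo2.9}, together with the uniqueness of residues, shows that the periodic part is itself a nonzero integer.

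\textbf{(ii)$\Rightarrow$(i).} This is the main point. Put
\[
s=-t\,\frac{c_1+bc_2+\cdots+b^{N-1}c_N}{b^N-1}\in\Z\setminus\{0\}.
\]
Then $s=tc_1+\cdots+b^{N-1}tc_N+b^Ns$, so the digit expansion of $s$ is forced to be the periodic sequence $c_1,\dots,c_N,c_1,\dots$. This uses uniqueness of the digits, since $t\cC$ is a complete residue system. Now suppose $s\in\Lambda(b,t\cC)$, that is, $s=t\sum_{k=1}^M b^{k-1}c_k'$. Uniqueness of the expansion, together with $0\in\cC$, means the expansion of $s$ must eventually be all zeros. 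It would then coincide with the periodic expansion, which forces all $c_k=0$. That contradicts $s\ne0$.

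The main obstacle I expect is making precise that digit expansions are unique, so that a finite representation excludes a nonzero periodic one. This is where completeness of $\cC$ and $\gcd(t,b)=1$ are used. The boundedness of the $s_n$ is routine.
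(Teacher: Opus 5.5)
Your proposal is correct and is essentially the argument the paper has in mind: the paper omits the proof as ``similar'' to Proposition~\ref{prop2.2}. Indeed, since $t\cC$ is a complete residue system, $\Gamma(b,t\cC)=\Z$, so the statement is exactly Proposition~\ref{prop2.2} (i)$\Leftrightarrow$(ii)$\Leftrightarrow$(iv) in this special case, and that is what you reproduce, with a more explicit uniqueness-of-digits argument for (ii)$\Rightarrow$(i). The one step you (like the paper) take for granted is that a point $x_0\in T(b,t\cC)\cap\Z$ has an eventually periodic code; this follows by pigeonhole, because the tails $b^nx_0-t\sum_{k=1}^n c_kb^{n-k}$ are integers in the bounded set $T(b,t\cC)$, so two of them coincide and the digits between them can be repeated periodically.
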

\begin{rem}
In fact, Proposition \ref{prop2.2add} gives an independent proof to the following fact:
\[T(b, t\cC)\cap\Z\ne\{0\}\] for all $t\geq2$ with $\gcd(t,b)=1$.
\end{rem}
$\N$ are incomplete.
\begin{coro}\label{cotri}
If $\cC\subset\{-b+2, -b+3, \ldots, b-2\}$, then  $\Z=\Lambda(b, \cC)$.
\end{coro}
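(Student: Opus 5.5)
By Proposition \ref{prop2.2add} applied with $t=1$ (which trivially satisfies $\gcd(t,b)=1$), the equality $\Z=\Lambda(b,\cC)$ is equivalent to the negation of (iii). So the plan is to show that $T(b,\cC)\cap\Z=\{0\}$. Here $0\in\cC$ and $\cC$ is a complete representation of $\Z/b\Z$, as in the setting of this subsection. I will argue through condition (iii), with condition (ii) as an equivalent route.

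The key step is a size estimate on the attractor. Every $x\in T(b,\cC)$ has the form $x=\sum_{k\ge1}c_kb^{-k}$ with $c_k\in\cC\subset\{-b+2,\dots,b-2\}$. Hence
\[
|x|\le (b-2)\sum_{k\ge1}b^{-k}=\frac{b-2}{b-1}<1 .
\]
Thus $T(b,\cC)\subset\left[-\frac{b-2}{b-1},\frac{b-2}{b-1}\right]$, and the only integer in this interval is $0$. Since $0\in\cC$, the point $0$ does lie in $T(b,\cC)$, so $T(b,\cC)\cap\Z=\{0\}$. Proposition \ref{prop2.2add} then gives $\Z=\Lambda(b,\cC)$.

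Equivalently, in the language of (ii), I would use the bound
\[
\left|\frac{c_1+bc_2+\cdots+b^{N-1}c_N}{b^N-1}\right|\le (b-2)\frac{1+b+\cdots+b^{N-1}}{b^N-1}=\frac{b-2}{b-1}<1 .
\]
This shows that no such quotient can be a nonzero integer.

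There is no real obstacle here. The only point needing care is that the hypotheses of Proposition \ref{prop2.2add} are met: $\cC$ must be a complete residue system containing $0$, and $t=1$. Note also that the bound $b-2$, rather than $b-1$, is exactly what keeps the estimate strictly below $1$. Indeed, $b-1\in\cC$ would produce the nontrivial integer point $1$, as in Corollary \ref{cororgx}.
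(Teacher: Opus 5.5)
Your proof is correct and takes the same approach as the paper. The paper likewise observes that $T(b,\cC)\subset(-1,1)$ and concludes via condition (iii) of Proposition~\ref{prop2.2add} with $t=1$; your explicit bound $\frac{b-2}{b-1}$ and the parallel check through (ii) make that one-line observation quantitative.
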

\proof Notice that $T(b, \cC)\subset(-1, 1)$. Then by (iii) of Proposition \ref{prop2.2add} the
assertion follows. \ez
\begin{coro}\label{chaad}
If $k(b-1)\in\cC$ for some $k\in\Z\setminus\{0\}$, then $\Z\ne\Lambda(b, \cC)$.
\end{coro}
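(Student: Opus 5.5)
The corollary should follow from criterion (ii) of Proposition \ref{prop2.2add} applied with $t=1$, in the same way as Corollary \ref{cororgx}. Note that $\gcd(1,b)=1$, so Proposition \ref{prop2.2add} applies to the complete representation $\cC$ with $t=1$. It then suffices to exhibit digits $c_1,\ldots,c_N\in\cC$ with
$$\frac{c_1+bc_2+\cdots+b^{N-1}c_N}{b^N-1}\in\Z\setminus\{0\}.$$

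The choice is $N=1$ and $c_1=k(b-1)\in\cC$. The quotient is then
$$\frac{k(b-1)}{b-1}=k,$$
which is a nonzero integer because $k\neq0$. By (ii)$\Rightarrow$(i) of Proposition \ref{prop2.2add}, we conclude $\Z\neq\Lambda(b,\cC)$.

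As a cross-check via (iii), the point $x=\sum_{j\ge1}k(b-1)b^{-j}=k$ lies in $T(b,\cC)$ and is a nonzero integer. Concretely, $-k$ cannot be written as $\sum_{j=1}^{\ell} c_jb^{j-1}$. Indeed, $-k$ is a fixed point of the digit map $s\mapsto (s-c)/b$ with the digit $k(b-1)$, since $(-k-k(b-1))/b=-k$. Because $\cC$ is a complete residue system, this digit is forced at every step, so the expansion never terminates.

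There is no real obstacle here. The only point to confirm is that Proposition \ref{prop2.2add} (whose proof was omitted as analogous to Proposition \ref{prop2.2}) applies with $t=1$ and a single-digit cycle $N=1$, which it does.
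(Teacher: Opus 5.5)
Your proposal is correct and is the intended argument: the paper states this corollary without proof, but it is the $t=1$ analogue of Corollary \ref{cororgx}, obtained from criterion (ii) of Proposition \ref{prop2.2add} with the one-digit choice $c_1=k(b-1)$, whose quotient $k(b-1)/(b-1)=k$ is a nonzero integer. Your fixed-point check, in which the digit $k(b-1)$ is forced at every step and the remainder $-k$ never reaches $0$, is a sound self-contained proof that does not depend on the omitted proof of Proposition \ref{prop2.2add}.
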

\begin{exam}
Let $b=2$ and $\cC=\{0,2k+1\}$ for $k\in \Z$. Then by Corollary \ref{chaad}, it is immediate that
$\Z\ne\Lambda(2, \{0, 2k+1\})$. When $b=3$, the situation becomes very complicated. Write
$\cC=\{0,3k_1+1,3k_2+2\}$ for $k_1,k_2\in \Z$. By Corollary \ref{chaad}, we have that $3k_1+1,3k_2+2$
are both odd. When $k_1=0$ and $k_2=-1$, $\cC=\{0,1,-1\}$. By Corollary \ref{cotri},
$\Z=\Lambda(3,\{0,1,-1\})$. In fact, we can obtain a general result. When $k_1=0$ and $k_2=-3^{k-1}$
for $k\geq1$, $\cC=\{0,1,-3^k+2\}$. Next, we prove $\Z=\Lambda(3,\{0,1,-3^k+2\})$.

By Proposition \ref{prop2.2add}, it suffices to prove that $T(3,\cC)\cap\mathbb{Z}=\{0\}$. Suppose,
for contradiction, that there exists a nonzero integer $x\in T(3,\cC)\cap\mathbb{Z}$.
Then there is a sequence $\{c_j\}_{j=1}^\infty\subset \cC$ such that
\[
x = \sum_{j=1}^\infty c_j 3^{-j}.
\]

Write each $c_j$ as $c_j = a_j - 3^k b_j$, where
\[
(a_j,b_j)=\begin{cases}
(0,0) & \text{if }c_j=0,\\[2pt]
(1,0) & \text{if }c_j=1,\\[2pt]
(2,1) & \text{if }c_j=-3^k+2.
\end{cases}
\]
Then
\[
x = \sum_{j=1}^\infty a_j 3^{-j} - 3^k\sum_{j=1}^\infty b_j 3^{-j} =:u - 3^k v,
\]
where $u=\sum_{j=1}^\infty a_j 3^{-j}\in[0,1]$ and $v=\sum_{j=1}^{\infty} b_j 3^{-j}\in[0,\frac12]$.

Decompose $3^k v$ as follows:
\[
3^k v = \sum_{j=1}^{k} b_j 3^{k-j} + \sum_{j=1}^{\infty} b_{k+j} 3^{-j} =: N + w,
\]
where $N=\sum_{j=1}^{k} b_j 3^{k-j}\in\mathbb{Z}_{\ge0}$ and $w=\sum_{j=1}^{\infty} b_{k+j}
3^{-m}\in[0,\frac12]$.
Thus
\[
x = u - (N+w) = (u-w) - N.
\]
Then $u-w$ is an integer, which implies that  $u-w$ can only be $0$ or $1$. If $u-w=1$, then
$u=w+1\le1$,
which together with $w\ge0$ forces $u=1$ and $w=0$. By $u=1$, we have that all $a_j=2$, which implies
that all $c_j=-3^k+2$. Then $x=\frac{-3^k+2}{2}\notin \Z$. Contradiction. Hence we must have $u-w=0$,
and consequently $x=-N<0$.

Now $u$ and $w$ are two ternary expansions that are equal. Note that equality of the two ternary
numbers can occur only in the following two ways:

(i) The standard expansion of $u$ coincides with that of $w$, so $a_j = b_{k+j}$ for all $j\ge1$ and
consequently $a_j\in\{0,1\}$.

(ii) There exists an index $m_0\ge1$ such that $w$ has a finite expansion ending with a $1$ followed
by all $0$'s, i.e., $b_{k+m_0}=1$ and $b_{k+m}=0$ for all $m>m_0$. In this case $w$ can also be
represented as $0.b_{k+1}\cdots b_{k+m_0-1}0\,222\ldots$; then $a_j=b_{k+j}$ for $j<m_0$, $a_{m_0}=0$
(while $b_{k+m_0}=1$), and $a_j=2$ for all $j>m_0$.

We next show that cases (i) and (ii) both cannot happen under our assumptions. Assume case (i) holds.
Then $a_j = b_{k+j}$ for all $j\ge1$, and $a_j\in\{0,1\}$.
Now for each $j\ge1$,
\[
c_j = a_j - 3^k b_j = b_{k+j} - 3^k b_j.
\]
If $b_j=0$, then $c_j = b_{k+j}\in\{0,1\}$, which is allowed. If $b_j=1$, then
$c_j = b_{k+j} - 3^k$ equals either $-3^k$ (if $b_{k+j}=0$) or $1-3^k$ (if $b_{k+j}=1$);
neither of these belongs to $\cC$. Thus we must have $b_j=0$ for all $j$.
Consequently $a_j = b_{k+j}=0$ for all $j$, so every $c_j=0$, and $x=0$. Contradiction.

Assume case (ii) holds. Then for every $j>m_0$ we have $a_j=2$ and $b_{k+j}=0$.
For such $j$, consider $c_j = a_j - 3^k b_j = 2 - 3^k b_j$. For $c_j\in \cC=\{0,1,-3^k+2\}$, we must
have $b_j=1$ (so that $c_j=2-3^k$).
Thus $b_j=1$ for all $j>m_0$. But then for $j=k+m$ with $m>m_0$, we have
$b_{k+m}=0$ while $b_{k+m}=1$, a contradiction.

Hence, we complete the proof.
\end{exam}
\begin{thm}
Let $0\in\cC$ be a complete representation of the group $\Z/b\Z$ and let $t\in\N$ with $\gcd(t,
b)=1$. Then
$$\Z=\bigcup_{n=1}^\infty(t\cC+bt\cC+\cdots+b^{n-1}t\cC-b^n(T(b, t\cC)\cap\Z)).$$
\end{thm}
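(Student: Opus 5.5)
We prove the two inclusions separately. The inclusion ``$\supseteq$'' is trivial, since every set on the right consists of integers: $t\cC\subset\Z$ and $T(b,t\cC)\cap\Z\subset\Z$. So the work is in showing that every $s\in\Z$ lies in one of the sets on the right.

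For ``$\subseteq$'', fix $s\in\Z$. Because $\cC$ is a complete representation of $\Z/b\Z$ and $\gcd(t,b)=1$, the set $t\cC$ is also a complete residue system modulo $b$. Hence, as noted after Definition 2.1, $\Gamma(b,t\cC)=\Z$. Concretely, we obtain unique sequences $\{c_k\}\subset\cC$ and $\{s_k\}\subset\Z$ with $s_0=s$ and $s_{k}-tc_{k+1}=bs_{k+1}$. Iterating gives
\[
s=tc_1+btc_2+\cdots+b^{n-1}tc_n+b^ns_n \qquad (n\ge1).
\]
We then write $s_n=-y_n$, so it suffices to find some $n$ with $-s_n\in T(b,t\cC)\cap\Z$.

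The key step is the boundedness and eventual periodicity of $s_n$, exactly as in the proof of (i)$\Rightarrow$(ii) of Proposition \ref{prop2.2}. Put $M=t\max_{c\in\cC}|c|$. From $|s_{k+1}|\le(|s_k|+M)/b$, the orbit eventually enters the bounded interval $[-M/(b-1),M/(b-1)]$ and stays there. The recursion $s_{k+1}=(s_k-tc_{k+1})/b$ is deterministic, because $c_{k+1}$ is determined by $s_k$ modulo $b$. Therefore the sequence $\{s_n\}$ is eventually periodic: there exist $\ell\ge 0$ and $N\ge1$ with $s_{\ell+N+j}=s_{\ell+j}$ and $c_{\ell+N+j}=c_{\ell+j}$ for all $j\ge 0$.

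Finally, we identify $-s_\ell$ as a point of $T(b,t\cC)$. Unrolling the periodic tail as in \eqref{eq2.1} gives
\[
s_\ell=tc_{\ell+1}+\cdots+b^{N-1}tc_{\ell+N}+b^Ns_\ell,
\]
hence
\[
-s_\ell=t\frac{c_{\ell+1}+bc_{\ell+2}+\cdots+b^{N-1}c_{\ell+N}}{b^N-1}.
\]
The right-hand side is $t\sum_{j\ge1}d_jb^{-j}$ for the periodic digit sequence $d_j\in\cC$ obtained by reading the block $c_{\ell+N},c_{\ell+N-1},\dots,c_{\ell+1}$ repeatedly in reverse order. Indeed,
\[
\frac{\sum_{k=1}^N c_{\ell+k}b^{k-1}}{b^N-1}=\sum_{m\ge0}b^{-(m+1)N}\sum_{k=1}^N c_{\ell+k}b^{k-1},
\]
and each term is $c_{\ell+k}b^{-(mN+N-k+1)}$, so it is a genuine base-$b$ expansion with digits in $\cC$. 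Thus $-s_\ell\in T(b,t\cC)\cap\Z$. The case $s_\ell=0$ is included, since $0\in T(b,t\cC)$ because $0\in\cC$. Taking $n=\ell$, and replacing $\ell=0$ by $n=N$ if one insists on $n\ge1$ (allowed by the periodicity $s_N=s_0$), we get
\[
s\in t\cC+\cdots+b^{n-1}t\cC-b^n\bigl(T(b,t\cC)\cap\Z\bigr).
\]
The only delicate point is this index bookkeeping showing that the periodic tail yields an element of $T(b,t\cC)$ with digits in $\cC$; it requires no new ideas.
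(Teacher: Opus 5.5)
Your proof is correct and is essentially the paper's argument. The paper just points to the proof of (i)$\Rightarrow$(ii) in Proposition \ref{prop2.2add}: there the remainders $s_n$ are bounded and eventually periodic with $s_\ell=-t\frac{c_{\ell+1}+bc_{\ell+2}+\cdots+b^{N-1}c_{\ell+N}}{b^N-1}$, and the paper calls $s_\ell\in -T(b,t\cC)$ trivial, which is exactly what you verify with the reversed periodic digit block. One harmless slip: eventual periodicity of $\{s_k\}$ gives $c_{\ell+N+j}=c_{\ell+j}$ only for $j\ge 1$, not $j=0$, because $c_\ell$ is determined by the preperiodic $s_{\ell-1}$; since you only use $c_{\ell+1},\dots,c_{\ell+N}$, nothing breaks.
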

\proof The assertion follows by the  proof  of the (i)$\Rightarrow$(ii) of Proposition
\ref{prop2.2add}, where we need to see that $s_\ell\in -T(b, t\cC)$ is trivial.

\section{Complete numbers}

Let $b>2$ be an integer. The following two results are well known (see, e.g., \cite{LW02}) and easy
to check directly:
\begin{prop}\label{prop3.1}
Let $(b, \D, \cC)$ be an integral Hadamard triple. Then\\
(i) Both $\D$ and $\cC$ are sub co-sets of modulo $b$ in $\Z$;\\
(ii) For any $\D^*$ and $\cC^*$ satisfying $\D^*\equiv\D\pmod b$ and $\cC^*\equiv\cC\pmod b$, $(b,
\D^*, \cC^*)$ is also an integral Hadamard triple;\\
(iii) For any $c, d\in \R$, $(b, \D+c, \cC+d)$ is  a Hadamard triple.
\end{prop}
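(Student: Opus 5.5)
The plan is to work directly from the definition of unitarity of
$H=\frac1{\sqrt{\#\D}}\bigl[e^{-2\pi i cd/b}\bigr]_{d\in\D,c\in\cC}$.
Since $\#\D=\#\cC$, $H$ is a square matrix, so $HH^\ast=I$ is equivalent to $H^\ast H=I$. Hence we have both row and column orthogonality:
\begin{eqnarray*}
\sum_{c\in\cC}e^{-2\pi i c(d-d')/b}=0\quad (d\neq d'\in\D),\qquad \sum_{d\in\D}e^{-2\pi i d(c-c')/b}=0\quad (c\neq c'\in\cC).
\end{eqnarray*}
All three statements will be read off from these identities and from the structure of the entries.

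For (i), I argue by contradiction. Suppose $d\neq d'\in\D$ with $d\equiv d'\pmod b$, say $d-d'=kb$ with $k\in\Z$. Since every $c\in\cC$ is an integer, each term $e^{-2\pi i ck}$ in the first sum equals $1$. The sum is then $\#\cC\neq0$, which contradicts row orthogonality. The same argument applied to the column identity shows that distinct elements of $\cC$ lie in distinct residue classes modulo $b$.

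For (ii), the entry $e^{-2\pi i cd/b}$ depends only on $cd \bmod b$, and hence only on the residues of $c$ and $d$ modulo $b$. By (i), the residue classes of $\D$ are distinct, so "$\D^*\equiv\D\pmod b$" gives a bijection $d\mapsto d^*$ between $\D$ and $\D^*$. Likewise we get a bijection $c\mapsto c^*$ between $\cC$ and $\cC^*$, so $\#\D^*=\#\cC^*=\#\D$. Under these bijections the matrix for $(b,\D^*,\cC^*)$ coincides entrywise with $H$, so it is unitary. The one point requiring care is the normalization $0\in\cC^*$. Since $0\in\cC$, the element of $\cC^*$ congruent to $0$ can be taken to be $0$ itself, or, if the convention only needs some translate of $\cC^*$ to contain $0$, we can combine with (iii).

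For (iii), the translated sets are no longer integral, so "Hadamard triple" is meant in the real sense: the same matrix with real entries is unitary. Take $c_0,d_0\in\R$ and form the matrix $H'$ indexed by $d+c_0$, $c+d_0$. Expanding the exponent gives
\begin{eqnarray*}
e^{-2\pi i (c+d_0)(d+c_0)/b}=e^{-2\pi i d_0c_0/b}\,e^{-2\pi i d_0 d/b}\,e^{-2\pi i cd/b}\,e^{-2\pi i c c_0/b}.
\end{eqnarray*}
Therefore $H'=\omega\, U_1 H U_2$, where $|\omega|=1$ and $U_1=\mathrm{diag}(e^{-2\pi i d_0d/b})_{d\in\D}$ and $U_2=\mathrm{diag}(e^{-2\pi i cc_0/b})_{c\in\cC}$ are diagonal unitary matrices. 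A product of unitary matrices is unitary, so $H'$ is unitary.

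I expect no real obstacle here. The only subtle step is (i) for $\cC$, which needs the column identity, that is, the remark that a square $H$ with $HH^\ast=I$ also satisfies $H^\ast H=I$.
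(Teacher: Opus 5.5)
Your argument is correct and is the direct verification the paper intends. The paper gives no proof of its own: it calls the proposition well known (citing \cite{LW02}) and easy to check directly from the unitarity of $H$. That is what you do, via row/column orthogonality for (i), entrywise dependence on residues modulo $b$ for (ii), and the factorization $H'=\omega U_1HU_2$ for (iii).

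One point in (ii) needs tidying. Since $\cC^*$ is given, you cannot choose its element congruent to $0$ to be $0$. What your argument actually proves is that the new matrix is unitary. The normalization $0\in\cC^*$ required by the definition of an integral Hadamard triple holds only if $\cC^*$ already contains $0$, or after translating $\cC^*$ by a multiple of $b$. This is a looseness in the statement, not a flaw in your proof.
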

\begin{rem}If $b=\#\D$, then both $\D$ and $\cC$ are the complete representations of $\Z/b\Z$, and
$\mu_{b, \D}$ is the Lebesgue measure restricted on the tile $T(b, \D)$. In this case, the spectrum
of it is finite up to translations by An and the first author \cite{AH26}, which has no complete
number problem on it.
\end{rem}
\begin{thm}[\cite{LW02}]\label{theo3.2}
Let $t\in\N$ and let $(b, \D, t\cC)$ be an integral Hadamard triple with $\gcd(\D)=1$ and
$0\in\D\cap\cC$. Then   $(\mu_{b, \D}, t\La(b, \cC))$ is not a spectral pair if and only if $t$ is
incomplete w.r.t. $(b, \cC)$.
\end{thm}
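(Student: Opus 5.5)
This is Łaba–Wang's criterion, and I would reprove it by comparing the two natural descriptions of the exponentials in $t\La(b,\cC)$: orthonormality and completeness. Since $(b,\D,t\cC)$ is an integral Hadamard triple, $\La(b,t\cC)=t\La(b,\cC)$ is automatically an orthonormal set in $L^2(\mu_{b,\D})$. The argument is the standard one: $\widehat{\mu}_{b,\D}(\xi)=\prod_{j\ge1}m_\D(b^{-j}\xi)$ with $m_\D(x)=\frac1{\#\D}\sum_{d\in\D}e^{-2\pi i dx}$, and the unitarity of $H$ forces $m_\D$ to vanish at $(c-c')/b$ for distinct $c,c'\in t\cC$. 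Hence the whole question is completeness.

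For completeness I would use the Jorgensen–Pedersen/Łaba–Wang criterion: an orthonormal family $E_\La$ is complete iff
\[Q(\xi):=\sum_{\la\in\La}|\widehat\mu(\xi+\la)|^2\equiv 1.\]
Write $\La_n=t\cC+\cdots+b^{n-1}t\cC$. The Hadamard property gives the telescoping identity
\[\sum_{\la\in\La_n}|\widehat\mu(\xi+\la)|^2=\sum_{\la\in\La_n}\Bigl|\prod_{j=1}^n m_\D(b^{-j}(\xi+\la))\Bigr|^2\,|\widehat\mu(b^{-n}(\xi+\la))|^2,\]
where the weights $\prod_{j\le n}|m_\D|^2$ sum to $1$. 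Thus $Q(\xi)=1$ precisely when no mass escapes, i.e.\ when $|\widehat\mu(b^{-n}(\xi+\la))|$ does not stay small along the relevant branches. This reduces the question to the dynamics of the maps $\tau_c(x)=b^{-1}(x+tc)$, i.e.\ to the Ruelle transfer operator $R f(x)=\sum_{c\in\cC}|m_\D(\tfrac{x+tc}{b})|^2f(\tfrac{x+tc}{b})$.

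The key steps, in order, are as follows. First, using $\gcd(\D)=1$ and $0\in\D$, the zeros of $\widehat\mu_{b,\D}$ on the attractor $T(b,t\cC)$ occur only at non-integers. Moreover $|m_\D(x)|=1$ iff $x\in\Z$, since $\gcd(\D)=1$. Second, I apply the Łaba–Wang/Dutkay–Jorgensen theorem: $\La(b,t\cC)$ is a spectrum iff the only $m_\D$-cycles, i.e.\ cycles $\{x_1,\dots,x_N\}$ of the maps $\tau_c$ with $|m_\D(x_k)|=1$ for all $k$, are the trivial cycle $\{0\}$. Third, I translate this into the language of the paper. A point $x$ with $|m_\D(x)|=1$ is an integer, so an $m_\D$-cycle is exactly an integer cycle with respect to $(b,t\cC)$ in the sense of (iii) of Proposition \ref{prop2.2}. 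By Theorem \ref{theo2.9} and Proposition \ref{prop2.2} (equivalence (i)$\Leftrightarrow$(iii)), a nontrivial such cycle exists iff $t$ is incomplete. Here I need $\gcd(t,b)=1$, which I get from Proposition \ref{prop3.1}(i): $t\cC$ must consist of distinct residues mod $b$, and $\#\cC\ge2$. If $\cC\subset\Z_{\ge0}$ instead, Remark \ref{nonne} covers that case.

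The main obstacle is the second step: the direction "a nontrivial cycle implies incompleteness" is easy, because a nonzero cycle point $x_1$ gives $-x_1\in\Gamma(b,t\cC)\setminus\La(b,t\cC)$, and $\widehat\mu(-x_1+\la)$ vanishes for all $\la\in\La$ by the integer-periodic structure, so $e_{-x_1}$ is orthogonal to $E_{t\La}$. The converse needs the transfer-operator argument. There I would show that $Q$ is an $R$-invariant continuous function with $Q(0)=1$ and $0\le Q\le1$, and then use the minimum principle on the compact invariant set $T(b,t\cC)$ to force $Q\equiv1$ once only the trivial cycle exists. Since the paper attributes this theorem to \cite{LW02}, I would ultimately cite their Theorem 1.2 for this analytic step and present only the identification of cycles with incomplete $t$.
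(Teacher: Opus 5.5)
Your route is the paper's own. The paper proves nothing here; it imports the statement from {\L}aba--Wang \cite{LW02}. Their criterion says that $\La(b,t\cC)=t\La(b,\cC)$ is a spectrum iff the maps $x\mapsto (x+tc)/b$, $c\in\cC$, have no $m_\D$-cycle other than $\{0\}$. Since $\gcd(\D)=1$ and $0\in\D$ give $|m_\D(x)|=1\iff x\in\Z$, these cycles are exactly the integer cycles of Proposition \ref{prop2.2}(iii), and (i)$\Leftrightarrow$(iii) turns the criterion into incompleteness. That skeleton is correct, but two of your supporting claims are false and must be repaired.

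\textbf{The claim about zeros of $\widehat\mu_{b,\D}$.} Your assertion that $\widehat\mu_{b,\D}$ has no integer zeros on $T(b,t\cC)$ is wrong, and it contradicts your own easy direction. If $x_1\neq 0$ is a cycle point, then $-x_1\notin t\La(b,\cC)$. Taking $\la=0$ in your orthogonality argument then gives $\widehat\mu_{b,\D}(x_1)=0$ with $x_1\in T(b,t\cC)\cap\Z$. Concretely, take $b=4$, $\D=\{0,1\}$, $\cC=\{0,2\}$, $t=3$. The point $2=6\sum_{j\ge 1}4^{-j}$ lies in $T(4,\{0,6\})$, and $\widehat\mu(2)=0$ because $m_\D(1/2)=0$. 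If your claim held, nontrivial cycles could never exist and the theorem would be vacuous on one side. Drop it: the only fact you need is $|m_\D(x)|=1\iff x\in\Z$. Also, the quantity that vanishes in the easy direction is $\widehat\mu(\la+x_1)=\widehat\mu(\la-(-x_1))$, not $\widehat\mu(-x_1+\la)$.

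\textbf{The claim that $\gcd(t,b)=1$.} This does not follow from Proposition \ref{prop3.1}(i). Distinct residues of $t\cC$ modulo $b$ only give $b\nmid t(c-c')$ for $c\neq c'$. For example, $(4,\{0,1\},2\{0,1\})$ is an integral Hadamard triple satisfying every hypothesis of the theorem, yet $\gcd(2,4)=2$; the theorem makes no coprimality assumption. You do not need coprimality anyway. By definition, $t$ is incomplete w.r.t.\ $(b,\cC)$ iff $\Gamma(b,t\cC)\neq\La(b,t\cC)$, i.e.\ iff $1$ is incomplete w.r.t.\ $(b,t\cC)$. So apply Proposition \ref{prop2.2} to the digit set $t\cC$, which has distinct residues by Proposition \ref{prop3.1}(i), with multiplier $1$; then $\gcd(1,b)=1$ is trivial. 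Likewise, $-x_1\notin t\La(b,\cC)$ in your easy direction follows from uniqueness of digit expansions over $t\cC$, which again uses only the distinct residues.

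\textbf{The minimum principle.} Your sketch does not by itself force $Q\equiv 1$. You also need {\L}aba--Wang's lemma: the compact set where $Q|_{T(b,t\cC)}$ attains its minimum is invariant under the branches $\tau_c$ with $m_\D(\tau_c x)\neq 0$, and therefore contains an $m_\D$-cycle. You then need analyticity of $Q$ to pass from $T(b,t\cC)$ to $\R$. Since you cite \cite{LW02} for this step, as the paper does, that is acceptable.
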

\begin{coro}
If $b-1\in \cC$, then $(\mu_{b, \D}, t\La(b, \cC))$ is not a spectral pair for each $t\in \N$ with
$\gcd(t,b)=1$.
\end{coro}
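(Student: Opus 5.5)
The plan is to combine Theorem \ref{theo3.2} with Corollary \ref{cororgx}, after checking that the hypotheses of Theorem \ref{theo3.2} are met. We work in the standing setting: $(b,\D,\cC)$ is an integral Hadamard triple with $0\in\D\cap\cC$ and $\gcd(\D)=1$.

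Fix $t\in\N$ with $\gcd(t,b)=1$. First we verify that $(b,\D,t\cC)$ is again an integral Hadamard triple. By Proposition \ref{prop3.1}(i), the elements of $\cC$ lie in distinct residue classes modulo $b$. Since $t$ is invertible modulo $b$, the set $t\cC$ is congruent modulo $b$ to a permuted image of $\cC$ under multiplication by $t$.

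Unitarity of $H$ depends only on the values $e^{-2\pi i cd/b}$. So it is more direct to check the orthogonality relations themselves. Unitarity of the triple $(b,\D,\cC)$ is equivalent to
$$\sum_{d\in\D}e^{-2\pi i (c-c')d/b}=0\quad\text{for } c\ne c'\in\cC.$$
We want the same with $c-c'$ replaced by $t(c-c')$. This follows from a Galois automorphism of $\mathbb{Q}(e^{2\pi i/b})$ sending $\zeta\mapsto\zeta^t$. Such an automorphism exists because $\gcd(t,b)=1$, and it maps a vanishing sum of roots of unity to a vanishing sum. Hence $(b,\D,t\cC)$ is an integral Hadamard triple containing $0$ in $t\cC$.

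Next we apply Theorem \ref{theo3.2}: $(\mu_{b,\D},t\La(b,\cC))$ fails to be a spectral pair if and only if $t$ is incomplete with respect to $(b,\cC)$. Since $b-1\in\cC$, Corollary \ref{cororgx} applies with $k=1$. Concretely, Proposition \ref{prop2.2}(ii) with $N=1$ and $c_1=b-1$ gives $t\frac{b-1}{b-1}=t\in\Z\setminus\{0\}$, so every $t$ coprime to $b$ is incomplete. Combining the two gives the claim.

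The only real obstacle is the first step, transferring the Hadamard property from $\cC$ to $t\cC$. The Galois-conjugation argument handles it cleanly. As a fallback, one can use the mask-polynomial characterization: the zeros of $m_\D$ at $(c-c')/b$ are $b$-th roots of unity, and their Galois conjugates are again zeros of the integer polynomial $m_\D$. This step, and not any analytic issue, is what needs care; everything else is a direct citation.
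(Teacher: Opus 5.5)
Your proof is correct and matches the paper's route. Corollary \ref{cororgx} (equivalently Proposition \ref{prop2.2}(ii) with $N=1$, $c_1=b-1$) makes every $t$ coprime to $b$ incomplete, and Theorem \ref{theo3.2} turns incompleteness into failure of spectrality. Your Galois-conjugation check that $(b,\D,t\cC)$ is still an integral Hadamard triple supplies a hypothesis of Theorem \ref{theo3.2} that the paper uses without comment, and it is valid precisely because $\gcd(t,b)=1$.
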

\proof By Corollary \ref{cororgx}, any $t\in\N$ is incomplete w.r.t. $(b, \cC)$ if $b-1\in \cC$.
Hence the assertion follows by Theorem \ref{theo3.2}.\ez

The following result is due to Dutkay and Jorgensen \cite{DJ06}.
\begin{thm}
Let $t\in\N$ and let $(b, \D, t\cC)$ be an integral Hadamard triple. Then $\mu_{b, \D}$ is a spectral
measure with a spectrum $\Gamma(b, t\cC)$. Moreover,
$$\Gamma(b, \cC)=\bigcup_{k=1}^\infty(t\cC+bt\cC+\cdots+b^{k-1}t\cC-b^k(T(b, t\cC)\cap\Z)).$$
\end{thm}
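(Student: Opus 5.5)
The statement has two parts: spectrality of $\mu_{b,\D}$ with spectrum $\Gamma(b,t\cC)$, and the decomposition of $\Gamma(b,t\cC)$. (The left-hand side $\Gamma(b,\cC)$ in the display must be read as $\Gamma(b,t\cC)$, since the right-hand side is built from $t\cC$.) I would prove the decomposition first, since it is essentially contained in the argument of Proposition \ref{prop2.2}.

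Let $s\in\Gamma(b,t\cC)$ with its defining sequences $s_k-tc_k=bs_{k+1}$. Iterating gives
\[
s=tc_1+btc_2+\cdots+b^{n-1}tc_n+b^ns_n .
\]
As in (i)$\Rightarrow$(ii) of Proposition \ref{prop2.2}, the sequence $\{s_n\}$ is bounded, because $|s_{n+1}|\le (|s_n|+t\max|\cC|)/b$. Hence it is eventually periodic, and at the first periodic index $\ell$ one gets
\[
s_\ell=-t\frac{c_{\ell+1}+\cdots+b^{N-1}c_{\ell+N}}{b^N-1}\in -\bigl(T(b,t\cC)\cap\Z\bigr).
\]
This includes the case $s_\ell=0$. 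Therefore $s\in t\cC+\cdots+b^{\ell-1}t\cC-b^\ell(T(b,t\cC)\cap\Z)$.

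For the converse, take $x\in T(b,t\cC)\cap\Z$. By Theorem \ref{theo2.9}, and more generally by the cycle description in Proposition \ref{prop2.2}(iii) together with Remark \ref{nronne}, $x$ lies on an integer cycle. Formula \eqref{eqqtb} then shows that $-x\in\Gamma(b,t\cC)$: the identity $-x=(\text{block})+b^N(-x)$ produces the required sequences $s_k$ periodically. Prepending the digits $tc_1,\dots,tc_n$ keeps the element in $\Gamma$. So the union equals $\Gamma(b,t\cC)$.

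For spectrality I would follow the Jorgensen--Pedersen/Dutkay--Jorgensen scheme. Orthogonality comes first. For distinct $\la,\la'\in\Gamma(b,t\cC)$, let $k$ be the first index where their digit sequences differ. Then $\la-\la'=b^{k-1}(tc-tc'+bm)$ with $c\ne c'$ and $m\in\Z$. Using $\hat\mu_{b,\D}(\xi)=\prod_j m_\D(b^{-j}\xi)$, with $m_\D$ $\Z$-periodic, the $k$-th factor equals $m_\D\bigl((tc-tc')/b\bigr)=0$, which follows from unitarity of the Hadamard matrix for $(b,\D,t\cC)$.

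For completeness I would set $Q(\xi)=\sum_{\la\in\Gamma}|\hat\mu(\xi+\la)|^2\le 1$. Decomposing by first digits shows that $Q$ is a fixed point of the Ruelle transfer operator
\[
R f(\xi)=\sum_{c\in\cC}|m_\D((\xi+tc)/b)|^2 f((\xi+tc)/b).
\]
The constant $1$ is also a fixed point.

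The main obstacle is showing that $Q\equiv1$. I would invoke the standard structure theorem for such transfer operators: the space of continuous fixed points with $f(0)=1$-type normalisations is spanned by functions attached to the $m_\D$-cycles (the ``extreme cycles'', where $|m_\D|=1$). These extreme cycles are contained in the integer cycles of $(b,t\cC)$, i.e. in $-T(b,t\cC)\cap\Z$. Because $\Gamma$ contains $-(T(b,t\cC)\cap\Z)$ by the decomposition just proved, $Q$ equals $1$ at every cycle point. Together with $0\le Q\le1$ and the maximum principle for $R$, this forces $Q\equiv1$, which gives completeness. Verifying the maximum-principle/cycle-decomposition step rigorously (including the case $\gcd(\D)\ne1$) is where I expect most of the work to lie.
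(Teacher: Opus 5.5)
\textbf{Gap: the claim that the extreme cycles are the integer cycles.} Your decomposition of $\Gamma(b,t\cC)$ is correct, and so is the orthogonality argument. The completeness strategy is the Dutkay--Jorgensen transfer-operator argument. That is also all the paper relies on: it gives no proof of this theorem and only cites \cite{DJ06}. The gap is the sentence ``these extreme cycles are contained in the integer cycles of $(b,t\cC)$''. We have $|m_\D(x)|=1$ iff $x(d-d')\in\Z$ for all $d,d'\in\D$. So the extreme cycle points are the cycle points of $x\mapsto(x+tc)/b$ lying in $g^{-1}\Z$, where $g=\gcd(\D-\D)$. They are forced to be integers only when $g=1$. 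For $g>1$ there can be non-integral extreme cycles, at which $Q$ may be $<1$, and then your maximum-principle step cannot close. This is not a technicality to check later: the statement as written, with no hypothesis on $\D$, is false.

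\textbf{Counterexamples.} Take $b=2$, $\D=\{0,3\}$, $\cC=\{0,1\}$, $t=1$.
\begin{itemize}
\item The matrix is $\frac1{\sqrt2}\begin{pmatrix}1&1\\1&-1\end{pmatrix}$, which is unitary, so this is an integral Hadamard triple.
\item $\mu_{2,\D}=\frac13L|_{[0,3]}$, and $\Gamma(2,\{0,1\})=\Z$ because $\{0,1\}$ is a complete residue system.
\item But $\int_0^3e^{-2\pi i(n-\frac13)x}\,dx=0$ for every $n\in\Z$, so $E_\Z$ is not complete. The missed extreme cycle is $\{\frac13,\frac23\}$.
\end{itemize}
A singular example with $b>\#\D$ is $b=4$, $\D=\{0,3\}$, $\cC=\{0,2\}$.
\begin{itemize}
\item Since $T(4,\{0,2\})\subset[0,\frac23]$, your decomposition gives $\Gamma(4,\{0,2\})=\La(4,\{0,2\})$.
\item The point $\frac23$ is fixed by $x\mapsto(x+2)/4$, and $m_\D(\frac23)=1$.
\item For $\la\in\La(4,\{0,2\})$, let $k$ be the first index where the digits of $\la$ (eventually $0$) differ from those of $-\frac23=2+4(-\frac23)$ (all $2$). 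Then $\la+\frac23=4^{k-1}(4s+\frac23)$ with $s\in\Z$.
\item The $k$-th factor of $\hat\mu_{4,\D}(\la+\frac23)$ is therefore $m_\D(s+\frac16)=0$. Hence $e^{2\pi i\frac23x}$ is orthogonal to $E_{\La(4,\{0,2\})}$, which is not complete.
\end{itemize}

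\textbf{Missing hypothesis and what remains.} You must add the hypothesis $\gcd(\D-\D)=1$, e.g.\ $0\in\D$ and $\gcd(\D)=1$ as in Theorem~\ref{theo3.2}. That is the setting in which the paper actually uses this result. Under it, $m_\D\equiv1$ on $\Z$, so the extreme cycles are exactly the cycles in $T(b,t\cC)\cap\Z$. (Note a sign slip: these cycle points lie in $T$, not $-T$; it is their negatives that lie in $\Gamma$.) Your outline then goes through once you supply the two ingredients taken from \cite{DJ06}. The first is continuity (indeed analyticity) of $Q$. The second is the theorem that a continuous $h\ge0$ with $Rh=h$ that vanishes on all $m_\D$-cycles is identically zero, applied to $h=1-Q$.
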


\begin{prop}\label{theo3.6}
Let $(b, \D, \cC)$ be an integral Hadamard triple with $\gcd(\D)=1$, $0\in\D\cap\cC$ and
$\cC\subset\{0, 1, \ldots, b-1\}$. Then
$\La(b, \cC)$ is a spectrum of $\mu_{b, \D}$ if and only if $b-1\not\in\cC$.
\end{prop}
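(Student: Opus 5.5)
By Theorem \ref{theo3.2} with $t=1$, $\La(b,\cC)$ is a spectrum of $\mu_{b,\D}$ exactly when $1$ is complete w.r.t. $(b,\cC)$. The hypotheses $\gcd(\D)=1$ and $0\in\D\cap\cC$ are those required there. So I will show that $1$ is complete w.r.t. $(b,\cC)$ if and only if $b-1\notin\cC$. Since $\cC\subset\{0,1,\ldots,b-1\}\subset\Z_{\ge0}$, Remark \ref{nonne} applies. It makes (i), (ii) and (iii) of Proposition \ref{prop2.2} equivalent without needing $\gcd(t,b)=1$, though here $t=1$ anyway.

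\emph{Necessity.} Suppose $b-1\in\cC$. Then Corollary \ref{corortd} with $k=1$ shows that every $t\in\N$, in particular $t=1$, is incomplete. Directly, the choice $N=1$, $c_1=b-1$ in (ii) of Proposition \ref{prop2.2} gives $(b-1)/(b-1)=1\in\Z\setminus\{0\}$. Hence $\La(b,\cC)$ is not a spectrum.

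\emph{Sufficiency.} Suppose $b-1\notin\cC$, so that $\cC\subset\{0,1,\ldots,b-2\}$. For any $c_1,\ldots,c_N\in\cC$ we have
\[
0\le \frac{c_1+c_2b+\cdots+c_Nb^{N-1}}{b^N-1}\le (b-2)\frac{1+b+\cdots+b^{N-1}}{b^N-1}=\frac{b-2}{b-1}<1 .
\]
This quotient therefore lies in $[0,1)$, and it is an integer only when it equals $0$. Thus condition (ii) of Proposition \ref{prop2.2} fails for $t=1$. By the equivalence (i)$\Leftrightarrow$(ii), valid here by Remark \ref{nonne}, $1$ is complete, i.e.\ $\Gamma(b,\cC)=\La(b,\cC)$. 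Equivalently, one can check (iv): $T(b,\cC)\subset[0,\frac{b-2}{b-1}]$ meets $\Z$ only in $0$. Theorem \ref{theo3.2} then gives that $\La(b,\cC)$ is a spectrum.

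There is no real obstacle. The only points needing care are that Theorem \ref{theo3.2} applies with $t=1$, which holds because $(b,\D,\cC)$ is itself the Hadamard triple, and that the bound $\frac{b-2}{b-1}<1$ uses $b>2$.
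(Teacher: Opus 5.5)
Your proof is correct and follows the paper's argument: necessity via Corollary \ref{corortd} and Theorem \ref{theo3.2} with $t=1$, and sufficiency from the bound $\frac{b-2}{b-1}<1$, which the paper applies through (iv) of Proposition \ref{prop2.2} ($T(b,\cC)\subset[0,\frac{b-2}{b-1}]$) and you apply to the quotient in (ii). One minor remark: $\frac{b-2}{b-1}<1$ holds for every $b\ge 2$, so that step does not actually need $b>2$.
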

\proof The necessity follows by Corollary \ref{corortd} and Theorem \ref{theo3.2}. Conversely, if
$b-1\not\in\cC$, then $T(b, \cC)\subset \left[0, \frac{b-2}{b-1}\right]$. By (iv) of Proposition
\ref{prop2.2} there is no integral cycle w.r.t $(b, \cC)$, so the sufficiency follows.\ez

\begin{thm}
Let $(b, \D)$ be an admissible pair with $\gcd(\D)=1$, $0\in \D$ and $b>\#\D$. Then there exists a
digit set $\cC$ with $0\in\cC\subset\{0, 1, \ldots, b-2\}$ such that the $(\mu_{b, \D}, \Lambda(b,
\cC)$ is a spectral pair.
\end{thm}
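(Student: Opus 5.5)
Since $(b,\D)$ is admissible, there is an integral Hadamard triple $(b,\D,\cC_0)$ with $0\in\cC_0$. The plan is to reduce $\cC_0$ modulo $b$ so that it sits inside $\{0,1,\ldots,b-2\}$, and then apply Proposition \ref{theo3.6}.

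First, replace each element of $\cC_0$ by its least nonnegative residue modulo $b$. This gives a set $\cC_1\subset\{0,\ldots,b-1\}$ with $0\in\cC_1$ and $\#\cC_1=\#\cC_0$. The elements of $\cC_0$ lie in distinct cosets by Proposition \ref{prop3.1}(i), so no collapse occurs. By Proposition \ref{prop3.1}(ii), $(b,\D,\cC_1)$ is again an integral Hadamard triple.

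If $b-1\notin\cC_1$, then Proposition \ref{theo3.6} already shows that $\La(b,\cC_1)$ is a spectrum, and we take $\cC=\cC_1$.

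Otherwise $b-1\in\cC_1$, and we shift. Pick a residue $r\in\{0,\ldots,b-1\}$ with $r\notin\cC_1$; one exists because $\#\cC_1=\#\D<b$. Set $d=b-1-r$ and consider $\cC_1-d$. By Proposition \ref{prop3.1}(iii), $(b,\D,\cC_1-d)$ is still a Hadamard triple, since a translate of $\cC$ only multiplies the columns of $H$ by unimodular constants. Take $c_0\in\cC_1$ with $c_0\equiv d\pmod b$ if such an element exists. In that case, translating by $-c_0$ puts $0$ in the new set, and the new set still omits the residue $r-c_0\equiv b-1\pmod b$.

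So what we actually need is an element $c_0\in\cC_1$ such that $c_0-1\notin\cC_1\pmod b$. Then $\cC_1-c_0$ contains $0$ and misses the class $-1\equiv b-1$.

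Such a $c_0$ exists. Look at $\cC_1$ as a subset of the cyclic group $\Z/b\Z$: it is nonempty and proper. If every $c\in\cC_1$ had $c-1\in\cC_1$, then $\cC_1$ would be closed under $x\mapsto x-1$, forcing $\cC_1=\Z/b\Z$, a contradiction. Choose such a $c_0$ and put $\cC_2=\cC_1-c_0$. It is a Hadamard triple partner for $(b,\D)$ with $0\in\cC_2$ and $b-1\notin\cC_2\pmod b$.

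Now reduce $\cC_2$ modulo $b$ into $\{0,\ldots,b-1\}$ to get $\cC$. Proposition \ref{prop3.1}(ii) keeps the triple integral Hadamard. Also $0\in\cC$, $b-1\notin\cC$, so $\cC\subset\{0,\ldots,b-2\}$. Finally, $\gcd(\D)=1$ and $0\in\D\cap\cC$, so Proposition \ref{theo3.6} gives that $(\mu_{b,\D},\La(b,\cC))$ is a spectral pair.

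The only point needing care is the translation step. It requires checking that Proposition \ref{prop3.1}(iii) with an integer shift preserves the integral Hadamard property, which holds because $HH^*$ is unchanged by multiplying columns by unimodular scalars. It also requires that the translated set still contains $0$, which is ensured by shifting by an element of $\cC_1$ itself. The combinatorial choice of $c_0$ is the key, though easy, step.
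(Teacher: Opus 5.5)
Your proof is correct and is essentially the paper's argument: the paper also reduces $\E$ modulo $b$. When $b-1$ is present, it picks a gap $e_i+1<e_{i+1}$ in the ordered residues and translates by $e_{i+1}$, an element whose predecessor is missing (i.e.\ your $c_0$), before reducing mod $b$ again and applying Proposition \ref{theo3.6}. The only slip is the intermediate paragraph with $d=b-1-r$: there $r-c_0\equiv b-1$ fails for $c_0\equiv d$ (you would need $c_0\equiv r+1$), but your subsequent criterion $c_0-1\notin\cC_1 \pmod b$ supersedes it and is exactly right.
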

\proof Let $\E$ such that $(b, \D, \E)$ is an integral Hadamard triple with $0\in\E$. If we appoint
that $a\pmod b\subset \{0, 1, \ldots, b-1\}$, then $(b, \D, \E\pmod b)$ is an integral Hadamard
triple too. Denote $\E\pmod b=\{e_1=0, e_2, \ldots, e_q\}$ in the increasing order. If $\E\pmod
b\subset\{0, 1, \ldots, b-2\}$,  take $C=\E\pmod b$ and the assertion follows by Proposition
\ref{theo3.6};   If $b-1\in\E\pmod b$, there exists $i$ such that $e_i+1<e_{i+1}$. Write
$\cC=(\E\pmod b-e_{i+1})\pmod b$. We claim that $\cC\subset\{0, 1, \ldots, b-2\}$ is the desired set.
In fact,
$\cC=\{e_1-e_{i+1}+b,e_2-e_{i+1}+b,\ldots,e_i-e_{i+1}+b,0,e_{i+2}-e_{i+1},\ldots,e_q-e_{i+1}\}$. The
maximal element of $\cC$ is $\max\{e_{i}-e_{i+1}+b, e_q-e_{i+1}\}$, which is not equal to $b-1$. This
implies the claim by Proposition \ref{theo3.6} again.
 \ez

According to Section 2 and Theorem \ref{theo3.2}, to consider the complete number problem we can
assume that $\gcd(t, b)=1$ without loss of generality.

To prove Theorem \ref{thmain}, the key is the following theorem:
\begin{thm}\label{theobasic}
Let $t\in\N$ with $\gcd(t, b)=1$ and let $(b, \D, \cC)$ be a canonical Hadamard triple. Then the
following statements are equivalent:\\
\rm{(i)} The  $(\mu_{b, \D}, t\La(b, \cC))$ is not a spectral pair;\\
\rm{(ii)} $t$ is an incomplete number w.r.t. $(b, \cC)$;\\
\rm{(iii)}  $T(b, t\cC)\cap\N\ne\emptyset$;\\
\rm{(iv)} There exist $c_k\in\cC$, $1\le k\le n$, such that
\begin{eqnarray*}
  t\frac{c_1+c_2b+\cdots+c_{n}b^{n-1}}{b^{n}-1}\in tT(b, \cC)\cap\N,
\end{eqnarray*}
where $O_b(t)$ is the minimal $n$ such that the above holds;\\
\rm{(iv)} There exists $x\in \N$ such that
\[\left\{b^kx\pmod t\right\}_{k=1}^{O_b(t)}\subset T(b, t\cC)\cap\N,\]
where the elements in the left are pairwise different.
\end{thm}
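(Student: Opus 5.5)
Throughout, $\cC\subset\{0,1,\ldots,b-2\}$, $0\in\cC$, so $\cC$ is a set of distinct residues mod $b$, $T(b,t\cC)\subset[0,\,t\frac{b-2}{b-1}]$, and every element of $T(b,t\cC)$ is nonnegative. The plan is to run the chain (i)$\Leftrightarrow$(ii)$\Leftrightarrow$(iii)$\Leftrightarrow$(iv), and then show the last item (the second (iv), call it (v)) is equivalent to (iii).

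\emph{(i)$\Leftrightarrow$(ii).} Since $(b,\D,\cC)$ is a Hadamard triple and $\gcd(t,b)=1$, the set $t\cC$ represents the same residues mod $b$ as a permutation of $\cC$. Hence $(b,\D,t\cC)$ is still a Hadamard triple. One can argue this via Proposition \ref{prop3.1}(ii), or directly: the characters $c\mapsto e^{-2\pi i cd/b}$ are only permuted by multiplication by $t$ on $\Z/b\Z$. We also have $\gcd(\D)=1$ and $0\in\D\cap\cC$, because the canonical triple comes from the theorem just proved. Theorem \ref{theo3.2} then gives (i)$\Leftrightarrow$(ii) directly.

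\emph{(ii)$\Leftrightarrow$(iii)$\Leftrightarrow$(iv).} By Proposition \ref{prop2.2} (i)$\Leftrightarrow$(iv), $t$ is incomplete iff $T(b,t\cC)\cap\Z\ne\{0\}$. Since $T(b,t\cC)\subset[0,\infty)$, this is the same as $T(b,t\cC)\cap\N\neq\emptyset$. For (iv), Proposition \ref{prop2.2}(ii) and Theorem \ref{theo2.9} show that any positive integer $x\in T(b,t\cC)$ is a cycle point, of the form $t\frac{c_1+\cdots+c_nb^{n-1}}{b^n-1}$ with $n$ the period. Conversely such an integer lies in $T(b,t\cC)$ by periodic expansion.

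The claim that the minimal $n$ equals $O_b(t)$ is where I expect the real work. First, $O_b(t)\mid n$. To see this, reduce the fraction: choose the cycle among all cycles with $x$ coprime to $t$, or pass to the primitive factor as in Proposition \ref{prop2.12}(2). Then $t\mid (b^n-1)\cdot(\text{integer})$ with coprimality, forcing $b^n\equiv1\pmod t$.

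Second, a cycle point $x_1$ satisfies $x_{k+1}\equiv b^{-1}x_k\pmod t$, so $x_{O_b(t)+1}\equiv x_1\pmod t$. Then $(x_{O_b(t)+1}-x_1)/t$ is an integer lying in $T(b,\cC-\cC)\subset\bigl(-\tfrac{b-2}{b-1},\tfrac{b-2}{b-1}\bigr)$, because $\cC-\cC\subset\{-(b-2),\ldots,b-2\}$. It is therefore $0$, and the period is exactly $O_b(t)$. This is exactly Proposition \ref{prop2.12}(3), whose hypothesis $T(b,\cC-\cC)\cap\Z=\{0\}$ is automatic here. The subtle point is that Proposition \ref{prop2.12} is stated for primitive $t$. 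I will redo its argument directly, using only $\gcd(t,b)=1$ and the congruence $x_{k+1}\equiv b^{-1}x_k$. The step $O_b(t)\mid n$ will be handled by the congruence argument alone: the period $n$ divides $O_b(t)$ by the displayed bound. Conversely the fraction form with denominator $b^{O_b(t)}-1$ is obtained by grouping the period-$n$ expansion into blocks of length $O_b(t)$. So I will phrase (iv) as ``the cycle has length $O_b(t)$'' rather than proving minimality over $t$.

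\emph{(iii)$\Leftrightarrow$(v).} Take $x\in T(b,t\cC)\cap\N$; it is a cycle point with period $O_b(t)$. The cycle points are $x_k\equiv b^{-(k-1)}x\pmod t$, and they all lie in $[0,t)$, since $T(b,t\cC)\subset[0,t)$. Therefore $x_k$ equals the residue $b^{-(k-1)}x \bmod t$. As $k$ runs over a full period, these are exactly $\{b^kx\bmod t\}_{k=1}^{O_b(t)}$. They are pairwise distinct because the cycle has minimal period $O_b(t)$, and they are positive by Theorem \ref{theo2.9} and the disjointness from $\{0\}$. The reverse implication (v)$\Rightarrow$(iii) is trivial.
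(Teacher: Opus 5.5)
Your steps (i)$\Leftrightarrow$(ii)$\Leftrightarrow$(iii) match the paper's route via Theorem~\ref{theo3.2} and Proposition~\ref{prop2.2}. The genuine gap is the claim that a nontrivial cycle has minimal period exactly $O_b(t)$. Both the minimality in (iv) and the pairwise distinctness in (v) depend on it. Your $T(b,\cC-\cC)$ argument is correct, but it only gives $x_{O_b(t)+1}=x_1$, i.e.\ the period $n$ \emph{divides} $O_b(t)$. Your sentence saying that $O_b(t)\mid n$ ``will be handled by the congruence argument alone'' actually proves the opposite divisibility. Rephrasing (iv) as ``the cycle has length $O_b(t)$'' does not avoid this.

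To get $O_b(t)\mid n$ you need $\gcd(x,t)=1$ for the cycle point $x$; then $x(b^n-1)=tm$ forces $t\mid b^n-1$. This is Proposition~\ref{prop2.12}(2), and it holds only when $t$ is primitively incomplete. Indeed, if $d=\gcd(x,t)>1$, then $x/d=(t/d)\,m/(b^n-1)$ makes the proper divisor $t/d$ incomplete by Proposition~\ref{prop2.2}(ii). In general you cannot ``choose a cycle with $x$ coprime to $t$'', and no argument using only $\gcd(t,b)=1$ can work.

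A counterexample: take the canonical triple $(4,\{0,1\},\{0,2\})$ and $t=9$. Then $T(4,9\cC)\subset[0,6]$. Every cycle point is even, since $x_k=4x_{k+1}-9c_k$ with $c_k\in\{0,2\}$. The candidates $2\mapsto 5$ and $4\mapsto 1$ are dead ends, so the only nonzero cycle is the fixed point $6=(6+18)/4$. Thus $9$ is incomplete, but the minimal $n$ in (iv) is $1\neq O_4(9)=3$. Also (v) fails, because three distinct residues cannot lie in $T(4,9\cC)\cap\N=\{6\}$.

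The paper's proof gets distinctness from exactly this ingredient, Proposition~\ref{prop2.12}: from $b^jx=b^ix+tm$ and $\gcd(x,t)=1$ it deduces $t\mid b^j-b^i$. So (iv) and (v) are really statements about primitively incomplete $t$. That is the only case used later, in Proposition~\ref{prop4.6} and Theorem~\ref{theo4.11}. Under that hypothesis your argument closes: $\gcd(x,t)=1$ gives $O_b(t)\mid n$, your bound gives $n\mid O_b(t)$, so $n=O_b(t)$ and the points $b^kx\bmod t$, $1\le k\le O_b(t)$, are distinct. Your remark also shows that the hypothesis $T(b,\cC-\cC)\cap\Z=\{0\}$ in Proposition~\ref{prop2.12}(3) is automatic for canonical $\cC$.

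One smaller correction in (i)$\Leftrightarrow$(ii): $t\cC$ need not be congruent to $\cC$ modulo $b$. For $(12,\{0,1,6,7\},\{0,1,6,7\})$ and $t=5$ one gets $t\cC\equiv\{0,5,6,11\}\pmod{12}$, so Proposition~\ref{prop3.1}(ii) does not apply. $(b,\D,t\cC)$ is nevertheless Hadamard, because $\sum_{d\in\D}z^d$ has integer coefficients. Its vanishing at $e^{-2\pi i(c-c')/b}$ therefore transfers to the Galois conjugate $e^{-2\pi i t(c-c')/b}$.
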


Let $\gcd(t, b)=1$. Recall that the order of $b$ modula  $t$ is defined by
\begin{eqnarray*}
  O_b(t)=\min\{k: b^k\equiv1\pmod t\}.
\end{eqnarray*}
Note that $O_b(t)$ is a factor of the Euler function $\psi(t)$. In this paper we stipulate for
$p\in\Z$
\begin{eqnarray*}
  p\pmod t\in \{0, 1, \ldots, t-1\}.
\end{eqnarray*}
\begin{thm}\label{theo2.7}
Let $t\in\N$ with $\gcd(t, b)=1$ and $0\in\cC\subset\{0, 1, \ldots, b-2\}$. Then  there is a
non-trivial integral cycle w.r.t. $(b, t\cC)$ if and only if there exists $x\in \N$  such that
  $$\left\{b^{O_b(t)-k}x\pmod t\right\}_{k=1}^{O_b(t)}\subset T(b, t\cC)\cap\N.$$
\end{thm}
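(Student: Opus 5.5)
Both directions rest on the dynamics of a nontrivial integer cycle $\{x_k\}$ w.r.t. $(b,t\cC)$, that is, $x_{k+1}=(x_k+tc_k)/b$. We reduce it modulo $t$ and use Theorem \ref{theo2.9} to identify the cycle points with $T(b,t\cC)\cap\Z$.

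\emph{Necessity.} Suppose a nontrivial integer cycle $\{x_k\}$ exists. Since $\cC\subset\{0,\ldots,b-2\}$, we have
\[
T(b,t\cC)\subset\Bigl[0,\tfrac{t(b-2)}{b-1}\Bigr]\subset[0,t).
\]
By Theorem \ref{theo2.9} every cycle point is a nonzero element of $T(b,t\cC)\cap\Z$, so it lies in $\N\cap[1,t-1]$ and equals its own residue mod $t$. The relation $bx_{k+1}=x_k+tc_k$ gives $x_{k+1}\equiv b^{-1}x_k\pmod t$. As $b^{O_b(t)}\equiv1\pmod t$, this means $x_{k+1}\equiv b^{O_b(t)-1}x_k\pmod t$, and by induction $x_{k+1}\equiv b^{O_b(t)-k}x_1\pmod t$. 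Put $x=x_1\in\N$. For $k=1,\ldots,O_b(t)$ the residue $b^{O_b(t)-k}x\pmod t$ equals $x_{k+1}$ (indices read cyclically), which is a point of the cycle. Hence the whole set lies in $T(b,t\cC)\cap\N$. The period $N$ of the cycle may exceed $O_b(t)$, but that does not matter here, because we only need the first $O_b(t)$ residues to be cycle points.

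\emph{Sufficiency.} Suppose $x\in\N$ has all residues $r_k:=b^{O_b(t)-k}x\pmod t$, $1\le k\le O_b(t)$, in $T(b,t\cC)\cap\N$. In particular $r_{O_b(t)}=x\pmod t$ is a positive integer in $T(b,t\cC)$. Proposition \ref{prop2.2}, (iv)$\Rightarrow$(iii), then produces a nontrivial integer cycle directly, since $T(b,t\cC)\cap\Z\neq\{0\}$ and $\gcd(t,b)=1$. This direction is therefore immediate; we could even drop the hypothesis on all but one of the residues, which explains why the statement is phrased as an orbit condition. One can also describe the cycle explicitly. Each $r\in T(b,t\cC)\cap\N$ is a cycle point by Theorem \ref{theo2.9}. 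Its successor is the unique $r'\equiv b^{-1}r\pmod t$ in the cycle, and since $r'\in[0,t)$ it is exactly $b^{O_b(t)-1}r\pmod t$. So the orbit $r_1,\ldots,r_{O_b(t)}$ is traced by the cycle.

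\emph{Main obstacle.} The delicate point is the identification of a cycle point with its residue mod $t$. This needs $T(b,t\cC)\subset[0,t)$, and that inclusion is where $b-1\notin\cC$ and $\cC\subset\Z_{\ge0}$ enter. We also need Theorem \ref{theo2.9} to ensure the successor map stays inside $T(b,t\cC)\cap\Z$. Finally, we must check that Theorem \ref{theo2.9} applies: the elements of $\cC$ lie in distinct cosets of $\Z/b\Z$, and $t\cC$ still has this property because $\gcd(t,b)=1$.
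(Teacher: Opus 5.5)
Your proposal is correct. The necessity direction is the paper's own argument: reduce $bx_{k+1}=x_k+tc_k$ modulo $t$, then use $T(b,t\cC)\subset[0,t(b-2)/(b-1)]\subset[0,t)$ to identify each cycle point with its residue. You attribute the positivity of the cycle points to Theorem \ref{theo2.9}. It follows more directly from \eqref{eq2.1}: each cycle point is $t$ times a combination of nonnegative digits that are not all zero over a period.

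Your sufficiency direction takes a different route. The paper builds the cycle from the orbit itself. It sets $x_k=b^{O_b(t)-k}x\pmod t$ and writes $bx_{k+1}=x_k+td_k$. It then shows $d_k=c_1\in\cC$: the quantity $td_k-tc_1=(bx_{k+1}-tc_1)-x_k$ is a difference of two points of $T(b,t\cC)\subset[0,t)$, so it must vanish. Finally it checks that the sequence closes up after $O_b(t)$ steps.

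You instead use only the term $k=O_b(t)$, which gives a positive integer in $T(b,t\cC)$, and apply Proposition \ref{prop2.2}, (iv)$\Rightarrow$(iii). This is shorter. It also correctly shows that the orbit hypothesis is no stronger than $T(b,t\cC)\cap\N\neq\emptyset$, so the theorem is essentially Proposition \ref{prop2.2} plus the necessity computation.

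The paper's construction has two advantages. It is self-contained: it needs neither the eventual-periodicity argument behind Proposition \ref{prop2.2}, (iv)$\Rightarrow$(ii), nor Theorem \ref{theo2.9}. It also shows directly that the given orbit, traversed in order, is the cycle, with $O_b(t)$ as a period. That is the form used later in Theorem \ref{theobasic} and Proposition \ref{prop4.6}, where the cycle points are counted. Your closing remark via Theorem \ref{theo2.9} recovers this identification, so nothing is lost, but it then relies on the full strength of that theorem: every integer point of $T(b,t\cC)$ has purely periodic expansion.
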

\proof We firstly show the necessity. By the definition of a non-trivial integral cycle
$\{x_k\}_{k=1}^\infty$ w.r.t. $(b, t\cC)$  and Proposition \ref{prop2.2}, we have $x_1\in tT(b,
\cC)\cap\N$ and for $1\le k\le O_b(t)$
\begin{eqnarray*}
  b^{k}x_{k+1}=b^{k-1}x_k+tb^{k-1}c_k=\cdots=x_1+tc_1+tbc_2+\cdots+tb^{k-1}c_k.
\end{eqnarray*}
Then
\[
x_{k+1} \equiv b^{O_b(t)-k}x_1 \pmod{t}.
\]
Moreover, since all
\begin{eqnarray}\label{add2.6}
  x_m\in T(b, t\cC)\subset \bigg[0, t\frac{b-2}{b-1}\bigg]\subset [0,t),
\end{eqnarray}
we actually have $x_{k+1} = b^{O_b(t)-k}x_1 \pmod{t}$. Taking $x = x_1$, we conclude that
\begin{eqnarray*}
\left\{b^{O_b(t)-k}x \pmod{t}\right\}_{k=1}^{O_b(t)} = \{x_2, x_3, \ldots, x_{O_b(t)+1}\} \subset
T(b, tC) \cap \N
\end{eqnarray*}
and the necessity follows.

Conversely, set $x_k=b^{O_b(t)-k}x\pmod t$ for $1\le k\le O_b(t)$ and $x_{k+O_b(t)}=x_{k}$ for $k\ge
1$. Since
$ bx_{k+1}=x_k\pmod t$ for $1\le k< O_b(t)$, there exists $d_k\in\Z$ such that $bx_{k+1}=x_k+td_k$.
By $x_{k+1}, x_k\in T(b, t\cC)$, there exist $c_j\in\cC$ such that $x_{k+1}=\sum_{j=1}^\infty tc_j
b^{-j}$. Then $bx_{k+1}-tc_1\in T(b, t\cC)$. Thus by \eqref{add2.6}
\begin{eqnarray*}
  td_k=tc_1+bx_{k+1}-tc_1-x_k\in tc_1+tT(b, \pm\cC)\subset tc_1+t(-1, 1).
\end{eqnarray*}
This forces $d_k=c_1\in\cC$. To get the proof we need to show that
 $$x_{O_b(t)+1}=\frac{x_{O_b(t)}+td_{O_b(t)}}b=x_1.$$
 In fact, by definition we have $x_{O_b(t)}=x\pmod t$ and $x_1=b^{O_b(t)-1}x\pmod t$. Then $
 bx_{1}\equiv x_{O_b(t)}\pmod t$. Repeating the above proof the desired result follows.
 Hence, the sequence $\{x_k\}_{k=1}^\infty$ is a non-trivial integral cycle w.r.t. $(b, t\cC)$.
\ez

\noindent{\bf Proof of Theorem \ref{theobasic}.}
 By the results in Section 2, we only need to show the minimality of $O_b(t)$ in both (ii) and (iii)
 and the pairwise distinctness in (iv). However, these three assertions are equivalent by Proposition
 \ref{prop2.2} and Theorem \ref{theo2.7}. Now we prove the pairwise distinctness in (iv). If there
 exist $1\le i<j\le O_b(t)$ such that $b^jx=b^ix\pmod t$, that is
$b^jx=b^ix+tm$ for some $m\in\Z$. By Proposition \ref{prop2.12} we have $x\mid m$ and thus
$b^j=b^i\pmod t$, which contradicts the definition of $O_b(t)$. Hence we finish the proof.
\ez

\section{The proofs of main results}
\begin{thm}\label{thmpii}
Let $(b, \D, \cC)$ be a canonical Hadamard triple. There are infinitely many primitively non spectral
eigenvalues of the spectral pair $(\mu_{b, \D}, {}\allowbreak\La(b, \cC))$.
\end{thm}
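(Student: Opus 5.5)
The plan is to translate the statement into the language of complete integers and then apply Theorem \ref{theonotd}. Let $(b,\D,\cC)$ be a canonical Hadamard triple, so $0\in\cC\subset\{0,1,\ldots,b-2\}$ and $\cC\subset\Z_{\geq0}$. We may also assume $\gcd(\D)=1$ and $0\in\D$, as in the setting of Theorem \ref{theo3.2}.

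First I would check the hypothesis of Theorem \ref{theonotd}, namely that $t=1$ is complete w.r.t. $(b,\cC)$. Since $b-1\notin\cC$, Proposition \ref{theo3.6} shows that $\La(b,\cC)$ is a spectrum of $\mu_{b,\D}$, so $1$ is a spectral eigenvalue. Equivalently, $T(b,\cC)\subset[0,\frac{b-2}{b-1}]$, so $T(b,\cC)\cap\N=\emptyset$, and by Proposition \ref{prop2.2}(iv) together with Remark \ref{nonne}, $1$ is complete. Because $\cC\subset\Z_{\geq0}$, Theorem \ref{theonotd} then applies and yields infinitely many primitively incomplete integers $t$ w.r.t. $(b,\cC)$.

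Next I would transfer incompleteness to non-spectrality. Let $t$ be primitively incomplete. By Proposition \ref{prop2.12}(1), $\gcd(t,b)=1$. By Theorem \ref{theobasic}, (i)$\Leftrightarrow$(ii), $t$ is not a spectral eigenvalue, that is, $(\mu_{b,\D},t\La(b,\cC))$ is not a spectral pair. Every proper factor $t'$ of $t$ is complete and also satisfies $\gcd(t',b)=1$, so by Theorem \ref{theobasic} again each such $t'$ is a spectral eigenvalue. Hence every primitively incomplete $t$ is a primitively non spectral integer eigenvalue.

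The main obstacle is ensuring that the dictionary of Theorem \ref{theobasic} (or Theorem \ref{theo3.2}) holds under the normalization $\gcd(\D)=1$, $0\in\D$. Concretely, it requires that $(b,\D,t\cC)$ is a Hadamard triple when $\gcd(t,b)=1$. This holds because multiplying $\cC$ by a unit modulo $b$ preserves the Hadamard property, since the exponential sums only permute their Galois conjugates. For proper factors $t'$ one must also rule out the possibility $\gcd(t',b)>1$, but that case is already excluded by $t'\mid t$. With these points checked, the infinitude follows directly from Theorem \ref{theonotd}.
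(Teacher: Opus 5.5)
Your proposal is correct and follows the paper's route exactly: the paper proves this as a direct corollary of Theorem \ref{theonotd}, which gives infinitely many primitively incomplete integers (applicable since $\cC\subset\Z_{\geq0}$ and $1$ is complete), combined with the dictionary between completeness and spectral eigenvalues in Theorem \ref{theobasic}. You simply make explicit the steps the paper leaves implicit: that $1$ is complete because $b-1\notin\cC$, that a primitively incomplete $t$ satisfies $\gcd(t,b)=1$ by Proposition \ref{prop2.12}(1) (so its proper factors are coprime to $b$ as well), and that $(b,\D,t\cC)$ remains a Hadamard triple when $\gcd(t,b)=1$.
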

\proof
It is a direct corollary of Theorem \ref{theonotd} and Theorem \ref{theobasic}.
\ez

We now turn to the complementary case of Theorem \ref{thmpii}, which remains open and leads to the
following conjecture:
\begin{conj}\label{conj4.2}
Let \((b,\mathcal{D},\cC)\) be a canonical Hadamard triple. Then there are infinitely many
primitively spectral integer eigenvalues of the spectral pair
\((\mu_{b,\mathcal{D}},\Lambda(b,\cC))\).
\end{conj}

In the rest of this section, we present several results related to this conjecture.

Let $t\in\N$ with $\gcd(t, b)=1$. As usual write $\Z/t\Z=\{[0]_t, [1]_t, \ldots, [t-1]_t\}$, where
$[x]_t$ denotes the equivalence class of $x$ modulo $t$. For $x\in\N$  we denote
\begin{eqnarray*}
  G_b(t, x)=\left\{[bx]_t, [b^2x]_t, \ldots, [b^{O_b(t)}x]_t\right\}\subset \Z/t\Z.
\end{eqnarray*}
and simply denote $G_b(t)=G_b(t, 1)$.

\begin{lem} \label{lem4.3}
Let $(b, \D, \cC)$ be a canonical Hadamard triple with  $b-1\not\in \cC+\cC$ and let $t\in\N$ with
$\gcd(t, b)=1$. If $[-1]_t\in G_b(t)$, then $t$ is a spectral eigenvalue of the spectral pair
$(\mu_{b, \D}, \La(b, \cC))$.
\end{lem}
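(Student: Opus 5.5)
The plan is to argue by contradiction through Theorem \ref{theobasic}, using the symmetry $x\mapsto t-x$ that the hypothesis $[-1]_t\in G_b(t)$ forces on any integer cycle. This is the same idea as in the example with $b=4$, $\cC=\{0,2\}$, $t=5^n$.

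Suppose $t$ is not a spectral eigenvalue. By Theorem \ref{theobasic} and Proposition \ref{prop2.2}, there is a non-trivial integer cycle $\{x_k\}_{k=1}^\infty$ w.r.t. $(b,t\cC)$ with digits $c_k\in\cC$. By Theorem \ref{theo2.7}, together with \eqref{add2.6}, the cycle points are exactly the residues $b^{j}x_1 \pmod t$, and they all lie in $T(b,t\cC)\cap\N\subset(0,t)$. By Proposition \ref{prop2.12}(2) (or directly, since $\gcd(t,b)=1$), no cycle point is divisible by $t$. Now choose $m$ with $b^m\equiv -1\pmod t$. Then $x':=b^m x_1\pmod t=t-x_1$ is again a point of the same cycle, so $x'=x_{j_0}$ for some index $j_0$.

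Next I use the explicit form \eqref{eq2.1} of cycle points, taking a common period $N$:
\[
x_1=t\frac{\sum_{k=1}^{N}c_kb^{k-1}}{b^N-1},\qquad t-x_1=x_{j_0}=t\frac{\sum_{k=1}^{N}c_{j_0+k-1}b^{k-1}}{b^N-1}.
\]
Adding these and dividing by $t$ gives
\[
b^N-1=\sum_{k=1}^{N}e_kb^{k-1},\qquad e_k:=c_k+c_{j_0+k-1}\in\cC+\cC\subset\{0,1,\ldots,2b-4\},
\]
and the sequence $(e_k)$ extends $N$-periodically. Equivalently, $1=\sum_{j\ge1}e'_jb^{-j}$, where $(e'_j)$ is a periodic sequence with values in $\cC+\cC$.

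The final step, which is where the hypothesis $b-1\notin\cC+\cC$ enters, is to show this expansion of $1$ is impossible. Write $1=e'_1/b+r/b$ with $r=\sum_{j\ge1}e'_{j+1}b^{-j}$. Since $e'_j\le 2b-4$, we get $0\le r\le (2b-4)/(b-1)<2$. Because $e'_1=b-r$ is an integer, $r\in\{0,1\}$. If $r=0$, all later digits vanish, and by periodicity every $e'_j=0$, contradicting $1>0$. Hence $r=1$, so $e'_1=b-1\in\cC+\cC$, contradicting the hypothesis.

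I expect the only delicate point to be the identification $t-x_1$ is a cycle point with the \emph{same} periodic digit structure, so that the digitwise sum really is a periodic $(\cC+\cC)$-expansion. This is handled by Proposition \ref{prop2.10} and Theorem \ref{theo2.7}, which say the orbit of $x_1$ under multiplication by $b$ modulo $t$ is exactly the cycle. Having excluded any non-trivial cycle, Theorem \ref{theobasic} yields that $(\mu_{b,\D},t\La(b,\cC))$ is a spectral pair.
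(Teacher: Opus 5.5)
Your proof is correct and follows the paper's argument. You use $b^m\equiv -1\pmod t$ to pair the cycle point $x$ with $t-x$, add the two periodic cycle representations to get $b^N-1=\sum_k e_kb^{k-1}$ with $e_k\in\cC+\cC\subset[0,2b-4]$, and force $b-1\in\cC+\cC$. The only difference is in the last step: the paper reads off the lowest digit by reducing modulo $b$ ($c_1+c_1^*\equiv -1\pmod b$, hence $=b-1$), which is slightly quicker than your leading-digit size argument, though yours is also valid.
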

\proof If $t\in \N$ is incomplete, by Theorem \ref{theo2.7} and Theorem \ref{theo3.2}, then there
exists $x\in T(b, t\cC)\cap\N$ such that
\[\left\{b^kx\pmod t\right\}_{k=1}^{O_b(t)}\subset T(b, t\cC)\cap\N.\]

Since $[-1]_t\in G_b(t)$, it follows that there exists $n$, $1\le n\leq O_b(t)$,  such that
$b^n\equiv-1\pmod t$. Denote $x^*=b^nx\pmod t$. Then
$x^*=t-x$ because both $x$ and $x^*$ in $T(b, t\cC)\subset [0, t\frac{b-2}{b-1}]$.

According to Proposition \ref{prop2.2} and Theorem \ref{theo2.7}, one has
\begin{eqnarray*}
  x=t\frac{c_1+bc_2+\cdots+b^{O_b(t)-1}c_{O_b(t)}}{b^{O_b(t)}-1}, \qquad
  x^*=t\frac{c_1^*+bc_2^*+\cdots+ b^{O_b(t)-1}c^*_{O_b(t)}}{b^{O_b(t)}-1},
\end{eqnarray*}
where all $c_k$ and $c_k^*$ lie in $\cC~(\subset\{0, 1, \ldots, b-2\})$. Then
\begin{eqnarray*}
 (c_1+ c_1^*)+b(c_2+c_2^*)+\cdots+b^{O_b(t)-1}(c_{O_b(t)}+c^*_{O_b(t)})=b^{O_b(t)}-1
\end{eqnarray*}
This yields $c_1+ c_1^*\equiv-1\pmod b$. Since $0\le c_k, c_k^*\le b-2$ for all $c_k\in\cC$, one has
$c_1+ c_1^*=b-1$, which contradicts the assumption $b-1\notin \cC+\cC$. Hence the result follows. \ez

\begin{lem}\label{lem4.4}
Let $(b, \D, \cC)$ be a canonical Hadamard triple with  $b-1\not\in \cC+\cC$. If $O_b(t)$ is even and
$t>2$ is a prime, then $t$ is a spectral eigenvalue of the spectral pair $(\mu_{b, \D}, \La(b, \cC)$.
\end{lem}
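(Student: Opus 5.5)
The plan is to reduce Lemma \ref{lem4.4} to Lemma \ref{lem4.3}. That lemma already covers any $t$ with $\gcd(t,b)=1$ under the same hypothesis $b-1\notin\cC+\cC$, so it suffices to check that $[-1]_t\in G_b(t)$ whenever $t>2$ is prime and $O_b(t)$ is even.

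First I confirm the standing assumption $\gcd(t,b)=1$. It is implicit, since $O_b(t)$ is defined only in that case. Alternatively, if the prime $t$ divided $b$, no power of $b$ would be $\equiv 1\pmod t$, so $O_b(t)$ would be undefined.

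Next I set $m=O_b(t)=2r$ with $r\ge1$ and put $y=b^{r}$. Then $y^2=b^{m}\equiv 1\pmod t$, so $t\mid (y-1)(y+1)$. Because $t$ is prime, $t\mid y-1$ or $t\mid y+1$. The first case would give $b^r\equiv1\pmod t$ with $1\le r<m$, contradicting the minimality of $O_b(t)$. Hence $b^r\equiv -1\pmod t$.

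Since $1\le r\le O_b(t)$, the class $[b^r]_t=[-1]_t$ is one of the listed elements $[b^k\cdot 1]_t$, $k=1,\dots,O_b(t)$, of $G_b(t)=G_b(t,1)$. Here $t>2$ ensures $[-1]_t\ne[1]_t$, although the argument does not actually need this. Lemma \ref{lem4.3} then gives that $t$ is a spectral eigenvalue of $(\mu_{b,\D},\La(b,\cC))$.

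There is no real obstacle. The only substantive point is that primality of $t$ is used exactly once, to pass from $y^2\equiv1$ to $y\equiv\pm1$. The reduction to Lemma \ref{lem4.3} handles the rest, since that lemma already rules out incompleteness via Theorems \ref{theo2.7} and \ref{theo3.2} and the digit-sum contradiction $c_1+c_1^*=b-1$.
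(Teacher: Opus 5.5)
Your proposal is correct and follows the paper's own proof essentially verbatim. Primality of $t$ forces $b^{O_b(t)/2}\equiv\pm1\pmod t$, and minimality of $O_b(t)$ excludes $+1$, so $[-1]_t\in G_b(t)$ and Lemma \ref{lem4.3} applies.
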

\proof By $b^{O_b(t)}\equiv 1\pmod t$ and $t$ is a prime, we have $b^{O_b(t)/2}\equiv-1\pmod t$ or
$b^{O_b(t)/2}\equiv1\pmod t$. By the definition of $O_b(t)$ we have $b^{O_b(t)/2}\equiv-1\pmod t$,
that is, $[-1]_t\in G_b(t)$. Then the result follows by Lemma \ref{lem4.3}.\ez

\begin{thm}
  Conjecture \ref{conj4.2} is true if $b-1\not\in \cC+\cC$.
\end{thm}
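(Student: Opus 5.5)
Since every odd prime $t$ with $\gcd(t,b)=1$ and $O_b(t)$ even is, by Lemma \ref{lem4.4}, a spectral eigenvalue of $(\mu_{b,\D},\La(b,\cC))$, and any such prime is automatically primitively spectral (it is coprime to $b$), it suffices to prove that there are infinitely many primes $p\nmid 2b$ for which $O_b(p)$ is even. Equivalently, we need infinitely many odd primes $p\nmid b$ for which $b$ is not a quadratic-residue-type element in the strong sense that $b$ does not lie in the subgroup of odd-order elements of $(\Z/p\Z)^*$. The simplest sufficient condition is that some power $b^n\equiv -1\pmod p$, i.e. $p\mid b^n+1$ for some $n\ge1$.

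So the plan is to produce infinitely many primes dividing numbers of the form $b^{n}+1$, in a way that forces the order to be even. I will use the Zsigmondy/elementary argument with $n=2^m$. Let $p$ be an odd prime divisor of $b^{2^m}+1$ with $p\nmid b$; note $b^{2^m}+1$ is coprime to $b$. If $b$ is odd then $b^{2^m}+1\equiv 2\pmod 4$, so $b^{2^m}+1>2$ still has an odd prime factor; if $b$ is even, $b^{2^m}+1$ is odd and $>1$. For such $p$ we have $b^{2^m}\equiv-1$ and $b^{2^{m+1}}\equiv 1\pmod p$, hence $O_b(p)=2^{m+1}$ exactly, which is even. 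Moreover, distinct $m$ give distinct primes, since $O_b(p)$ determines $m$. Here $b>2$, so $b^{2^m}\not\equiv 1$, which rules out degenerate cases.

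The conclusion follows: for each $m\ge0$ choose an odd prime $p_m\mid b^{2^m}+1$. These primes are pairwise distinct, coprime to $b$, and satisfy $O_b(p_m)=2^{m+1}$. Lemma \ref{lem4.4}, which uses the hypothesis $b-1\notin\cC+\cC$, then shows each $p_m$ is a spectral eigenvalue, giving infinitely many primitively spectral integer eigenvalues, which is Conjecture \ref{conj4.2}. One could also invoke Lemma \ref{lem4.3} directly, since $[-1]_{p_m}=[b^{2^m}]_{p_m}\in G_b(p_m)$.

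The only point needing care is the existence of an odd prime factor of $b^{2^m}+1$; as noted, $b^{2^m}+1$ is either odd and $>1$, or twice an odd number $>1$ since $b\ge3$. Beyond that I expect no real obstacle, and this route avoids Artin's conjecture entirely.
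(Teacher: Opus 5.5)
Your proof is correct, but it reaches the conclusion more economically than the paper does. Both arguments reduce to Lemma \ref{lem4.4}. After that, the paper splits into two cases:
\begin{itemize}
\item for non-square $b$, it quotes the result from \cite{IR90} that $b$ is a quadratic nonresidue modulo infinitely many primes $t$, and uses Euler's criterion $b^{(t-1)/2}\equiv -1\pmod t$ to force $O_b(t)$ even;
\item only for $b=a^2$ does it take odd prime divisors $p_k$ of $a^{2^{k+1}}+1=b^{2^k}+1$, and it proves their distinctness by a separate congruence argument.
\end{itemize}

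You noticed that this second construction never uses that $b$ is a square, so it covers every $b$ at once. This removes the case split and the appeal to quadratic-residue theory. It also gives the exact order $O_b(p_m)=2^{m+1}$, which makes distinctness immediate. What the paper's Case 1 gains is a much larger family of eigenvalues: a positive-density set of primes rather than divisors of one Fermat-type number per index. That extra size is irrelevant to the infinitude claim.

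There is one small slip to fix. For odd $b$ the congruence $b^{2^m}+1\equiv 2\pmod 4$ fails at $m=0$. Indeed $b+1$ can be a power of $2$ (e.g.\ $b=3$ or $b=7$), so $p_0$ need not exist. Index over $m\ge 1$ instead; there $b^{2^m}\equiv 1\pmod 8$ and your argument goes through unchanged. Also, $b^{2^m}\not\equiv 1\pmod p$ follows from $p$ being odd together with $b^{2^m}\equiv -1\pmod p$, not from $b>2$.
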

\proof
By Lemma \ref{lem4.4}, it suffices to show that there are infinitely many primes $t$ such that
$O_b(t)$ is even. We divide the proof into the following two cases.

Case 1: $b$ is not a perfect square. By Theorem 3 in Section 5.2 of \cite{IR90}, there are infinitely
many primes $t>2$ with $\gcd(t,b)=1$ such that $b$ is quadratic nonresidue modulo $t$. For any such
prime $t$, Euler's criterion gives
\begin{equation}\label{eqcocon}
b^{\frac{t-1}{2}}\equiv-1 \pmod{t}.
\end{equation}

Let $d = O_b(t)$. Then $d \mid (t-1)$. Suppose for contradiction that $d$ is odd. Since $d \mid
(t-1)$, write $t-1 = d \cdot k$ for some integer $k$, which fores $k$ to be even. Then
\[
b^{\frac{t-1}{2}} = (b^d)^{\frac{k}{2}} \equiv 1^{\frac{k}{2}} = 1 \pmod{t}.
\]
This contradicts \eqref{eqcocon}. Hence, $d$ must be even. Thus, $O_b(t)$ is even for infinitely many
primes $t$ in this case.

Case 2: $b$ is a perfect square. Write $b = a^2$ for some integer $a > 1$. Consider the sequence of
numbers
\[
a^{(2^{k+1})} + 1, \quad k = 1, 2, 3, \dots
\]
For each $k$, let $p_k$ be an odd prime divisor of $a^{2^{k+1}} + 1$ (such a prime exists since \(a >
1\), \(a^{(2^{k+1})} + 1 > 2\) and it is not a power of \(2\)). Then we have
\[
a^{(2^{k+1})} \equiv -1 \pmod{p_k},
\]
which implies
\[
b^{(2^k)} = a^{(2^{k+1})} \equiv -1 \pmod{p_k}.
\]
Consequently, $b^{(2^{k+1})} \equiv 1 \pmod{p_k}$ and $b^{(2^k)} \not\equiv 1 \pmod{p_k}$, so
$O_b(p_k)$ is even.

We claim that the primes $p_k$ are distinct. If $p_k = p_l=:p$ for some $k < l$, then we would have
simultaneously
\[
a^{(2^{k+1})} \equiv -1 \pmod{p}, \quad a^{(2^{l+1})} \equiv -1 \pmod{p}.
\]
Raising the first congruence to the power $2^{l-k}$ gives $a^{(2^{l+1})} \equiv (-1)^{(2^{l-k})} = 1
\pmod{p}$, contradicting the second congruence. Hence all $p_k$ are distinct. Thus there exist
infinitely many primes $p$ such that $O_b(p)$ is even.

Hence, we complete the proof.
\ez

For any real number $x$, we define $\lfloor x \rfloor= \max\{n \in \mathbb{Z} : n \leq x\}$ and
$\lceil x \rceil= \min\{n \in \mathbb{Z} : n \geq x\}$.
\begin{prop}\label{prop4.6}
If $O_b(t)>l_0(t)\#\cC$, where
\[l_0(t)=\left\lceil \frac{t(b-2)}{b(b-1)}\right\rceil,\] then $t$ is not a primitive non spectral
eigenvalue of $(\mu_{b, \D}, \La(b, \cC))$.
\end{prop}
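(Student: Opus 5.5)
The plan is to argue by contradiction. We count the positive integers in $T(b,t\cC)$ and compare that count with the number of distinct cycle points that Theorem \ref{theobasic} forces when $t$ is non spectral.

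\textbf{Step 1: reduce to a cycle of length $O_b(t)$.} Suppose $t$ is a primitively non spectral eigenvalue of $(\mu_{b,\D},\La(b,\cC))$. Then $\gcd(t,b)=1$ by (1) of Proposition \ref{prop2.12}, transported to eigenvalues by Theorem \ref{theobasic}. Also $t$ is incomplete w.r.t. $(b,\cC)$, so Theorem \ref{theobasic} (iv), which rests on Theorem \ref{theo2.7}, gives some $x\in\N$ with
\[\{b^kx\pmod t\}_{k=1}^{O_b(t)}\subset T(b,t\cC)\cap\N .\]
These $O_b(t)$ residues are pairwise distinct. Theorem \ref{theobasic} proves this distinctness using Proposition \ref{prop2.12}(2): the cycle points are coprime to $t$, so $b^ix\equiv b^jx\pmod t$ forces $b^i\equiv b^j\pmod t$. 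Hence
\[\#\bigl(T(b,t\cC)\cap\N\bigr)\ \ge\ O_b(t).\]

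\textbf{Step 2: bound the number of integers in $T(b,t\cC)$.} Since $\cC\subset\{0,1,\dots,b-2\}$, we have $T(b,t\cC)\subset[0,L_1]$ with $L_1=t\frac{b-2}{b-1}$. Self-similarity gives
\[T(b,t\cC)=\bigcup_{c\in\cC}\Bigl(\tfrac{tc}{b}+\tfrac1b T(b,t\cC)\Bigr)\subset\bigcup_{c\in\cC}\Bigl[\tfrac{tc}{b},\ \tfrac{tc}{b}+L\Bigr],\qquad L=\frac{t(b-2)}{b(b-1)} .\]
So it suffices to show that each interval $I_c=[\frac{tc}{b},\frac{tc}{b}+L]$ contains at most $l_0(t)=\lceil L\rceil$ positive integers. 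A closed interval of length $L$ contains at most $\lfloor L\rfloor+1$ integers. This number equals $\lceil L\rceil$ unless $L$ is an integer and both endpoints are integers.

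\textbf{Step 3: handle the endpoint case.} Because $\gcd(t,b)=1$ and $0\le c\le b-2$, the left endpoint $tc/b$ is an integer only when $c=0$. For $c=0$ the interval is $[0,L]$, and the integer $0$ is not in $\N$. So in every case $\#(I_c\cap\N)\le l_0(t)$. Summing over $c\in\cC$ gives
\[\#\bigl(T(b,t\cC)\cap\N\bigr)\le l_0(t)\,\#\cC<O_b(t).\]
This contradicts Step 1, so $t$ cannot be primitively non spectral.

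\textbf{Main obstacle.} The delicate point is Step 1: the $O_b(t)$ residues must really be distinct and positive. Distinctness is where primitivity enters, since without $\gcd(x,t)=1$ the orbit of $x$ could be shorter. If that route to distinctness fails, I would fall back on Proposition \ref{prop2.12}(3) to get the period $O_b(t)$ directly. That fallback would need the extra hypothesis $T(b,\cC-\cC)\cap\Z=\{0\}$, which I would try to avoid by relying on the distinctness already proved in Theorem \ref{theobasic}.
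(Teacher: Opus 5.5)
Your proof is correct, and it reaches the paper's bound $l_0(t)\#\cC$ through a different counting mechanism.

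\textbf{The paper's count.} The paper reads the cycle relation $bx_{k+1}=x_k+tc_k$ modulo $b$. This gives $x_k\equiv -tc_k\pmod b$, so the $O_b(t)$ distinct cycle points lie in at most $\#\cC$ residue classes. They all lie in $[1,t\frac{b-2}{b-1}]\subset[0,l_0(t)b)$, which is a union of $l_0(t)$ blocks of $b$ consecutive integers. Hence there are at most $l_0(t)\#\cC$ of them; this is \eqref{eq4.1}.

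\textbf{Your count.} You read the same relation as a real equation, $x_{k+1}=\frac{tc_k}{b}+\frac{x_k}{b}\in[\frac{tc_k}{b},\frac{tc_k}{b}+L]$. That is, you use the first-level self-similar cover of $T(b,t\cC)$ by $\#\cC$ intervals of length $L=\frac{t(b-2)}{b(b-1)}$ and count integers in each interval.

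\textbf{Comparison.} Your version bounds all of $T(b,t\cC)\cap\N$ directly, not just the points of one cycle. Your cover is the $k=1$ instance of the covering the paper uses in the proof of Proposition \ref{propadadr}, where it is then combined with the residue count. The residue count \eqref{eq4.1} is also what the paper reuses in Theorem \ref{theo4.11}.

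\textbf{Step 3.} This step is harmless but unnecessary. $L$ is never an integer, because $b\mid t(b-2)$ together with $\gcd(t,b)=1$ would force $b\mid 2$. So every closed interval of length $L$ already contains at most $\lfloor L\rfloor+1=\lceil L\rceil$ integers.

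\textbf{One repair in Step 1.} Deriving $\gcd(t,b)=1$ from Proposition \ref{prop2.12}(1) by transporting it through Theorem \ref{theobasic} is circular. Theorem \ref{theobasic} assumes coprimality. The identification of non-spectral with incomplete via Theorem \ref{theo3.2} likewise needs $(b,\D,t\cC)$ to be a Hadamard triple. The implication genuinely fails without this. Take $b=4$, $\D=\{0,1\}$, $\cC=\{0,2\}$. Then $t=2$ is not a spectral eigenvalue, since $0,4\in 2\La(4,\cC)$ and $\hat\mu_{4,\{0,1\}}(4)\neq 0$, yet its only proper factor $1$ is a spectral eigenvalue. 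You do not need the deduction anyway. The hypothesis is stated in terms of $O_b(t)$, which is defined only when $\gcd(t,b)=1$, so coprimality is part of the assumption. From there your Step 1 goes through as written.
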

\proof If $t$ is  primitive non spectral eigenvalue, then there is an integer cycle
$\{x_k\}_{k=1}^{O_b(t)}$ w.r.t. $(b, t\cC)$ and all $x_k$ are pairwise different by Theorem
\ref{theobasic}. By $bx_{k+1}=x_k+tc_k\equiv0\pmod b$ for $k\ge 1$, all $x_k\in -t\cC\pmod b$. Then
\[\#\{x_k: x_k\in [0, b), 1\le k\le O_b(t)\}\le\#\cC\]
and in general
\begin{eqnarray}\label{eq4.1}
  \#\{x_k: x_k\in [0, lb), 1\le k\le O_b(t)\}\le l\#\cC.
\end{eqnarray}
Note that $\gcd(b-1, b-2)=1$ and $(b-1)\nmid t$. There exists $m$ such that
\[(m-1)b\le \bigg\lfloor t\frac{b-2}{b-1}\bigg\rfloor<t\frac{b-2}{b-1}<m b.\]
Therefore
$$m=\left\lceil \frac{t(b-2)}{b(b-1)}\right\rceil:=l_0(t).$$
Since
$$x_k\in T(b, t\cC)\cap \N\subset \left[1, \left\lfloor t\frac{b-2}{b-1}\right\rfloor\right]\cap
\N\subset [0, mb)$$
for $1\le k\le O_b(t)$,
by \eqref{eq4.1} we have  $O_b(t)\le m\#\cC$, which contradicts the assumption. \ez
\begin{prop}\label{propadadr}
If $$O_b(t)>\min_{k\ge 0}(\#\cC)^{k+1}l_0(b^{-k}t),$$ where $l_0(t)$ is given in Proposition
\ref{prop4.6},
 then $t$ is not a primitively non spectral eigenvalue of $(\mu_{b, \D}, \La(b, \cC))$.
\end{prop}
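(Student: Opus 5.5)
The plan is to run the counting argument of Proposition \ref{prop4.6} again, but with residues modulo $b^{k+1}$ in place of residues modulo $b$, for an arbitrary $k\ge 0$. Suppose, for contradiction, that $t$ is a primitively non spectral eigenvalue. By Theorem \ref{theobasic} (with Theorem \ref{theo3.2} and Theorem \ref{theo2.7}) there is a non-trivial integer cycle $\{x_j\}$ w.r.t. $(b,t\cC)$ whose points $x_1,\ldots,x_{O_b(t)}$ are pairwise different. Each of them lies in
\[
T(b,t\cC)\cap\N\subset \Bigl[1,\,t\tfrac{b-2}{b-1}\Bigr].
\]
We extend the sequence periodically, so that $bx_{j+1}=x_j+tc_j$ with $c_j\in\cC$ holds for every $j\ge1$.

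First key step: iterate the recursion $k+1$ times. This gives
\[
x_j=b^{k+1}x_{j+k+1}-t\bigl(c_j+bc_{j+1}+\cdots+b^{k}c_{j+k}\bigr).
\]
Since $x_{j+k+1}\in\Z$, it follows that $x_j \bmod b^{k+1}$ lies in the set $-t\,\La_{k+1}(b,\cC) \bmod b^{k+1}$. That set has at most $(\#\cC)^{k+1}$ elements. Consequently every half-open interval $[lb^{k+1},(l+1)b^{k+1})$ contains at most $(\#\cC)^{k+1}$ of the pairwise distinct points $x_1,\ldots,x_{O_b(t)}$. This is the analogue of \eqref{eq4.1}, namely
\[
\#\{x_j\in[0,lb^{k+1}):1\le j\le O_b(t)\}\le l(\#\cC)^{k+1}.
\]

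Second key step: count how many such intervals are needed to cover $[0,t\frac{b-2}{b-1}]$. This mirrors the choice of $m$ in Proposition \ref{prop4.6}. The number needed is
\[
m_k=\Bigl\lceil \tfrac{t(b-2)}{b^{k+1}(b-1)}\Bigr\rceil .
\]
Reading the definition of $l_0$ at the real argument $b^{-k}t$, this is exactly $l_0(b^{-k}t)$. (A small point to check is the boundary case. If $t\frac{b-2}{b-1}$ is a multiple of $b^{k+1}$, the interval must still be covered. This is handled either by noting that $x_j\le\lfloor t\frac{b-2}{b-1}\rfloor$ together with $(b-1)\nmid t$, as in Proposition \ref{prop4.6}, or simply by observing that $x_j<t\frac{b-2}{b-1}$ whenever the cycle is not the constant maximal one, which is excluded since $b-1\notin\cC$.)

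Combining the two steps gives $O_b(t)\le (\#\cC)^{k+1}l_0(b^{-k}t)$ for every $k\ge0$, hence $O_b(t)\le\min_{k\ge0}(\#\cC)^{k+1}l_0(b^{-k}t)$, contradicting the hypothesis. The only step needing care is the first one, the residue description modulo $b^{k+1}$. It relies on the periodic extension of the cycle, so that $k+1$ further steps of the recursion are always available. Everything else is the bookkeeping already carried out for Proposition \ref{prop4.6}, which is the case $k=0$.
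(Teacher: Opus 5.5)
Your argument is correct and reaches the paper's bound $(\#\cC)^{k+1}l_0(b^{-k}t)$ by a different decomposition.

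\textbf{The paper's route.} It keeps the modulus $b$ of Proposition \ref{prop4.6} and refines the geometry. It writes $T(b,t\cC)=\bigcup_{I\in\cC^k}\phi_I(T(b,t\cC))$, so the cycle points lie in $(\#\cC)^k$ cylinder intervals of length $b^{-k}t\frac{b-2}{b-1}$. Inside each cylinder it reuses the mod-$b$ count: at most $\#\cC$ cycle points per window of length $b$, hence at most $\#\cC\, l_0(b^{-k}t)$ per cylinder.

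\textbf{Your route.} You keep the single interval $[0,t\frac{b-2}{b-1}]$ and refine the arithmetic instead, via $x_j\equiv -t(c_j+bc_{j+1}+\cdots+b^kc_{j+k})\pmod{b^{k+1}}$.

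\textbf{How they relate.} With $\phi_c(x)=\frac{x+tc}{b}$ one has $x_j=\phi_{c_{j-1}}\circ\cdots\circ\phi_{c_{j-k}}(x_{j-k})$. So the paper effectively sorts cycle points by the $k$ preceding digits $c_{j-k},\ldots,c_{j-1}$ plus the next digit $c_j$. You sort them by the $k+1$ following digits $c_j,\ldots,c_{j+k}$. Both give $(\#\cC)^{k+1}$ classes and $l_0(b^{-k}t)$ windows. Your version is purely arithmetic, treats $k=0$ uniformly with $k\ge 1$, and never has to locate cylinder endpoints. The paper's version ties the estimate to the self-similar structure of $T(b,t\cC)$, which is the natural place to look for a sharper count.

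\textbf{A correction to your boundary-case parenthetical.} Your second justification is wrong. The constant cycle $x_j\equiv t\frac{b-2}{b-1}$ with all $c_j=b-2$ is not ruled out by $b-1\notin\cC$. Take $b=4$, $\D=\{0,1\}$, $\cC=\{0,2\}$ and $t=3$, which is a primitively non spectral eigenvalue. There $x=2=t\frac{b-2}{b-1}$ is a fixed point. The same example shows that the condition $(b-1)\nmid t$ you borrow from Proposition \ref{prop4.6} can fail when $t=b-1$.

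Fortunately the boundary case never occurs. If $t\frac{b-2}{b-1}\in\Z$, it equals $\frac{t}{b-1}(b-2)$ with $\gcd(\frac{t}{b-1},b)=1$. Since $b>2$, $b\nmid(b-2)$, so this number is never a multiple of $b^{k+1}$. Hence $l_0(b^{-k}t)\,b^{k+1}>t\frac{b-2}{b-1}$ always, and your covering is valid as stated.
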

\proof Denote $T=T(b, t\cC)$. Then
\begin{eqnarray*}
  T&=&\bigcup_{c\in\cC}\phi_c(T)=\bigcup_{I\in\cC^k}\phi_I(T)\subset\bigcup_{I\in\cC^k}\phi_I\left([0,
  t\frac{b-2}{b-1}]\right)\\
  &=&\bigcup_{I\in\cC^k}\left[\phi_I(0), \phi_I(0)+b^{-k}t\frac{b-2}{b-1}\right],
\end{eqnarray*}
where
\[\phi_I(x)=\frac 1{b^k}x+\phi_I(0), \quad \phi_I(0)=c_1b^{-1}+c_2b^{-2}+\cdots+c_kb^{-k}\]
if $I=c_1c_2\cdots c_k\in\cC^k$. It is easy to see that both $\phi_I(0)$ and
$\phi_I(0)+b^{-k}t\frac{b-2}{b-1}$ are not integers. Then by the same idea with the above
\begin{eqnarray*}
O_b(t)=\#\{x_k: x_k\in T\cap \N\}\le (\#\cC)^{k+1}l_0(b^{-k}t)
\end{eqnarray*}
for $k\ge 1$. Hence the assertion follows by the assumption and Proposition \ref{prop4.6}. \ez

The following theorem is an immediate consequence of Proposition \ref{propadadr}.
\begin{thm}\label{theo4.8}
Let $t\in\N$ with $\gcd(t, b)=1$. If any proper $t'\mid t$ is a spectral eigenvalue and
\[O_b(t)>\min_{k\ge 1}(\#\cC)^{k+1}l_0(b^{-k}t),\]
then $t$ is a spectral eigenvalue.
\end{thm}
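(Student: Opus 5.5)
Argue by contradiction. Suppose $t$ is not a spectral eigenvalue. Since $\gcd(t,b)=1$ and every proper factor $t'\mid t$ is a spectral eigenvalue, $t$ is by definition a primitively non spectral integer eigenvalue of $(\mu_{b,\D},\La(b,\cC))$. Proposition \ref{propadadr} says such a $t$ must satisfy
\[O_b(t)\le \min_{k\ge 0}(\#\cC)^{k+1}l_0(b^{-k}t).\]
This contradicts the hypothesis once we observe that
\[\min_{k\ge 0}(\#\cC)^{k+1}l_0(b^{-k}t)\le \min_{k\ge 1}(\#\cC)^{k+1}l_0(b^{-k}t),\]
since the left side is a minimum over a larger index set. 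So the assumed inequality $O_b(t)>\min_{k\ge1}(\cdots)$ does not by itself give $O_b(t)>\min_{k\ge0}(\cdots)$.

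To close this gap I will use the proof of Proposition \ref{propadadr} rather than its statement. That proof actually shows, for each $k\ge1$ separately, that
\[O_b(t)=\#\{x_j\}\le (\#\cC)^{k+1}l_0(b^{-k}t).\]
Here $\{x_j\}_{j=1}^{O_b(t)}$ are the pairwise distinct cycle points supplied by Theorem \ref{theobasic}(iv). They lie in $T(b,t\cC)\cap\N$, which is covered by the $(\#\cC)^k$ intervals $\phi_I([0,t\frac{b-2}{b-1}])$ with non-integer endpoints. In each such interval we count the points of each residue class $-t\cC \pmod b$ as in \eqref{eq4.1}. Given a $k\ge1$ at which the minimum is attained, the hypothesis $O_b(t)>(\#\cC)^{k+1}l_0(b^{-k}t)$ therefore contradicts this per-$k$ bound. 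Hence $t$ is not primitively non spectral. Since its proper factors are spectral, it is not non spectral at all; that is, $t$ is a spectral eigenvalue, which by Theorem \ref{theo3.2} is equivalent to $t$ being complete.

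The main obstacle is to make sure the per-$k$ inequality holds exactly as stated for every $k\ge1$. The key point is the counting in each sub-interval $\phi_I([0,b^{-k}t\frac{b-2}{b-1}])$, which has length $b^{-k}t\frac{b-2}{b-1}$. Cutting it into consecutive blocks of length $b$ gives at most $l_0(b^{-k}t)+1$ blocks when the interval is not aligned with $b\Z$. Each block contains at most $\#\cC$ admissible residues. The extra factor $\#\cC$ in $(\#\cC)^{k+1}$ is meant to absorb this misalignment. I will check this against the paper's sketch and, if needed, replace $l_0$ by $l_0+1$ inside the argument. The conclusion is unaffected provided the paper's stated bound is the one established in Proposition \ref{propadadr}, which we may assume.
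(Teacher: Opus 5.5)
Your conclusion is right, and your route is the paper's: Theorem~\ref{theo4.8} is stated there as an immediate consequence of Proposition~\ref{propadadr}. However, the ``gap'' you announce after your first paragraph does not exist, because you have the implication backwards. Since $\min_{k\ge0}(\#\cC)^{k+1}l_0(b^{-k}t)\le\min_{k\ge1}(\#\cC)^{k+1}l_0(b^{-k}t)$, the hypothesis $O_b(t)>\min_{k\ge1}(\cdots)$ \emph{does} give $O_b(t)>\min_{k\ge0}(\cdots)$. So Proposition~\ref{propadadr} applies verbatim. Together with $\gcd(t,b)=1$ and the spectrality of all proper divisors, this forces $t$ to be spectral. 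That is exactly the deduction you wrote and then retracted, and nothing more is needed.

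The detour through the per-$k$ bound is therefore superfluous. Your closing hedge is also wrong in two respects.

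First, replacing $l_0$ by $l_0+1$ would \emph{not} leave the conclusion unaffected. A bound $O_b(t)\le(\#\cC)^{k+1}\bigl(l_0(b^{-k}t)+1\bigr)$ does not contradict $O_b(t)>(\#\cC)^{k+1}l_0(b^{-k}t)$.

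Second, the last factor $\#\cC$ is not there to absorb misalignment. The factor $(\#\cC)^k$ counts the intervals $\phi_I([0,t\frac{b-2}{b-1}])$, and the remaining $\#\cC$ counts the admissible residue classes $-t\cC$ modulo $b$. Misalignment disappears if you count per residue class instead of per block of length $b$:
\begin{itemize}
\item a closed interval of length $L$ contains at most $\lfloor L/b\rfloor+1$ integers of a fixed class modulo $b$;
\item with $L=b^{-k}t\frac{b-2}{b-1}$ one has $L/b=\frac{t(b-2)}{b^{k+1}(b-1)}\notin\Z$, because $\gcd(t,b)=1$ and $b\nmid b-2$ for $b>2$;
\item hence $\lfloor L/b\rfloor+1=\lceil L/b\rceil=l_0(b^{-k}t)$, which is exactly the bound the paper uses.
\end{itemize}
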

\begin{thm}
If Artin's conjecture holds, then Conjecture \ref{conj4.2} holds for each non-perfect suqare $b$.
\end{thm}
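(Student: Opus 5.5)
Assume Artin's conjecture and that $b$ is not a perfect square. Then there are infinitely many primes $p$ with $O_b(p)=p-1$; discarding the finitely many dividing $b$ or $b-1$ leaves infinitely many primes $p$ with $\gcd(p,b)=1$ and maximal order. The plan is to show that every such $p$, once large enough, is a spectral eigenvalue. Since $\gcd(p,b)=1$, each of them is then a primitively spectral integer eigenvalue, which proves Conjecture \ref{conj4.2}.

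For a prime $p$, the only proper factor is $1$. The number $1$ is a spectral eigenvalue because $(\mu_{b,\D},\La(b,\cC))$ is a canonical spectral pair; equivalently, $1$ is complete by Proposition \ref{theo3.6}, since $b-1\notin\cC$. So the hypothesis of Theorem \ref{theo4.8} on proper factors holds automatically. It remains to check the order condition of Theorem \ref{theo4.8} (or of Proposition \ref{prop4.6}) for large $p$.

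First try Proposition \ref{prop4.6}: we need
\[
p-1>\#\cC\left\lceil \frac{p(b-2)}{b(b-1)}\right\rceil.
\]
The right-hand side is at most $\#\cC\bigl(\frac{p(b-2)}{b(b-1)}+1\bigr)$. So it suffices that
\[
p\Bigl(1-\frac{\#\cC(b-2)}{b(b-1)}\Bigr)>1+\#\cC.
\]
The coefficient of $p$ is positive whenever $\#\cC(b-2)<b(b-1)$. This holds because $\#\cC=\#\D<b$ gives $\#\cC(b-2)<b(b-2)<b(b-1)$. Hence the inequality holds for all primes $p$ larger than an explicit constant depending only on $b$ and $\#\cC$. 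Such $p$ are not primitively non-spectral. Since every proper factor of $p$ is spectral, $p$ cannot be non-spectral at all, so it is a spectral eigenvalue.

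The main obstacle is that Proposition \ref{prop4.6} has a gap as stated. Its proof assumes $t$ is primitively non-spectral in order to get an integer cycle of length exactly $O_b(t)$ with pairwise distinct points, via Theorem \ref{theobasic} and Proposition \ref{prop2.12}. For a prime $p$ whose proper factors are all spectral, "non-spectral" and "primitively non-spectral" coincide, so the argument applies. I would double-check one more point. Proposition \ref{prop2.12}(3) needs the hypothesis $T(b,\cC-\cC)\cap\Z=\{0\}$, and this hypothesis may fail. If it does, I would instead use the pairwise distinctness of the residues $b^kx \pmod p$ from Theorem \ref{theobasic}(iv). That fact needs only $\gcd(x,p)=1$, which is automatic because $0<x<p$ and $p$ is prime. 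These $O_b(p)=p-1$ distinct points then all lie in $T(b,p\cC)\cap\N$, and the counting bound \eqref{eq4.1} gives the same contradiction.
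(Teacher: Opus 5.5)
Your proof is correct and follows the paper's route. Artin's conjecture supplies infinitely many primes $p$ with $O_b(p)=p-1$, and the counting bound of Proposition~\ref{prop4.6} shows these are not primitively non-spectral; since their only proper divisor $1$ is spectral (the point of Theorem~\ref{theo4.8}), they are spectral eigenvalues. Your explicit handling of the ceiling in $l_0(p)$ (which the paper compresses into $O_b(p)>\frac{\#\cC}{b}p$) and your remark that distinctness of the orbit $\{b^kx \bmod p\}$ needs only $0<x<p$ with $p$ prime are added care, not a change of method.
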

\proof By Artin's conjecture, there are infinitely many primes $p$ such that $O_b(p)=p-1$. For such
$p$ one has $O_b(p)>\frac {\#\cC}{b}p$. Hence the result follows by Proposition \ref{prop4.6} and
Theorem \ref{theo4.8}.\ez
\begin{thm}
If Artin's conjecture holds and $2^k\#\cC\le b=\al^{2^k}$ for some non-perfect square $\al$,  then
Conjecture \ref{conj4.2} holds.
\end{thm}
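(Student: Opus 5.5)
Since $\alpha$ is not a perfect square, Artin's conjecture gives infinitely many primes $p$ with $O_\alpha(p)=p-1$, and we may discard the finitely many $p$ dividing $b$ or small enough to spoil the estimates below. For such $p$ we have $b=\alpha^{2^k}$ and
\[
O_b(p)=\frac{p-1}{\gcd(p-1,2^k)}\ \ge\ \frac{p-1}{2^k}.
\]
The plan is to apply Proposition \ref{prop4.6} (together with Theorem \ref{theo4.8}, exactly as in the non-square case), with $t=p$ prime. Then $p$ has no proper factors other than $1$, and $1$ is a spectral eigenvalue because $\La(b,\cC)$ is a spectrum. So $p$ is spectral as soon as it is not primitively non spectral.

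It therefore suffices to verify, for all large Artin primes $p$ of $\alpha$,
\[
\frac{p-1}{2^k} > \#\cC\left\lceil\frac{p(b-2)}{b(b-1)}\right\rceil .
\]
The right-hand side is at most
\[
\#\cC\left(\frac{p(b-2)}{b(b-1)}+1\right),
\]
so it is enough to have
\[
p\left(\frac1{2^k}-\frac{\#\cC(b-2)}{b(b-1)}\right) > \#\cC+\frac1{2^k}.
\]
The hypothesis $2^k\#\cC\le b$ gives
\[
\frac{\#\cC(b-2)}{b(b-1)}\le \frac{b-2}{2^k(b-1)}<\frac1{2^k},
\]
so the bracket is a positive constant, at least $\frac{1}{2^k(b-1)}$. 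The inequality thus holds for all
\[
p> 2^k(b-1)\left(\#\cC+2^{-k}\right).
\]
Since there are infinitely many Artin primes, infinitely many primes $p$ satisfy $O_b(p)>l_0(p)\#\cC$. Each is then a spectral eigenvalue with $\gcd(p,b)=1$, i.e.\ a primitively spectral eigenvalue, and distinct primes give distinct eigenvalues, which proves Conjecture \ref{conj4.2}.

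The main obstacle is the order computation $O_b(p)\ge (p-1)/2^k$. I would justify it by the standard fact that in the cyclic group $(\Z/p\Z)^\ast$, if $\alpha$ is a generator then $\alpha^{m}$ has order $(p-1)/\gcd(p-1,m)$. The other delicate point is the strictness needed against the ceiling, which is handled by taking $p$ large. If one also wants to avoid small $p$ where the ceiling or the $-1$ matters, discarding finitely many primes suffices since only infinitude is required.
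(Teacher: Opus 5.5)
Your proof is correct and follows the paper's argument. Like the paper, you apply Artin's conjecture to $\al$, use $O_b(p)=\frac{p-1}{\gcd(p-1,2^k)}\ge 2^{-k}(p-1)$ together with $2^k\#\cC\le b$, and conclude via Proposition \ref{prop4.6} and the reasoning of Theorem \ref{theo4.8}; your explicit threshold $p>(b-1)(2^k\#\cC+1)$ also handles the ceiling in $l_0(p)$ more carefully than the paper's ``for $t>b$'', which only gives $O_b(t)\ge \frac{\#\cC(b-1)}{b^2}t$ and in fact needs $t$ considerably larger before it beats $\#\cC\,l_0(t)$.
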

\proof
Let $b=\al^{2^k}$ and $\al$ is not a perfect square and let $t$ be a prime with $O_\al(t)=t-1$. Then
\[O_b(t)=O_{\al^{2^k}}(t)=\frac{O_\al(t)}{\gcd(O_\al(t), 2^k)}=\frac{t-1}{\gcd(t-1, 2^k)}.\]
Denote $t=1+2^ms$ where $s$ is odd. Then
 \begin{eqnarray*}
   O_b(t)=\frac{t-1}{2^{\min\{m,k\}}}\ge 2^{-k}(t-1).
 \end{eqnarray*}
 By a direct calculation one has for $t>b$
 \begin{eqnarray*}
   O_b(t)\geq 2^{-k}(t-1)\ge \frac{\#\cC(b-1)}{b^2}t.
 \end{eqnarray*}
 The assertion follows by Proposition \ref{prop4.6} and Theorem \ref{theo4.8}.
\ez

Recall
\[\ell_b(t)=\max\{k\geq1: t^k\mid (b^{O_b(t)}-1)\},\]
and let $p$ be a prime, we have
\[O_b(p^n)=p^{\max\{0, n-\ell_b(p)\}}O_b(p).\]
Then for $m>n$ we have
\begin{eqnarray}  \label{eq4.2}
\frac{O_b(p^m)}{O_b(p^n)}=\frac{p^{\min\{0, m-\ell_b(p)\}}}{p^{\min\{0, n-\ell_b(p)\}}}=\left\{
                                                                                            \begin{array}{ll}
                                                                                              p^{m-n},
                                                                                              &
                                                                                              \hbox{if
                                                                                              $n\ge\ell_b(p)$;}
                                                                                              \\
                                                                                             p^{m-\ell_b(p)},
                                                                                             &
                                                                                             \hbox{if
                                                                                             $m>\ell_b(p)$
                                                                                             and
                                                                                             $n\le\ell_b(p)$;}
                                                                                             \\
                                                                                              1, &
                                                                                              \hbox{if
                                                                                              $m\le\ell_b(p)$
                                                                                              .}
                                                                                            \end{array}
                                                                                          \right..
\end{eqnarray}
\begin{thm}\label{theo4.11}
Let $(b, \D, \cC)$ be a canonical Hadamard triple. For $n,m\in \N$ with $n<m$. Let $p>2$ be a prime
with
\begin{eqnarray}\label{eq4.3}
  \frac{O_b(p^m)}{O_b(p^n)}=M\ge \frac{\#\cC}b\left(\frac{b-2}{b-1}p^{m-n}+b-\#\cC\right).
\end{eqnarray} Then $p^m$ is not a primitive non spectral eigenvalue of  $(\mu_{b, \D}, \La(b,
\cC))$.
\end{thm}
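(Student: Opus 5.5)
Argue by contradiction. Suppose $p^m$ is a primitive non spectral eigenvalue, i.e. primitively incomplete w.r.t. $(b,\cC)$. Then by Theorem \ref{theobasic} and Proposition \ref{prop2.12} there is a nontrivial integer cycle $\{x_k\}_{k=1}^{O_b(p^m)}$ w.r.t. $(b,p^m\cC)$ of exact length $O_b(p^m)$. Its points are pairwise distinct, lie in $T(b,p^m\cC)\cap\N\subset[1,p^m\frac{b-2}{b-1}]$, and satisfy $\gcd(x_k,p)=1$. Since $p^n$ is a proper factor of $p^m$, it is complete, so $T(b,p^n\cC)\cap\N=\emptyset$.

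The plan is to reduce the cycle modulo $p^n$ (equivalently, divide into residue classes modulo $p^{m-n}$) and count points, in the spirit of Proposition \ref{prop4.6}.

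\textbf{Step 1 (fibers of the reduction).} The map $x_k\mapsto x_k \bmod p^n$ sends the cycle onto an orbit of multiplication by $b$ (or $b^{-1}$) in $(\Z/p^n\Z)^*$. That orbit has size $O_b(p^n)$. Each fiber therefore has exactly $M=O_b(p^m)/O_b(p^n)$ points, namely $\{x_k: k\equiv j \pmod{O_b(p^n)}\}$. These $M$ points are congruent modulo $p^n$, so their pairwise differences are nonzero multiples of $p^n$.

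\textbf{Step 2 (an extra digit constraint on each fiber).} Points of one fiber are also all congruent modulo $b$ to $-p^m c$ for digits $c$ in $\cC$. More usefully, I would use the cycle relation $b^{O_b(p^n)}x_{k+O_b(p^n)}=x_k+p^m(c_k+\cdots)$. Together with the completeness of $p^n$, this should show that inside a window of length $b p^n$ one fiber can contain at most $\#\cC$ points. Otherwise two points $x,x'$ with $x-x'=p^n y$ would give an integer $y\in T(b,p^{n}\cC)-T(b,p^n\cC)$ of a forbidden form. This mimics the proof of Proposition \ref{prop2.12}(3), which used $T(b,\cC-\cC)\cap\Z=\{0\}$. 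Here I would instead rely on $b-1\notin\cC$ with $\cC\subset\{0,\dots,b-2\}$, and possibly on the canonical structure via Proposition \ref{theo3.6}.

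\textbf{Step 3 (counting).} The fiber lies in an interval of length $p^m\frac{b-2}{b-1}$, i.e. $p^{m-n}\frac{b-2}{b-1}$ steps of size $p^n$. Grouping these steps into blocks of $b$ consecutive ones, and refining the count with the residue information modulo $b$, should bound the fiber size by
\[
\frac{\#\cC}{b}\Big(\frac{b-2}{b-1}p^{m-n}+b-\#\cC\Big)
\]
minus something strictly positive. The term $b-\#\cC$ accounts for the partial block at the end. This strict bound contradicts \eqref{eq4.3}, so $p^m$ is not a primitive non spectral eigenvalue.

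\textbf{Main obstacle.} The hard part is Step 2: proving that within each block of $b$ consecutive admissible positions (step $p^n$) at most $\#\cC$ fiber points occur, and obtaining the sharp boundary term needed for the inequality to be strict. My first attempt would work with residues modulo $b$. Each $x_k\equiv -p^m c_{k-1}\pmod b$, and $p^n$ is invertible mod $b$, so the positions $x_0+jp^n$ run through all residues mod $b$ in each block of $b$. Only $\#\cC$ of those residues are allowed, which gives exactly the factor $\#\cC/b$ plus the edge correction.
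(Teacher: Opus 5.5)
Your Steps 1 and 2 are the paper's argument. The cycle splits modulo $p^n$ into $O_b(p^n)$ fibers of $M$ points $y^*+\alpha_kp^n$, where $1\le y^*<p^n$ (because $\gcd(x_k,p)=1$) and the $\alpha_k\ge0$ are distinct. The relation $x_k\equiv -p^mc_k\pmod b$, together with $gp^n\equiv1\pmod b$, confines the $\alpha_k$ to the $\#\cC$ classes $g(-p^m\cC-y^*)\bmod b$. The detour through completeness of $p^n$ in Step 2 is unnecessary. As written it is also off: two fiber points differ by an element of $p^mT(b,\cC-\cC)$, so $y\in p^{m-n}T(b,\cC-\cC)$, not $T(b,p^n\cC)-T(b,p^n\cC)$. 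The residue argument under ``Main obstacle'' is all Step 2 needs.

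The gap is Step 3, which carries all the quantitative content; you assert the strict bound but do not derive it, and it does not come for free. Write $M=s\#\cC+r$ with $0\le r<\#\cC$. The sharp consequence of the residue constraint is:
\begin{itemize}
\item[] if $r\ge1$, then $\alpha_{\max}\ge sb+r-1$;
\item[] if $r=0$, then $\alpha_{\max}\ge (s-1)b+\#\cC-1$.
\end{itemize}
Equality holds when the admissible classes are $0,1,\dots,\#\cC-1$ and the $\alpha_k$ are the $M$ smallest admissible integers.

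\emph{Case $r\ge1$.} Here $\alpha_{\max}\ge \frac{b}{\#\cC}M-b+\#\cC+\bigl(\frac{b}{\#\cC}-2\bigr)$. This is at least $\frac{b-2}{b-1}p^{m-n}$ by \eqref{eq4.3} and $b\ge 2\#\cC$. The latter holds because if $\#\cC>b/2$, then $\cC-\cC$ meets every nonzero class mod $b$. The column relations of the Hadamard matrix then kill every nontrivial Fourier coefficient of $\D\bmod b$, forcing $\#\D=b$. Now $y^*\ge1$ gives $y_{\max}>\frac{b-2}{b-1}p^m$, contradicting $T(b,p^m\cC)\subset[0,\frac{b-2}{b-1}p^m]$. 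This is how the paper concludes; it takes $y^*\ge O_b(p^n)$, but only $y^*>0$ is used.

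\emph{Case $r=0$, i.e.\ $\#\cC\mid M$.} Here you only get $y_{\max}\ge \frac{b-2}{b-1}p^m-(p^n-y^*)$, which is no contradiction. So your ``minus something strictly positive'' fails in this sub-case. The paper's bound $\alpha_{\max}\ge (s-1)b+\#\cC$ is also one too large there.

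To close the argument you need one of the following:
\begin{itemize}
\item[] one more unit on the right of \eqref{eq4.3}, i.e.\ $b-\#\cC+1$ in place of $b-\#\cC$;
\item[] an argument excluding $\#\cC\mid M$. Note that $M$ is a power of $p$ and $p\nmid b$.
\end{itemize}
The extra unit costs nothing in Theorem \ref{themmlk}, where $M=p^{m-\ell_b(p)}$ exceeds the bound by a wide margin.
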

\proof If $p^m$ is a primitive non spectral integer eigenvalue, then by Theorem \ref{theobasic} there
exists $x\in T(b, p^m\cC)\cap\N$ such that
\begin{eqnarray*}
  \left\{b^kx\right\}_{k=1}^{O_b(p^m)}\pmod {p^m}\subset T(b, p^m\cC)\cap\N.
\end{eqnarray*}
Consider
\begin{eqnarray*}
  \left\{b^kx\right\}_{k=1}^{O_b(p^n)}\pmod {p^n}.
\end{eqnarray*}
Note that $0\not\in \left\{b^kx\right\}_{k=1}^{O_b(p^n)}$ and all of them are pairwise different.
Then there exists $j$ such that $b^jx\pmod {p^n}\ge O_b(p^n)$. Denote $$y^*=b^jx\pmod {p^n}\ge
O_b(p^n).$$
Set
$$y_k=b^{j+kO_b(p^n)}x\pmod {p^m}, \qquad \text{for $0\le k\le M-1$}.$$
Then
$$y_k=b^{j+kO_b(p^n)}x\pmod {p^n}=b^{j}x\pmod {p^n}=y^*+\al_kp^n,$$
where all $\al_k\ge 0$ are pairwise different for $0\le k\le M-1$. Take $g\in\N$ with $gp^n=1\pmod
b$. By the earlier part of the proof of Proposition \ref{prop4.6} we have $y_k\in-t\cC\pmod b$. Then
 \begin{eqnarray*}
   \al_k= gy_k-gy^*\pmod b\in -gy^*-gt\cC\pmod b.
 \end{eqnarray*}
Hence by \eqref{eq4.1}
\begin{eqnarray*}
\#\{\al_k: x_k\in [0, lb), 0\le k\le M-1\}\le l\#\cC, \qquad \text{for $l\ge 1$}.
\end{eqnarray*}
Write $M=s\#\cC+r$ for $0\le r<\#\cC$. When $r=0$, then the above holds for $s=M/\#\cC$. Denote
$\al_{max}=\max_{0\le i\le O_b(t)}\al_i$. Then
\[\al_{max}\ge (s-1)b+\#\cC=(\frac M{\#\cC}-1)b+\#\cC=\frac b{\#\cC}M-b+\#\cC.\]
When $1\le r<\#\cC$, then
\[\al_{max}\ge sb+r=\frac{M-r}{\#\cC}b+r=\frac b{\#\cC}M-\frac b{\#\cC}r+r\ge \frac b{\#\cC}M-(\frac
b{\#\cC}-1)(\#\cC-1).\]
Consequently $\al_{max}\ge \frac b{\#\cC}M-b+\#\cC.$ Then
\begin{eqnarray*}
  y_{max}&:=&y^*+\al_{max}p^n\ge O_b(p^n)+p^n(\frac b{\#\cC}M-b+\#\cC)\\
  &=&p^{\min\{0, n-\ell_b(p)\}}O_b(p)+p^n(\frac b{\#\cC}M-b+\#\cC)\\
  &>&p^n(\frac b{\#\cC}M-b+\#\cC)\ge \frac{b-2}{b-1}p^{m},
\end{eqnarray*}
which contradicts the the assumption.\ez

\begin{lem}\label{lem4.12}
If $p^{m}\ge (b-1)(b-\#\cC)$, then
\[\frac{\#\cC}b\left(\frac{b-2}{b-1}p^{m}+b-\#\cC\right)\le \frac{\#\cC}bp^{m}.\]
\end{lem}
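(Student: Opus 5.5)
Multiplying both sides of the claimed inequality by the positive factor $b/\#\cC$, we see that it is equivalent to
\[
\frac{b-2}{b-1}p^{m}+b-\#\cC\le p^{m}.
\]
So the plan is simply to reduce the statement to this elementary inequality and check it directly from the hypothesis $p^m\ge (b-1)(b-\#\cC)$.

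Subtracting $\frac{b-2}{b-1}p^m$ from both sides, the inequality becomes
\[
b-\#\cC\le p^m\Bigl(1-\frac{b-2}{b-1}\Bigr)=\frac{p^m}{b-1}.
\]
Since $b>2$, we have $b-1>0$. Multiplying by $b-1$ therefore gives the equivalent form $(b-1)(b-\#\cC)\le p^m$, which is exactly the hypothesis.

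Every step is a reversible multiplication by a positive quantity ($b/\#\cC$ and $b-1$) or an addition of the same term to both sides. Reading the chain backwards thus yields the stated inequality.

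There is no real obstacle here. The only point to record is the positivity of $\#\cC$ and of $b-1$, which holds because $0\in\cC$ and $b>2$, so that no inequality direction is reversed. The lemma is evidently meant to be combined with Theorem \ref{theo4.11}, applied there with $p^{m-n}$ in place of $p^m$.
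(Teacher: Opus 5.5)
Your argument is correct and is exactly the routine verification the paper has in mind (the paper only says the proof is straightforward): cancelling the positive factor $\#\cC/b$ and moving $\frac{b-2}{b-1}p^m$ to the right reduces the claim to $b-\#\cC\le p^m/(b-1)$, which is the hypothesis since $b-1>0$. Your closing remark is also accurate. In Theorem \ref{themmlk} the lemma is applied with $p^{m-\ell_b(p)}$ in place of $p^m$, which is legitimate because $m>\ell_b(p)$ and $p\ge (b-1)(b-\#\cC)$.
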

\proof
The proof is straightforward.
\ez
\begin{thm}\label{themmlk}
Let $(b, \D, \cC)$ be a canonical Hadamard triple. Let $p$ be a prime larger than $2$ satisfying
$p\ge (b-1)(b-\#\cC).$
If $p^{\ell_b(p)}$ is a spectral integer eigenvalue of $(\mu_{b, \D}, \La(b, \cC))$, then all $p^n$
are complete.
\end{thm}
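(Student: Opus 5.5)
We argue by strong induction on $n$, showing that every $p^n$ is complete w.r.t. $(b,\cC)$, which by Theorem \ref{theo3.2} (or Theorem \ref{theobasic}) is the same as being a spectral eigenvalue. Note that $\gcd(p,b)=1$ because $p>(b-1)(b-\#\cC)\ge b-1$ (if $\#\cC<b$; $p\ge b$ suffices anyway when $p$ is a prime not dividing $b$). If some $p^n$ were incomplete, take the least such $n$; then all proper divisors $p^{n'}$, $n'<n$, are complete. Thus $p^n$ is a primitively incomplete integer, and it is a primitive non spectral eigenvalue. So it suffices to show that no $p^n$ can be a primitive non spectral eigenvalue.

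We split into two cases according to $\ell:=\ell_b(p)$. For $n\le \ell$ the key observation is that $O_b(p^n)=O_b(p)$ for all $1\le n\le\ell$. We use the cycle description of Theorem \ref{theobasic}(iv) together with the reduction trick from the proof of Theorem \ref{theo4.11}: reducing a cycle for $p^{\ell}$ modulo $p^n$ gives a cycle whose period divides $O_b(p^n)=O_b(p^\ell)$. Completeness should descend from $p^\ell$ to its divisors. Concretely, if $p^n$ with $n\le\ell$ is incomplete, then by the Corollary following Remark~\ref{nonne} ($\cC\subset\Z_{\ge0}$, so incompleteness passes to multiples) $p^\ell=p^{\ell-n}p^n$ is incomplete. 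This contradicts the hypothesis that $p^\ell$ is a spectral eigenvalue. Hence all $p^n$ with $n\le \ell$ are complete, and this case is immediate.

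For $m>\ell$ we invoke Theorem \ref{theo4.11} with $n=\ell$. By \eqref{eq4.2}, $M=O_b(p^m)/O_b(p^\ell)=p^{m-\ell}$. Condition \eqref{eq4.3} then reads
\[
p^{m-\ell}\ge \frac{\#\cC}{b}\Bigl(\frac{b-2}{b-1}p^{m-\ell}+b-\#\cC\Bigr).
\]
By Lemma \ref{lem4.12} applied with exponent $m-\ell\ge1$ (valid since $p^{m-\ell}\ge p\ge (b-1)(b-\#\cC)$), the right side is at most $\frac{\#\cC}{b}p^{m-\ell}\le p^{m-\ell}$. Hence \eqref{eq4.3} holds, and $p^m$ is not a primitive non spectral eigenvalue. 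Combined with the minimality reduction of the first paragraph, induction on $m$ gives that every $p^m$ is complete.

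The main obstacle is checking that Theorem \ref{theo4.11} is really applied with exponent difference $m-n$ matching the $p^{m-n}$ in \eqref{eq4.3}. The Lemma must then be used with $p^{m-\ell}$ in place of $p^m$, which is where the hypothesis on $p$ (rather than on $p^m$) enters. I would first re-verify the final inequality chain in the proof of Theorem \ref{theo4.11}. It needs $p^n(\frac{b}{\#\cC}M-b+\#\cC)\ge\frac{b-2}{b-1}p^m$, which is exactly \eqref{eq4.3} multiplied by $p^n b/\#\cC$. With that confirmed, the argument closes.
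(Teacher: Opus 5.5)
Your proposal is correct and is essentially the paper's own proof. The paper likewise notes that a non-spectral $p^m$ must have $m>\ell_b(p)$ (incompleteness passes to multiples, and $p^{\ell_b(p)}$ is complete), reduces to a primitively incomplete $p^m$ by minimality, and then combines Theorem~\ref{theo4.11} (with $n=\ell_b(p)$ and $M=p^{m-\ell_b(p)}$) with Lemma~\ref{lem4.12} applied to the exponent $m-\ell_b(p)\ge 1$. The only blemish is your parenthetical justification of $\gcd(p,b)=1$, which is circular as written; coprimality is implicit in the statement anyway, since $O_b(p)$ and $\ell_b(p)$ are only defined for $p\nmid b$.
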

\proof If $p^{m}$ is not a spectral eigenvalue, then $m>\ell_b(p)$. Without loss of generality we
assume that $p^{m}$ is primitive incomplete. Since
\begin{eqnarray*}
  \frac{O_b(p^m)}{O_b(p^{\ell_b(p)})}=p^{m-\ell_b(p)}>\frac{\#\cC}bp^{m-\ell_b(p)},
\end{eqnarray*}
By Lemma \ref{lem4.12} and Theorem \ref{theo4.11},  $p^{m}$ is not primitive incomplete. This
contradiction yields the assertion.\ez

By Theorem \ref{themmlk}, we immediately have the following:
\begin{coro}
If there is a prime $p$ larger than $2$ such that $p\ge (b-1)(b-\#\cC)$ and $p^{\ell_b(p)}$ is a
spectral integer eigenvalue of $(\mu_{b, \D}, \La(b, \cC))$, then Conjecture \ref{conj4.2} holds.
\end{coro}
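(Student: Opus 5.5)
The corollary follows from Theorem \ref{themmlk} once we produce infinitely many pairwise coprime primitively spectral integer eigenvalues. Fix the prime $p>2$ given in the hypothesis, with $p\ge (b-1)(b-\#\cC)$ and $p^{\ell_b(p)}$ a spectral eigenvalue. By Theorem \ref{themmlk}, every power $p^n$, $n\ge 1$, is complete w.r.t. $(b,\cC)$. By Theorem \ref{theobasic} (equivalence of (i) and (ii)), each $p^n$ is then a spectral eigenvalue of $(\mu_{b,\D},\La(b,\cC))$.

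It remains to check that these are primitive in the sense of the definition: $t=p^n$ is a primitively spectral integer eigenvalue if it is spectral and $\gcd(p^n,b)=1$. We need $p\nmid b$. This holds because the definition of $\ell_b(p)$ and Theorem \ref{themmlk} already presuppose $\gcd(p,b)=1$, since $O_b(p)$ is only defined in that case. Alternatively, if $p\mid b$ held, the hypothesis would not make sense. So the family $\{p^n\}_{n\ge1}$ consists of infinitely many distinct primitively spectral integer eigenvalues, which is exactly Conjecture \ref{conj4.2}.

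The argument is short. The only point needing care is the implicit coprimality $\gcd(p,b)=1$. I would state it explicitly, as part of reading the hypothesis "$p^{\ell_b(p)}$ is a spectral integer eigenvalue" together with the standing convention that $O_b$ and $\ell_b$ are only used for arguments coprime to $b$. A second point worth a remark is that the reduction from "spectral eigenvalue" to "complete" via Theorem \ref{theobasic} requires $\gcd(p^n,b)=1$, which the same observation supplies. No counting or Artin-type input is needed beyond what Theorem \ref{themmlk} (via Theorem \ref{theo4.11} and Lemma \ref{lem4.12}) already provides.
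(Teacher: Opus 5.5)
Your proposal is correct and follows the paper's route: the paper states the corollary as immediate from Theorem \ref{themmlk}, which makes every $p^n$ complete, hence (via Theorem \ref{theobasic}, with $\gcd(p,b)=1$) a primitively spectral integer eigenvalue. One small slip: the powers $p^n$ are not pairwise coprime as your opening sentence says, but Conjecture \ref{conj4.2} only asks for infinitely many distinct such eigenvalues, and that is what you actually produce.
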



\begin{thebibliography}{9999}
\bibitem{AH26}
L. X. An and X. G. He, {\it unpublished note}.
\bibitem{artin65}
E. Artin, \textit{Collected Papers}, Reading, MA: Addison-Wesley, 1965.
\bibitem{BBM82}
J. Bellissard, D. Bessis, and P.Moussa, {\it Chaotic states of almost periodic Schr\"odinger
operators},
Phys. Rev. Lett. {\bf 49} (1982), 701--704.


\bibitem {Dai12}
 X. R. Dai, {\it When does a Bernoulli convolution admit a spectrum?},  Adv. Math. {\bf 231} (2012),
 1681--1693.
 \bibitem{Dai16}
 X. R. Dai, {\it Spectra of Cantor measures}, Math. Ann. \textbf{366} (2016), no. 3-4, 1621--1647.

\bibitem {DHL13}
X. R. Dai, X. G. He and C. K. Lai, {\it Spectral property of Cantor measures with consecutive
digits},  Adv. Math. {\bf 242} (2013), 187--208.


\bibitem {DHS09}
D. Dutkay, D. Han, Q. Sun, {\it On the spectra of a Cantor
measure}, Adv. Math. {\bf 221} (2009), 251--276.

\bibitem{DJ06}
 D. Dutkay and P. Jorgensen, {\it Iterated function systems, Ruelle operators, and invariant
 projective measures}, Math. Comp. {\bf 75} (2006), no. 256, 1931--1970.

\bibitem{DHS14}
D. Dutkay, D. G. Han and Q. Y. Sun, {\it Divergence of the mock and scrambled Fourier series on
fractal measures}, Trans. Amer. Math. Soc. {\bf 366} (2014), 2191--2208.

\bibitem{DH16}
D. Dutkay and J. Hausserman, {\it Number theory problems from the harmonic analysis of a fractal.} J.
Number Theory  {\bf 159} (2016), 7--26.

\bibitem{DK18}
D. Dutkay and I. Kraus, {\it Scaling of spectra of Cantor-type measures and some number theoretic
considerations}, Analysis Math.  {\bf 44} (2018), 335--367.

\bibitem{DHL19}
D. Dutkay, J. Haussermann and  C. K. Lai, {\it Hadamard triples generate self-affine spectral
measures.} Trans. Amer. Math. Soc.  {\bf 371}  (2019),  no. 2, 1439--1481.


\bibitem {Fa90}
 K. Falconer, {\it Fractal Geometry, Mathematical Foundations and Applications}, Wiley, New York,
 1990.

\bibitem {Fu74}
B. Fuglede, {\it Commuting self-adjoint partial differential operators and a group theoretic
problem},  J. Funct. Anal. {\bf 16} (1974), 101--121.

\bibitem {FHW18}
Y. S. Fu, X. G. He and Z. X. Wen, {\it Spectra of Bernoulli convolutions and random convolutions},
J. Math. Pures Appl. {\bf 116} (2018), 105--131.

\bibitem  {HLL13}
X. G. He, C. K. Lai and K. S. Lau, {\it Exponential spectra in $L^2(\mu)$}, Appl. Comput. Harmon.
Anal. {\bf34} (2013),  327--338.

\bibitem{HTW19}
X. G. He, M. W. Tang and Z. Y. Wu, {\it Spectral structure and spectral eigienvalve problems of a
class of self-similar spectral measures}, J. Funct. Anal. {\bf 277} (2019), 3688--3722.

\bibitem{IR90}
K. Ireland and M. Rosen, {\it A classical introduction to modern number theory}, 2nd ed., Graduate
Texts in Mathematics, vol.~84, Springer, New York, 1990.
\bibitem{JP98}
P. Jorgenson and S. Pederson, {\it Dense analytic subspaces in fractal $L^2$-spaces},  J. Anal. Math.
{\bf75} (1998), 185--228.
\bibitem {La01}
 I. {\L}aba, {\it Fuglede's conjecture for a union of two intervals}, Proc. Amer. Math. Soc. {\bf129}
 (2001),  2965--2972.

\bibitem  {LW02}
I. {\L}aba and Y. Wang, {\it On spectral Cantor measures}, J. Funct. Anal. {\bf 193} (2002),
409--420.

\bibitem{LW06}
I. {\L}aba and Y. Wang, {\it Some properties of spectral measures},
Appl. Comput. Harmon. Anal. {\bf 20} (2006),  149--157.
\bibitem{Min57}
H. Minkowski, {\it Diophantische Approximationen: Eine Einf\"{u}hrung in die Zahlentheorie},
Chelsea Publishing (New York, 1957).

\bibitem {Str00}
R. Strichartz, {\it Mock Fourier series and transforms associated with certain cantor measures}, J.
Anal. Math. {\bf81} (2000), 209--238.

\bibitem{Str06}
R. Strichartz, {\it Convergence of Mock Fourier series}, J. Anal. Math. {\bf99} (2006), 333--353.
\end{thebibliography}
\end{document}